\documentclass[10pt]{amsart}

\setlength{\textwidth}{14.cm}
\setlength{\textheight}{21.5cm}

\usepackage{geometry,graphicx,amssymb,amsmath,amsbsy,eucal,amsfonts,mathrsfs,amscd,bm}
\usepackage{color}
\usepackage[all]{xy}
\usepackage{tikz-cd}
\usetikzlibrary{arrows,shapes,calc,snakes}
\usetikzlibrary{shapes,snakes}
\usepackage{tikz-3dplot}
\usetikzlibrary{intersections}
\usepackage{tkz-euclide}
\usetikzlibrary{angles}
\usepackage{pgfplots}
\usepackage{eufrak,enumitem}


\geometry{
    letterpaper,
    left=   1.2in,
    right=  1.2in,
    top=    1.25in,
    bottom= 1.25in
}
\linespread{1.}

\numberwithin{equation}{section}

\allowdisplaybreaks[3]

\newtheorem{theorem}{Theorem}[section]
\newtheorem{lemma}[theorem]{Lemma}

\newtheorem{remark}[theorem]{Remark}
\newtheorem{corollary}[theorem]{Corollary}

\theoremstyle{definition}
\newtheorem{definition}[theorem]{Definition}

\newcommand{\mcta}{\mct^\text{\rm a}}
\newcommand{\mctwf}{\mct^\text{\rm wf}}
\newcommand{\Th}{T_h}

\newcommand{\sss}{\ensuremath\mathop{\mathrm{s}}}
\newcommand{\rr}{\ensuremath\mathop{\mathrm{r}}}

\newcommand{\p}{{\partial}}

\newcommand{\mct}{\mathcal{T}_h}

\newcommand{\jump}[1]{\left[\hspace{-0.025in}\left[#1\right]\hspace{-0.025in}\right]}

\newcommand{\curl}{{\ensuremath\mathop{\mathrm{curl}\,}}}
\newcommand{\dive}{{\ensuremath\mathop{\mathrm{div}\,}}}

\newcommand{\jmp}[1]{ [\![ {#1}  ]\!] }

\newcommand{\pol}{\mathcal{P}}

\newcommand{\bbR}{\mathbb{R}}

\newcommand{\VV}{\mathcal{V}}

\newcommand{\WFT}{T^{\ensuremath\mathop{\mathrm{wf}\,}}}
\newcommand{\THWFT}{\mathcal{T}_h^{\ensuremath\mathop{\mathrm{wf}\,}}}
\newcommand{\FCT}{F^{\ensuremath\mathop{\mathrm{ct}\,}}}

\newcommand{\grad}{{\ensuremath\mathop{\mathrm{grad}\,}}}

\newcommand{\TA}{T^{\ensuremath\mathop{\mathrm{a}\,}}}
\newcommand{\RTPi}{\Pi^{\ensuremath\mathop{\mathrm{RT}\,}}}

\newcommand{\WFK}{K^{\ensuremath\mathop{\mathrm{wf}\,}}}

 \newcommand{\mV}{\mathring{V}}
\newcommand{\LL}{\mathsf{L}}
\newcommand{\Fct}{F^{\text{\rm ct}}}
\newcommand{\Ta}{T^{\text{\rm a}}}
\newcommand{\Twf}{T^{\text{\rm wf}}}
\newcommand{\Kwf}{K^{\text{\rm wf}}}
\newcommand{\divFi}{{\rm div}_{F_i}\,}
\newcommand{\divF}{{\rm div}_{F}\,}

\newcommand{\AL}[1]{{\color{black}#1}}
\newcommand{\MJN}[1]{{\color{black}#1}}

\newcommand{\revj}[1]{{\color{black}#1}}
\newcommand{\revjtwo}[1]{{\color{black}#1}}

\newcommand{\tang}{\upsilon}

\begin{document}
\title{Exact sequences on  Worsey-Farin Splits}


%

\author[Johnny Guzm\'an]{Johnny Guzm\'an}
\address{Division of Applied Mathematics, Brown University, Providence, RI 02912 }
\email{johnny\_guzman@brown.edu}           
\thanks{The first author was supported in part by NSF grant DMS--1913083.  The third
author was supported in part by NSF grant DMS--2011733.}

\author[Anna  Lischke]{Anna Lischke}   
\address{Division of Applied Mathematics, Brown University, Providence, RI 02912 }
\email{anna\_lischke@brown.edu} 

\author[Michael Neilan]{Michael Neilan}   
\address{Department of Mathematics, University of Pittsburgh, Pittsburgh, PA 15260}
\email{neilan@pitt.edu}

\maketitle

\begin{abstract}
We construct several smooth finite element spaces
defined on three--dimensional Worsey--Farin splits.
In particular, we construct $C^1$, $H^1(\curl)$, and $H^1$-conforming
finite element spaces and show  the discrete spaces satisfy
local exactness properties.  A feature of the spaces is their low polynomial degree
and lack of extrinsic supersmoothness at sub-simplices of the mesh.
In the lowest order case, the last two spaces in the sequence consist of piecewise linear and piecewise
constant spaces, and are suitable for the discretization
of the (Navier-)Stokes equation.
\end{abstract}

\thispagestyle{empty}
\section{Introduction}

An inherent feature of smooth finite element spaces, with respect
to a general simplicial mesh, is their high polynomial degree and complexity.
For example, $C^1$-conforming finite element spaces necessitate the use
of polynomials of at least degree five and nine in two and three dimensions, respectively \cite{ciarlet2002finite,lai2007spline}.
Another feature of smooth piecewise polynomial spaces is their complexity, as additional
smoothness is imposed on lower-dimensional simplices of the mesh.
 For example, in three dimensions, $C^1$ piecewise polynomials are $C^4$ on vertices and $C^2$ on edges of the mesh \cite{lai2007spline,Zenisek73,Zhang09A}.
 
 Recently, the connection between $C^1$ finite element spaces and stable divergence--free (Stokes) pairs
 for incompressible flows has been emphasized through the use of smooth, discrete de Rham complexes 
 (cf., e.g., \MJN{\cite{FalkNeilan2013,FuGuzman,GuzmanNeilan14R,JohnEtal2017}}).  The relationships between distinct finite element spaces
 imply  many of the attributes of smooth finite element spaces (high polynomial degree and complexity)
 translate to divergence--free pairs.

One way to mitigate the high polynomial degree and complexity of smooth finite element spaces, and analogously 
divergence--free Stokes pairs,
is to define the spaces on certain splits (or refinements) of a simplicial triangulation; the added structure
of the split mesh offers additional flexibility not available on generic meshes.
For example, an Alfeld split of a simplex connects each vertex to its barycenter, thus splitting each $n$-simplex
into $(n+1)$ sub-simplices; this is commonly referred to as a Clough--Tocher split in two-dimensions \cite{ciarlet2002finite}.
The polynomial degree of $C^1$ spaces on Alfeld splits is dramatically 
reduced from five to three in two dimensions, and from nine to five in three dimensions.
These $C^1$ spaces are related to the divergence--free Scott-Vogelius pair for the (Navier-)Stokes problem, where
the velocity space consists of continuous piecewise polynomials and the pressure space consists of discontinuous
polynomials of one degree less \cite{ScottVogelius85}.  On Alfeld splits, the Scott-Vogelius pair is stable if the polynomial degree of the velocity
space is at least the \revj{spatial} dimension \cite{arnold1992quadratic,FuGuzman,guzman2018inf,zhang2005}.

While reducing the polynomial degree, these finite element spaces defined on Alfeld splits still have
supersmoothness at low-dimensional simplices, e.g., the three-dimensional $C^1$ elements on Alfeld splits 
are $C^2$ at vertices \AL{\cite{FuGuzman}}. Moreover, there is still a restriction of polynomial degree for the corresponding Scott--Vogelius pair, which is especially limiting in three dimensions.  These issues motivate the use of other types of splits with more 
facets, in particular, the three-dimensional Worsey--Farin split \cite{WORSEY1988,lai2007spline}.  Similar to the Alfeld split,
the Worsey-Farin split adds a vertex to the interior of each tetrahedron and connects this vertex
to its (four) vertices.  In addition, the Worsey-Farin split adds a vertex to each face of the tetrahedron
and connects this vertex to the vertices of the face and to the interior vertex.    Thus, a 2D Alfeld split is performed
on each face of the tetrahedron and the split produces $12$ sub-tetrahedra (cf.~Section \ref{sec:Prelims} for the precise
construction and definitions).

The goal of this paper is to construct finite element spaces with varying level of smoothness
defined on Worsey--Farin splits in three dimensions.  We connect several local finite element
spaces defined on these splits through the use of a discrete de Rham complex and show
that the sequences are exact for any polynomial degree.  The exactness properties
naturally lead to dimension formulas for the local piecewise polynomial spaces.
\MJN{These dimension formulas appear to be new, even 
for the $C^1$ spaces, and of independent interest.}
We then construct unisolvent sets of degrees of freedom for the spaces which lead 
to the analogous global spaces and commuting projections.   The last two spaces
in the sequences are suitable for the discretization of the (Navier)-Stokes problem.

Features of the proposed finite element spaces are their low polynomial degree and lack
of extrinsic supersmoothness.   The lowest-order $C^1$ finite element space consists
of piecewise cubic polynomials \MJN{with respect to the Worsey-Farin triangulation}, and the accompanying Scott-Vogelius (Stokes) pair consists
of spaces of piecewise linear and  piecewise constant spaces for the velocity and pressure, respectively. 
\MJN{We emphasize that, compared to the analogous spaces defined on Alfeld splits, the polynomial degree is reduced
by two.}
\MJN{In addition}, the degrees of freedom
of the \MJN{proposed} spaces only use derivative information dictated by their global smoothness, and therefore
the spaces do not have added continuity restrictions on lower-dimensional simplices in the mesh.
\MJN{Again, this contrasts with the finite element spaces defined on Alfeld splits.}

One of the characterizations of a Worsey--Farin split is the presence of singular edges, i.e., 
edges that fall in exactly two planes; this is analogous to \MJN{two-dimensional singular vertices, i.e.,
vertices  falling
on exactly two straight lines in a (two-dimensional) triangulation mesh.
It is well known that in two dimensions, the divergence of piecewise smooth vector fields
have a weak continuity property at such points, and this intrinsic smoothness 
characterizes the discrete pressure spaces in a Stokes/NSE finite element discretization \cite{Vogelius83,ScottVogelius85}.
Analogously, we show that}
the derivatives of (continuous) piecewise polynomials
have intrinsic smoothness properties on singular edges.  \MJN{For example}, the divergence operator
acting on the Lagrange finite element space has a alternating weak continuity property
on singular vertices, and this affects the last space in the sequence (the ``pressure space''). 
\MJN{Similar results, but in less generality, are shown in \cite[Lemma 3.1]{ZhangS2011} and \cite[Section 6]{Neilan15MC}. 
For the first time, we also characterize intrinsic smoothness properties of the curl operator
acting on the Lagrange finite element space at singular edges (cf.~Remark \ref{rem:thetaprop}).
}

This paper is a continuation and \MJN{nontrivial} extension of \cite{GuzmanLischkeNeilan}, where
smooth piecewise polynomial spaces are built on  two-dimensional Powell--Sabin meshes.
The present work also has similarities with the recent work by Christiansen
and Hu \cite{christiansen2018generalized}, where low--order finite element de Rham complex are constructed
on several different meshes (splits).  However, unlike this work, 
we build all of our finite element spaces on a Worsey--Farin split and for general polynomial degree.
\MJN{One of the main differences in the construction and the analysis
between the current work and those given in \cite{christiansen2018generalized,FuGuzman,GuzmanLischkeNeilan}
is identifying weak continuity properties for both the divergence and curl operator
at at singular edges.  This necessitates the construction of spaces (both locally and globally)
with additional smoothness of the  N\'ed\'elec spaces, yet are not globally continuous; cf. \eqref{eqn:3dVV2def}--\eqref{eqn:3dVV3odef}.}
Stokes pairs defined on Worsey--Farin splits have also been analyzed in \cite{ZhangS2011} using 
the quadratic Lagrange space for the velocity.  However, the pressure space in \cite{ZhangS2011}
was not explicitly characterized as we do in the current paper.

The rest of the paper is organized as follows.  In the next section,
we provide the notation and definitions used throughout the paper.
In Section \ref{sec:LocalSeq} we show that  local smooth finite element
spaces satisfy exactness properties with respect to several de Rham complex.
This is proved, in part, by using the exactness properties of piecewise polynomials
defined on two-dimensional Clough-Tocher splits.  These exactness properties
naturally lead to dimension formulas for the local spaces, which we state in Section 
\ref{sec-dim}. 
Next, we give unisolvent sets of degrees of freedom (DOFs) for each space
in Section \ref{sec-DOFs} and show that the DOFs induce commuting projections.
Finally, in Section \ref{sec-Global}, we prove that the DOFs lead
to global (conforming) finite element spaces.

This paper is based on the second author's Ph.D. thesis \cite{ALphd}.

\section{Preliminaries}\label{sec:Prelims}
Let $\Omega \subset \mathbb{R}^3$ be a contractible polyhedral domain.  We assume we have a shape-regular simplicial triangulation $\mct$ of $\Omega$. For each $T \in \mct$ we let $z_T$  denote its incenter\MJN{, that is,} the center of the largest inscribed ball contained in $T$. Let $F=\overline{T_1} \cap \overline{T_2}$ be an interior face with $T_1, T_2 \in \mct$. Let $L$ be the line segment connecting $z_{T_1}$ and $z_{T_2}$; then we let $m_F=L \cap F$. Since we chose $z_{T}$ to be the incenters, we can guarantee \AL{that} $m_F$ exists. If $F$  is a boundary face of $\mct$, then we let $m_F$ be the barycenter of $F$. \revj{For a simplex $K$,   
$\Delta_s(K)$ will denote the \MJN{set of} $s$-subsimplices \MJN{(i.e., the $s$-dimensional subsimplices)} of $K$. More generally, if $\mathcal{F}_h$ is a collection of simplices, then $\Delta_s(\mathcal{F}_h)$ denotes the collection of $s$-subsimplices of all the simplices in $\mathcal{F}_h$. Moreover, if $\mathcal{F}_h$ is a simplicial triangulation of a domain with boundary, then $\Delta_s^I(\mathcal{F}_h)$ denotes the collection of $s$-subsimplices of $\Delta_s(\mathcal{F}_h)$ that do not belong to the boundary of the domain.} 

For each $T \in \mct$ with $T=[x_0, \ldots, x_3]$, we let $\Ta=\{K_i, 0 \le i \le 3\}$ \MJN{with} $K_i=[z_T, x_0, \ldots, \widehat{x_i}, \ldots, x_3]$.  \revj{ Here and throughout $\widehat{\cdot}$ represents omission of the term}. In other words,  we see that $\Ta$ is a triangulation of $T$ with four simplices and this is known as the Alfeld split of $T$.  Let $F_i= [x_0, \ldots, \widehat{x_i}, \ldots, x_3]$ be the $i$-th face of $T$ \MJN{so that} $K_i   \in \Ta$ with $F_i \subset K_i$. Then we let $\Kwf_i=\{ S_j, 0 \le j \le 3, j \neq i \}$ where $S_j=[z_T, m_{F_i},   x_{k}, x_{\ell}]$ and $0 \le k, \ell \le 3$ with  $k, \ell \notin \{i, j\}$.  We let $\Twf=\{ S \in \Kwf_i:  0 \le i \le 3\}$; cf.~Figure \ref{fig:WF}.   We see that $\Twf$ consists of $12$ simplices and is known as the Worsey-Farin split of $T$. We let  $\mcta:=\{ K \in \Ta: T \in \mct\}$ and $\mctwf:=\{ K \in \Twf: T \in \mct\}$. We see that $\mcta$ is a refinement of $\mct$ and $\mctwf$ is a refinement of $\mcta$.  \revj{For $T \in \mct$, $\mu$ denotes the hat function corresponding to $z_T$ defined  on the Alfeld split  $\Ta$ . That is, $\mu$ is the piecewise linear function with respect to $\Ta$ such that $\mu(z_T)=1$ and $\mu=0$ on $\partial T$. Moreover, we use the notation $\mu_i=\mu|_{K_i}$}.

For any $F \in \Delta_2(T)$ for $T \in \mct$ we see that $\Twf$ induces a Clough-Tocher triangulation of $F$, which we denote by $\Fct$. To be precise, let $F=[y_0, y_1, y_2]$, then $\Fct:=\{ [m_F, y_0, y_1],  [m_F, y_0, y_2],  [m_F, y_1, y_2] \}$. We will utilize surface differential operators. Let $\psi$ be a smooth enough vector valued function on $T \in \mct$  and let $F \in \Delta_2(T)$. Then we let the tangential part of $\psi$  be given by $\psi_F:=n \times \psi \times n |_F$ \revj{ where $n$ is the outward pointing normal of $\partial T$}. For a scalar valued function $u$,  we define $u_F:= u|_F$. We will use the following identities
\begin{alignat}{2}
{\rm curl}_F\, \psi_F  =& \curl \psi \cdot n \qquad && \text{ on } F,  \label{curlid}\\
 {\rm div}_F\,  \psi_F =& \dive (n \times \psi \times n)  \qquad && \text{ on } F,  \label{divid} \\
 {\rm grad}_F\, u_F =& n \times \grad u \times n  \qquad && \text{ on } F \label{gradid},\\
\MJN{ {\rm rot}_F\,u_F =} & \MJN{\grad u \times n \qquad }&&\MJN{\text{on } F.}
\end{alignat}

\begin{figure}\centering
\includegraphics[width=.5\textwidth]{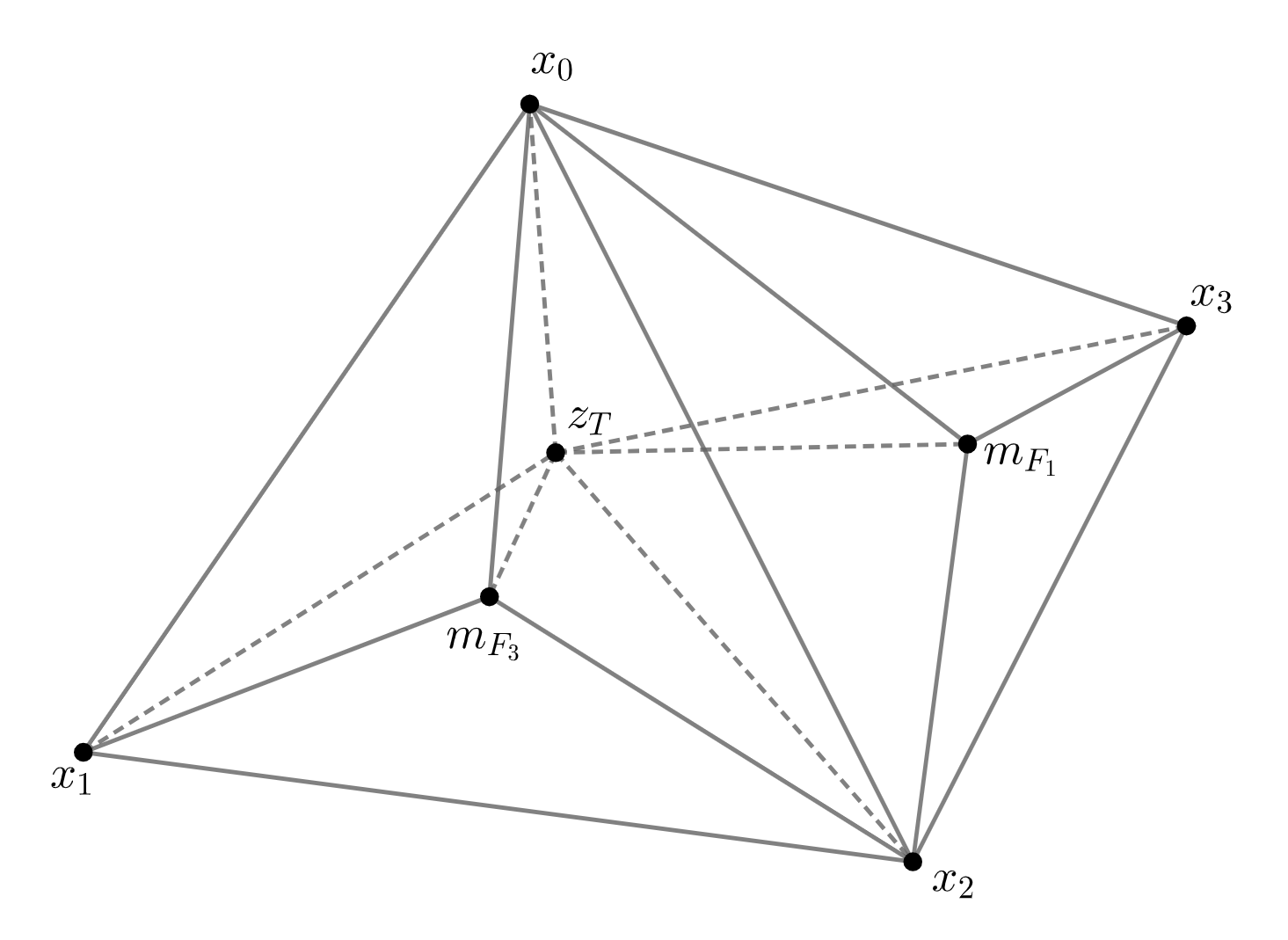}
\caption{\label{fig:WF}A representation of the Worsey-Farin split with two faces shown.}
\end{figure}

We now define local finite element spaces on each macro-tetrahedron $T \in \mct$. To do this, we assume we have a triangulation of $T$, $\Th$, which could be, for example, $\Th=\Ta$ or $\Th=\Twf$. We let $\pol_r(S)$ be the space of polynomials of degree  less than or equal to $r$ defined on $S$. \MJN{For negative values of $r$, $\pol_r(S)$ is the trivial set.} We define on the triangulation $\Th$ of $T$ the space of discontinuous polynomials:
\begin{equation*}
\pol_r(\Th):=\{ v \in L^2(T): v|_K \in \pol_r(K), \forall K \in \Th\}. 
\end{equation*}

The spaces of minimal smoothness are defined as follows.
\begin{alignat*}{2}
V_r^0(\Th):=& \pol_r(\Th) \cap H^1(T), \quad && \mV_{\revj{r}}^0(\Th):=\mathring{H}^1(T) \cap V_{\revj{r}}^0(\Th), \\
V_r^1(\Th):=&  [\pol_r(\Th)]^3 \cap  H({\rm curl}; T), \quad && \mV_{\revj{r}}^1(\Th):=\mathring{H}({\rm curl},T) \cap V_{\revj{r}}^1(\Th), \\
V_r^2(\Th):=&  [\pol_r(\Th)]^3 \cap  H({\rm div}; T), \quad && \mV_{\revj{r}}^2(\Th):=\mathring{H}({\rm div},T) \cap V_{\revj{r}}^2(\Th), \\
V_r^3(\Th):=&\pol_r(\Th) , \quad && \mV_{\revj{r}}^2(\Th):=L_0^2(T) \cap V_{\revj{r}}^2(\Th),
\end{alignat*}
where $L_0^2(T) = \{q \in L^2(T) : \int_T q\MJN{\,dx} = 0\}$. \revj{Here,  we use the commonly used notation that $\mathring{(\cdot)}$ denotes the corresponding space with vanishing traces}\MJN{, e.g., 
$\mathring{H}({\rm curl},T) = \{v\in H({\rm curl},T):\ v\times n|_{\p T} = 0\}$
and $\mathring{H}({\rm div},T) = \{v\in H({\rm div},T):\ v\cdot n|_{\p T} = 0\}$.}
We also consider the Lagrange finite elements $\LL_r^0(\Th):= V_r^0(\Th)$ \big($ \mathring{\LL}_r^0(\Th):= \mV^0(\Th)$\big) , $\LL_r^i(\Th):=[\LL_r^0(\Th)]^3$  \big($ \mathring{\LL}_r^{\revj{i}}(\Th):=[\mathring{\LL}_r^0(\Th)]^3$\big)  for $i=1,2$ and finally $\LL_r^3(\Th):=V_r^0(\Th)$  \big($ \mathring{\LL}_r^3(\Th):= L_0^2(T) \cap \revj{\mathring{\LL}}_r^0(\Th)$\big). Finally, we define the smoother finite element \MJN{spaces}
\begin{alignat*}{2}
S_r^0(\Th):=&\{ v \in \LL_r^0(\Th) : \grad v \in \LL_{r-1}^{1}(\Th) \},  \quad && \mathring{S}_r^0(\Th):=\{ v \in \mathring{\LL}_r^0(\Th) : \grad v \in \mathring{\LL}_{r-1}^{1}(\Th) \}, \\
S_r^1(\Th):=&\{ v \in \LL_r^1(\Th) : \curl v \in \LL_{r-1}^{2}(\Th) \},  \quad && \mathring{S}_r^1(\Th):=\{ v \in \mathring{\LL}_r^1(\Th) : \curl v \in \mathring{\LL}_{r-1}^{2}(\Th) \}, \\
S_r^2(\Th):=&\{ v \in \LL_r^2(\Th) : \dive v \in \LL_{r-1}^{3}(\Th) \},  \quad && \mathring{S}_r^2(\Th):=\{ v \in \mathring{\LL}_r^2(\Th) : \dive  v \in \mathring{\LL}_{r-1}^{3}(\Th) \}, \\
S_r^3(\Th):=&\LL_r^3(\Th),   \quad && \mathring{S}_r^3(\Th):=\mathring{\LL}_r^3(\Th).
\end{alignat*}

We also define the \MJN{intermediate}  spaces that add extra smoothness to the spaces $V_r^i(\Twf)$ on the faces of $T$.
\begin{subequations}
\begin{align}
	\VV_r^2(\Twf) &= \{v \in V_r^2(\Twf) : v \times n \text{ is continuous on each } F \in \Delta_2(T) \}, \label{eqn:3dVV2def}\\
	\mathring{\VV}_r^2(\Twf) &= \{ v \in \VV_r^2(\Twf) : v \cdot n = 0 \text{ on each } F \in \Delta_2(T)\}, \label{eqn:3dVV2odef}\\
	\VV_r^3(\Twf) &= \{q \in V_r^3(\Twf) : q \text{ is continuous on each } F \in \Delta_2(T)\}, \label{eqn:3dVV3def}\\
	\mathring{\VV}_r^3(\Twf) &= \VV_r^3(\Twf) \cap L_0^2(T). \label{eqn:3dVV3odef}
\end{align}
\end{subequations}
We see that  $\LL_r^i(\Twf) \subset \VV_r^i(\Twf) \subset V_r^i(\Twf) $ for $i=2,3$.

\section{Local Exact Sequences}\label{sec:LocalSeq}
\MJN{One of the main results of the paper} is to prove local sequences \MJN{consisting
of smooth piecewise polynomials} are exact. The first sequences are the ones with homogeneous boundary conditions.
\begin{subequations}\label{WFbdryseqs}
\begin{alignat}{5}
&0\
{\xrightarrow{\hspace*{0.5cm}}}\
\mathring{V}_r^0(\Twf)\
&&\stackrel{\grad}{\xrightarrow{\hspace*{0.5cm}}}\
\mathring{V}_{r-1}^1(\Twf)\
&&\stackrel{\curl}{\xrightarrow{\hspace*{0.5cm}}}\
\mathring{V}_{r-2}^2(\Twf)\
&&\stackrel{\dive}{\xrightarrow{\hspace*{0.5cm}}}\
\mathring{{V}}_{r-3}^3(\Twf) 
&&\xrightarrow{\hspace*{0.5cm}}\
 0,\label{WFbdryseq1}\\
&0\
{\xrightarrow{\hspace*{0.5cm}}}\
\mathring{S}_r^0(\Twf)\
&&\stackrel{\grad}{\xrightarrow{\hspace*{0.5cm}}}\
\mathring{{\LL}}_{r-1}^1(\Twf)\
&&\stackrel{\curl}{\xrightarrow{\hspace*{0.5cm}}}\
\mathring{{\VV}}_{r-2}^2(\Twf)\
&&\stackrel{\dive}{\xrightarrow{\hspace*{0.5cm}}}\
\mathring{{V}}_{r-3}^3(\Twf)\
&&\xrightarrow{\hspace*{0.5cm}}\
 0,\label{WFbdryseq2}\\
 &0\
{\xrightarrow{\hspace*{0.5cm}}}\
\mathring{S}_{r}^0(\Twf)\
&&\stackrel{\grad}{\xrightarrow{\hspace*{0.5cm}}}\
\mathring{S}_{r-1}^1(\Twf)\
&&\stackrel{\curl}{\xrightarrow{\hspace*{0.5cm}}}\
\mathring{\LL}_{r-2}^2(\Twf)\
&&\stackrel{\dive}{\xrightarrow{\hspace*{0.5cm}}}\
\mathring{\VV}_{r-3}^3(\Twf)\
&&\xrightarrow{\hspace*{0.5cm}}\
 0,\label{WFbdryseq3}\\
  &0\
{\xrightarrow{\hspace*{0.5cm}}}\
\mathring{S}_{r}^0(\Twf)\
&&\stackrel{\grad}{\xrightarrow{\hspace*{0.5cm}}}\
\mathring{S}_{r-1}^1(\Twf)\
&&\stackrel{\curl}{\xrightarrow{\hspace*{0.5cm}}}\
\mathring{S}_{r-2}^2(\Twf)\
&&\stackrel{\dive}{\xrightarrow{\hspace*{0.5cm}}}\
\mathring{\LL}_{r-3}^3(\Twf)\
&&\xrightarrow{\hspace*{0.5cm}}\
 0. \label{WFbdryseq4}
\end{alignat}
\end{subequations}

The second set of sequences does not have boundary conditions. 
 \begin{subequations}\label{WFseqs}
 \begin{alignat}{5}
&\mathbb{R}\
\rightarrow\
{{V}}_{r}^0(\Twf)\
&&\stackrel{\grad}{\xrightarrow{\hspace*{0.5cm}}}\
{{V}}_{r-1}^1(\Twf)\
&&\stackrel{\curl}{\xrightarrow{\hspace*{0.5cm}}}\
{{V}}_{r-2}^2(\Twf)\
&&\stackrel{\dive}{\xrightarrow{\hspace*{0.5cm}}}\
{{V}}_{r-3}^3(\Twf)\
%
&&\rightarrow \
 0,\label{seq1}\\
 &\mathbb{R}\
\rightarrow \
{S}_{r}^0(\Twf)\
&&\stackrel{\grad}{\xrightarrow{\hspace*{0.5cm}}}\
{\LL}_{r-1}^1(\Twf)\
&&\stackrel{\curl}{\xrightarrow{\hspace*{0.5cm}}}\
{V}_{r-2}^2(\Twf)\
&&\stackrel{\dive}{\xrightarrow{\hspace*{0.5cm}}}\
{V}_{r-3}^3(\Twf)\
%
&&\rightarrow \
 0,\label{seq2}\\
  &\mathbb{R}\
\rightarrow\
{S}_{r}^0(\Twf)\
&&\stackrel{\grad}{\xrightarrow{\hspace*{0.5cm}}}\
{S}_{r-1}^1(\Twf)\
&&\stackrel{\curl}{\xrightarrow{\hspace*{0.5cm}}}\
{\LL}_{r-2}^2(\Twf)\
&&\stackrel{\dive}{\xrightarrow{\hspace*{0.5cm}}}\
{V}_{r-3}^3(\Twf)\
%
&&\rightarrow \
 0, \label{seq3}\\
  &\mathbb{R}\
\rightarrow\
{S}_{r}^0(\Twf)\
&&\stackrel{\grad}{\xrightarrow{\hspace*{0.5cm}}}\
{S}_{r-1}^1(\Twf)\
&&\stackrel{\curl}{\xrightarrow{\hspace*{0.5cm}}}\
{S}_{r-2}^2(\Twf)\
&&\stackrel{\dive}{\xrightarrow{\hspace*{0.5cm}}}\
{\LL}_{r-3}^3(\Twf)\
%
&& \rightarrow \
 0. \label{seq4}
 \end{alignat}
 \end{subequations}
 
The first sequences \eqref{WFbdryseq1} and \eqref{seq1} are exact \MJN{due to the results of} N\'ed\'elec \cite{Nedelec86}. The major result of this section are the following theorems. 
\begin{theorem}\label{bdryseqs}
Let $r \geq 3$. Then the sequences \eqref{WFbdryseqs} are exact. 
\end{theorem}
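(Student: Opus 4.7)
The first sequence \eqref{WFbdryseq1} is the classical Nédélec exact sequence, which we take as given. My plan is to derive exactness of \eqref{WFbdryseq2}--\eqref{WFbdryseq4} from \eqref{WFbdryseq1} by an inductive diagram chase that progressively upgrades functions into the smoother subspaces $\mathring{\LL}$, $\mathring{\VV}$, $\mathring{S}$. Two local tools are essential: (a) the exactness of the two-dimensional Clough-Tocher de~Rham complex on each macro-face $F \in \Delta_2(T)$ (known from prior work), and (b) the intrinsic smoothness of $\curl$ and $\dive$ on Lagrange piecewise polynomials at the singular edges $[z_T, m_F]$ of $\Twf$, induced by the coplanarity of the two interior $\Twf$-faces meeting at such an edge.

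I would proceed slot by slot. Verifying that each differential has the stated codomain ($\curl(\mathring{\LL}_{r-1}^1) \subseteq \mathring{\VV}_{r-2}^2$, $\dive(\mathring{\LL}_{r-2}^2) \subseteq \mathring{\VV}_{r-3}^3$, and $\dive(\mathring{S}_{r-2}^2) \subseteq \mathring{\LL}_{r-3}^3$) uses tool~(b), in which the normal-derivative pieces that could spoil continuity across a CT sub-edge of a macro-face combine, on either side of a singular edge, into a quantity that is already continuous. Exactness at the 1-form slot is immediate from \eqref{WFbdryseq1}: if $\curl \phi = 0$ with $\phi$ in the 1-form space of sequence $k$, then $\phi = \grad v$ for some $v \in \mathring{V}_r^0$, and the hypothesis $\grad v = \phi \in \mathring{\LL}_{r-1}^1$ (or $\mathring{S}_{r-1}^1$) upgrades $v$ to $\mathring{S}_r^0$ by the very definition of that space.

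For exactness at the 2-form slot, given $\psi$ with $\dive \psi = 0$ in the relevant 2-form space, \eqref{WFbdryseq1} delivers $\phi \in \mathring{V}_{r-1}^1$ with $\curl \phi = \psi$. Since $\phi \in H(\curl)$, every jump of $\phi$ across an interior face of $\Twf$ is purely normal, $[\phi] = ([\phi]\cdot n)\,n$, and I would construct $w \in \mathring{S}_r^0$ so that the prescribed normal-derivative jumps $[\partial_n w]$ of $\grad w$ cancel these and place $\phi + \grad w$ in the target 1-form space. Within each Alfeld sub-simplex $K_i \in \TA$, this reduces face by face to a two-dimensional Clough-Tocher lifting on $F_i$, whose compatibility data at the singular edge $[z_T, m_{F_i}]$ is supplied precisely by the extra smoothness assumed on $\psi$. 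For surjectivity of $\dive$ at the end of each sequence, I would bootstrap from the previous sequence: use the already-established surjectivity to produce $\psi_0$, then add a $\curl$-correction $\curl\eta$, again constructed via CT lifting on each macro-face, that removes the remaining tangential and normal jumps and lands $\psi_0 + \curl\eta$ in the smoother 2-form space without changing the divergence.

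The hardest part will be the simultaneous bookkeeping of jumps of the correction potentials $w$ and $\eta$ across both the interior faces of $\Twf$ and the CT sub-edges of $\partial T$, and verifying that the compatibility conditions imposed by the CT lifting problems at each singular edge $[z_T, m_F]$ are automatically met by the assumed smoothness of the input. These compatibility conditions are precisely what force the hypothesis $r \ge 3$, and packaging them uniformly across \eqref{WFbdryseq2}--\eqref{WFbdryseq4} is the crux of the argument.
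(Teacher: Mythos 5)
Your reduction of the problem is the right one (accept \eqref{WFbdryseq1}, check the complex property via the weak continuity of $\curl$ and $\dive$ at the singular edges, dispose of the $1$-form slot by noting that $\grad v\in\mathring{\LL}^1_{r-1}$ forces $v\in\mathring S^0_r$ by definition), and up to that point you match the paper, which routes everything through Theorems \ref{divthm}, \ref{curlthm} and \ref{gradthm}. But your treatment of the two hard slots has a genuine gap. For exactness at the $2$-form slot you propose to take the rough potential $\phi\in\mathring V^1_{r-1}$ supplied by \eqref{WFbdryseq1} and then find $w$ so that $\phi+\grad w$ is continuous; this requires producing a continuous piecewise polynomial $w$ whose normal-derivative jumps across \emph{every} interior face of $\Twf$ equal the prescribed data $-\jmp{\phi}\cdot n$, subject to cocycle-type compatibility conditions around the interior edges emanating from $z_T$ and $m_F$ and at the vertex $z_T$ itself. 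You do not verify these conditions or construct $w$, and the data depend on the particular (non-unique) $\phi$, not only on $\psi$; solvability of this prescribed-jump problem is essentially equivalent to the existence of a continuous potential, i.e., to the statement being proved. The same objection applies to your ``add a $\curl\eta$ correction'' step for surjectivity of $\dive$. This is not a face-by-face two-dimensional Clough--Tocher lifting: the jumps to be cancelled live on the interior faces of $\Twf$, which is an irreducibly three-dimensional configuration. The paper avoids this entirely by constructing the potential directly as $\mu w_r+\mu^2 w_{r-1}+\cdots+\mu^{r+1}w_0$, where each $w_j$ is obtained from N\'ed\'elec degrees of freedom, face extensions, and the two-dimensional Clough--Tocher exactness (Lemmas \ref{lemma1}--\ref{lemma4} for $\dive$, Lemmas \ref{lem:vwg713}--\ref{lemmaextra} for $\curl$), so that the remainder gains a factor of $\mu$ at every stage and no global jump-compatibility analysis is ever needed.

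A secondary error: the hypothesis $r\ge 3$ is not ``forced by compatibility conditions of the CT lifting.'' The underlying surjectivity results (Theorems \ref{divthm}--\ref{gradthm}) hold for all degrees; $r\ge 3$ merely ensures that the polynomial degrees $r-1,r-2,r-3$ appearing in \eqref{WFbdryseqs} are non-negative so that all four spaces in each sequence are meaningful.
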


\begin{proof}
Again, we already know that \eqref{WFbdryseq1} is exact. The exactness of the rest of the sequences follow from the following results that are found below: Theorem \ref{divthm},  Theorem \ref{curlthm}, and Theorem \ref{gradthm}.
\end{proof}

Similarly, we can prove the following theorem. 
\begin{theorem}\label{seqs}
Let $r \geq 3$. Then the sequences \eqref{WFseqs} are exact. 
\end{theorem}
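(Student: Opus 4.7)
The plan is to mirror the strategy used for Theorem \ref{bdryseqs}: the first sequence \eqref{seq1} is already exact by N\'ed\'elec \cite{Nedelec86}, so what remains is to establish exactness of \eqref{seq2}, \eqref{seq3}, and \eqref{seq4}. As in the boundary-condition case, I would decompose the argument into three component statements -- non-homogeneous analogues of Theorem \ref{divthm}, Theorem \ref{curlthm}, and Theorem \ref{gradthm} -- each addressing one class of interior position. Exactness at the left end of every sequence is essentially free: the starting space ($V_r^0(\Twf)$ or $S_r^0(\Twf)$) consists of continuous piecewise polynomials on the connected tetrahedron $T$, so its gradient kernel is exactly $\mathbb{R}$.

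For each interior position I would use the following two-step correction scheme. To illustrate, suppose $v\in S_{r-2}^2(\Twf)$ satisfies $\dive v = 0$ and the goal is to produce $w\in S_{r-1}^1(\Twf)$ with $\curl w = v$. By exactness of \eqref{seq1}, there exists $\tilde w\in V_{r-1}^1(\Twf)$ with $\curl \tilde w = v$. One then corrects $\tilde w$ by subtracting a gradient $\grad\phi$, with $\phi$ chosen so that $\tilde w-\grad\phi$ picks up the missing extrinsic smoothness (tangential continuity on faces, polynomial smoothness of $\curl$, etc.). The correcting $\phi$ is furnished by the already-established sequences of Theorem \ref{bdryseqs}: the homogeneous-boundary versions \eqref{WFbdryseqs} supply all the corrections whose traces on $\p T$ vanish, which is precisely what is needed to preserve whatever trace properties $\tilde w$ already possesses. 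An entirely parallel strategy handles exactness of \eqref{seq2} at $\LL_{r-1}^1(\Twf)$ and exactness of \eqref{seq4} at $S_{r-1}^1(\Twf)$.

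For the right end, surjectivity of the final $\dive$ must be verified onto the indicated target. In \eqref{seq2} this is inherited directly from \eqref{seq1}; in \eqref{seq3} one additionally verifies that a preimage can be chosen inside $\LL_{r-2}^2(\Twf)$; in \eqref{seq4} one must produce a preimage inside $S_{r-2}^2(\Twf)$ of any $q\in\LL_{r-3}^3(\Twf)$. In each case the strategy is the same: start with a preimage in $V_{r-2}^2(\Twf)$ supplied by \eqref{seq1} and correct it within $\ker(\dive)$ by a curl from the appropriate smoother N\'ed\'elec-type space, the existence of which comes from Theorem \ref{bdryseqs}.

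The chief obstacle, precisely as in Theorem \ref{bdryseqs}, is controlling the weak continuity of $\dive$ and $\curl$ at the singular edges of the Worsey--Farin split $\Twf$ that were emphasized in the introduction (cf.\ Remark \ref{rem:thetaprop}). This intrinsic smoothness is exactly what distinguishes $S_r^i(\Twf)$, $\LL_r^i(\Twf)$, and $\VV_r^i(\Twf)$ from the plain $V_r^i(\Twf)$ spaces, so every correction step must respect it; this is the point at which the most delicate accounting is needed, and it is the reason each of the three component theorems has to be established separately rather than being read off from \eqref{seq1} alone.
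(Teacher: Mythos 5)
Your high-level skeleton matches the paper's: \eqref{seq1} is exact by N\'ed\'elec, the left end is trivial, and the remaining exactness claims are delegated to non-homogeneous analogues of Theorems \ref{divthm}, \ref{curlthm} and \ref{gradthm} (which is literally what the paper's one-line proof of Theorem \ref{seqs} does, citing parts (ii)--(iv) of those theorems). But the mechanism you propose for proving those analogues has a genuine gap. You take an arbitrary preimage $\tilde w\in V_{r-1}^1(\Twf)$ from \eqref{seq1} and claim to correct it by subtracting $\grad\phi$ with $\phi$ ``furnished by'' the homogeneous sequences \eqref{WFbdryseqs}. This does not work as stated, for two reasons. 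First, the defect of $\tilde w$ is not a boundary-trace defect: $\tilde w$ is already tangentially continuous, and what is missing is \emph{normal} continuity across the interior faces of $\Twf$; the correcting $\phi$ must therefore have prescribed normal-derivative jumps across those interior faces, and in general it does \emph{not} have vanishing trace on $\p T$, so $\mathring{S}_r^0(\Twf)$ and its relatives cannot supply it. Second, and more fundamentally, Theorem \ref{bdryseqs} only asserts surjectivity of $\grad$, $\curl$, $\dive$ between specific pairs of spaces with vanishing traces; it says nothing about whether a given curl-free (resp.\ divergence-free) field with prescribed interior jumps is a gradient (resp.\ curl) of a member of the smoother space. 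Asserting the existence of your correcting $\phi$ is essentially equivalent to the exactness you are trying to prove, so the argument is circular.

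The paper's actual route is different in exactly this respect: it never corrects an arbitrary preimage. Instead (see the proofs of Theorem \ref{divthm}(ii)--(iii) and Theorem \ref{curlthm}(ii)) it \emph{constructs} a specific partial preimage $\psi$ whose image matches the target on $\p T$ --- using the two-dimensional Clough--Tocher exactness of Theorem \ref{2dseqs}/Corollary \ref{cor:rotdiv} on each induced face split $\Fct$, the extension Lemma \ref{lemmaaux861}, and (for the curl) the lowest-order Raviart--Thomas projection --- so that the \emph{remainder} $v-\curl\psi$ (resp.\ $p-\dive\psi$) lands in the homogeneous space, to which the boundary-condition result applies. That homogeneous result is itself proved by the iterative $\mu$-power construction (Lemmas \ref{lemma1}--\ref{lemma4}, \ref{lem:vwg713}--\ref{lemmaextra}), which is where the singular-edge/face-split geometry you correctly flag as the chief obstacle is actually handled. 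None of this machinery appears in your outline, and without it the ``two-step correction scheme'' cannot be completed.
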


\MJN{The proofs of Theorems \ref{bdryseqs}--\ref{seqs}
follow the same general procedure as \cite{guzman2018inf,FuGuzman,GuzmanLischkeNeilan}.
Essentially, we build functions of the form $\mu w_1+\mu^2 w_2+\cdots$
through an iterative procedure,
where the functions $w_j$ are specified
such that the results
of Theorems \ref{bdryseqs}--\ref{seqs} are inferred.  However,
due to geometric properties of the Worsey-Farin split,
there are non-trivial differences between the arguments given in
\cite{guzman2018inf,FuGuzman,GuzmanLischkeNeilan} and those presented here.
Essentially this is due to the induced Clough-Tocher
triangulation on each face $F\subset \p T$, which from
to the definition of $\mu$,
 implies that the functions $w_j$ 
 are piecewise polynomials with respect to the Worsey-Farin split;
 in contrast, in \cite{guzman2018inf,FuGuzman}, the functions $w_j$
 are simply polynomials on $T$ defined by canonical N\'ed\'elec degrees of freedom.
As such, the exactness
of polynomial sequences defined on (two-dimensional) Clough-Tocher triangulations
plays an {essential} role in the proofs of  Theorems \ref{bdryseqs}--\ref{seqs}.
}

\subsection{Exact sequences on Clough--Tocher splits}
Before we prove Theorems \ref{bdryseqs} and  \ref{seqs}, 
we will need to use exact sequence properties of local Clough-Tocher splits. 
\MJN{To start, we require some definitions.
\begin{definition}\label{def:nst}

For a tetrahedron $T\in \mathcal{T}_h$ and face $F\in \Delta_2(T)$,
we denote by $n_F:=n|_F$ the outward unit normal of $\p T$ restricted to $F$.
We let $\tau$ and $\tang$ be orthonormal 
vectors that span the tangent space of $F$.
Thus, $\{\tau,\tang,n_F\}$ is an orthonormal system of $\bbR^3$.
%
%
\end{definition}
}

\revj{
\begin{remark} If $P(F)$ is a scalar valued space defined on $F$ then, as commonly done, and with an abuse of notation,  we set $[P(F)]^2=\{ a t+ b \tang: a, b \in P(F)\}$.
\end{remark}
}

We define the  \revj{N\'ed\'elec} spaces on the Clough--Tocher split:
\begin{alignat*}{3}
&V_{{\rm div}, r}^1(\Fct):= \{ v \in H({\rm div}_F, F): v|_Q \in [\pol_r(F)]^2, \forall Q \in \Fct \},\quad && \mathring{V}^1_{{\rm div},r}(\Fct) = V_{{\rm div}, r}^1(\Fct)\cap 
\mathring{H}({\rm div}_F, F)\\
&V_{{\rm curl}, r}^1(\Fct):= \{ v \in H({\rm curl}_F;,F): v|_Q \in [\pol_r(F)]^2, \forall Q \in \Fct \},\quad && \mathring{V}^1_{{\rm curl},r}(\Fct) = V_{\rm curl, r}^1(\Fct)\cap 
\mathring{H}({\rm curl}_F, F),\\
&{V}_r^2(\Fct):=\{ v \in L^2(F): v|_Q \in \pol_r(F), \forall Q \in \Fct\},\quad &&\mathring{V}_r^2(\Fct):=V_r^2(\Fct)\cap  L^2_0(F),
\end{alignat*}
and the Lagrange spaces,
\begin{alignat*}{3}
&\LL_r^0(\Fct) := V_r^2(\Fct)\cap H({\rm grad}_F;F),\quad && \mathring{\LL}_r^0(\Fct) := \LL_r^0(\Fct)\cap \mathring{H}({\rm grad}_F;F),\\
&\LL_r^1(\Fct) := [\LL_r^0(\Fct)]^2,\quad && \mathring{\LL}_r^1(\Fct) := [\mathring{\LL}_r^0(\Fct)]^2,\\
&\LL_r^2(\Fct) := \LL_r^0(\Fct),\quad && \mathring{\LL}_r^2(\Fct) := \mathring{\LL}_r^0(\Fct)\cap L^2_0(F).
\end{alignat*}
Finally, we define the subspaces with additional smoothness.
\begin{alignat*}{2}
&S_r^0(\Fct) :=\{v\in \LL_r^0(\Fct):\ {\rm grad}_F\,v \in \revj{\LL_{r-1}^1(\Fct)} \},\quad &&\mathring{S}_r^0(\Fct) :=\{v\in \mathring{\LL}_r^0(\Fct):\ {\rm grad}_F\,v \in \revj{\mathring{\LL}_{r-1}^1(\Fct)} \},\\
&S_{{\rm div},r}^1(\Fct)  :=\{v\in \LL_r^1(\Fct):\ {\rm div}_F\,v\in \revj{\LL_{r-1}^2(\Fct)} \},\quad &&\mathring{S}_{{\rm div},r}^1(\Fct) :=\{v\in \mathring{\LL}_r^1(\Fct):\ {\rm div}_F\,v\in \revj{\mathring{\LL}_{r-1}^2(\Fct)} \},\\
&S_{{\rm curl},r}^1(\Fct) :=\{v\in \LL_r^1(\Fct):\ {\rm curl}_F\,v\in \revj{\LL_{r-1}^2(\Fct)} \},\quad &&\mathring{S}_{{\rm curl},r}^1(\Fct) :=\{v\in \mathring{\LL}_r^1(\Fct):\   {\rm curl}_F\,v \in \revj{\mathring{\LL}_{r-1}^2(\Fct)}  \},\\
&S_r^2(\Fct) :=\LL_r^2(\Fct),\quad && \mathring{S}_r^2(\Fct) :=\mathring{\LL}_r^2(\Fct).
\end{alignat*}
\revj{We note that $V_{{\rm curl}, r}^1(\Fct) \big(\MJN{resp.,} \, S_{{\rm curl}, r}^1(\Fct) \big)$ and $V_{{\rm div}, r}^1(\Fct) \big(\MJN{resp.,} \, S_{{\rm div}, r}^1(\Fct) \big)$ \MJN{are isomorphic}. For notational convenience we sometimes drop the ${\rm curl}$ and ${\rm div}$ from the subscripts} \MJN{of these spaces}.

Several combinations of these spaces form exact sequences, which are summarized below.


\begin{theorem}\label{2dseqs}
Let $r \geq 1$. The following sequences are exact \cite{arnold1992quadratic,FuGuzman}.
\begin{subequations}
\begin{alignat}{4}
&\mathbb{R}\
{\xrightarrow{\hspace*{0.5cm}}}\
{{\LL}}_{r}^0(\FCT)\
&&\stackrel{{\rm grad}_F}{\xrightarrow{\hspace*{0.5cm}}}\
{{V}}_{{\rm curl},r-1}^1(\FCT)\
&&\stackrel{{\rm curl}_F}{\xrightarrow{\hspace*{0.5cm}}}\
{{V}}_{r-2}^2(\FCT)\
&&\xrightarrow{\hspace*{0.5cm}}\
 0,\label{alfseq1}\\
 &\mathbb{R}\
{\xrightarrow{\hspace*{0.5cm}}}\
{S}_{r}^0(\FCT)\
&&\stackrel{{\rm grad}_F}{\xrightarrow{\hspace*{0.5cm}}}\
{\LL}_{r-1}^1(\FCT)\
&&\stackrel{{\rm curl}_F}{\xrightarrow{\hspace*{0.5cm}}}\
{V}_{r-2}^2(\FCT)\
&&\xrightarrow{\hspace*{0.5cm}}\
 0,\label{alfseq2}\\
  &\mathbb{R}\
{\xrightarrow{\hspace*{0.5cm}}}\
{S}_{r}^0(\FCT)\
&&\stackrel{{\rm grad}_F}{\xrightarrow{\hspace*{0.5cm}}}\
{S}_{\revj{\rm curl}, r-1}^1(\FCT)\
&&\stackrel{{\rm curl}_F}{\xrightarrow{\hspace*{0.5cm}}}\
{\LL}_{r-2}^2(\FCT)\
&&\xrightarrow{\hspace*{0.5cm}}\
 0, \label{alfseq3} \\
&0\
{\xrightarrow{\hspace*{0.5cm}}}\
{\mathring{\LL}}_{r}^0(\FCT)\
&&\stackrel{{\rm grad}_F}{\xrightarrow{\hspace*{0.5cm}}}\
{\mathring{V}}_{\curl,r-1}^1(\FCT)\
&&\stackrel{{\rm curl}_F}{\xrightarrow{\hspace*{0.5cm}}}\
{\mathring{V}}_{r-2}^2(\FCT)\
&&\xrightarrow{\hspace*{0.5cm}}\
 0,\label{2dbdryseq1}\\
 &0\
{\xrightarrow{\hspace*{0.5cm}}}\
\mathring{S}_{r}^0(\FCT)\
&&\stackrel{{\rm grad}_F}{\xrightarrow{\hspace*{0.5cm}}}\
\mathring{\LL}_{r-1}^1(\FCT)\
&&\stackrel{{\rm curl}_F}{\xrightarrow{\hspace*{0.5cm}}}\
\mathring{V}_{r-2}^2(\FCT)\
&&\xrightarrow{\hspace*{0.5cm}}\
 0,\label{2dbdryseq2}\\
  &0\
{\xrightarrow{\hspace*{0.5cm}}}\
\mathring{S}_{r}^0(\FCT)\
&&\stackrel{{\rm grad}_F}{\xrightarrow{\hspace*{0.5cm}}}\
\mathring{S}_{\revj{\rm curl}, r-1}^1(\FCT)\
&&\stackrel{{\rm curl}_F}{\xrightarrow{\hspace*{0.5cm}}}\
\mathring{\LL}_{r-2}^2(\FCT)\
&&\xrightarrow{\hspace*{0.5cm}}\
 0. \label{2dbdryseq3}
 \end{alignat}
 \end{subequations}
 \end{theorem}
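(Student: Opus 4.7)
The plan is to establish the six sequences in increasing order of smoothness of the middle space, with the key input at each stage being surjectivity of the final arrow combined with a dimension count (or, for the boundary versions, a parallel construction that also enforces vanishing traces).

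First I would handle \eqref{alfseq1} and \eqref{2dbdryseq1}, which are the standard two-dimensional de Rham sequences on a contractible polygonal domain. Exactness here does not use the special structure of the Clough--Tocher split: it follows from the classical theory of N\'ed\'elec/Raviart--Thomas finite elements on an arbitrary shape-regular triangulation of a simply connected planar domain, i.e., the kernel of $\grad_F$ on $\LL_r^0(\FCT)$ is the constants (trivial in the boundary version), the range of $\grad_F$ equals the kernel of $\curl_F$ on $V_{\curl,r-1}^1(\FCT)$, and $\curl_F$ is surjective onto $V_{r-2}^2(\FCT)$ (respectively onto $\mathring{V}_{r-2}^2(\FCT)$ in the homogeneous case) by an $L^2$-orthogonal decomposition argument.

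For \eqref{alfseq2} and \eqref{2dbdryseq2}, the key input is the surjectivity of $\curl_F\colon \LL_{r-1}^1(\FCT)\to V_{r-2}^2(\FCT)$, which is equivalent (after a $90^\circ$ rotation) to the inf-sup stability of the continuous $P_{r-1}$/discontinuous $P_{r-2}$ Stokes pair on Clough--Tocher splits established by Arnold and Qin \cite{arnold1992quadratic}. The structural reason this works is that the split vertex $m_F$ is a \emph{singular vertex} of $\FCT$ (its three incident edges lie on three distinct lines), which obstructs the pressure space only through the constant mode. Exactness at $\LL_{r-1}^1(\FCT)$ then follows from the inclusion $\grad_F(S_r^0(\FCT))\subset \ker(\curl_F)\cap \LL_{r-1}^1(\FCT)$ combined with the fact that any curl-free vector field in $\LL_{r-1}^1(\FCT)$ admits a scalar potential whose tangential continuity across interior edges forces it to lie in $S_r^0(\FCT)$.

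For the smoothest sequences \eqref{alfseq3} and \eqref{2dbdryseq3}, the essential step is surjectivity of $\curl_F\colon S_{\curl,r-1}^1(\FCT)\to \LL_{r-2}^2(\FCT)$: given a Lagrange scalar $q$, one must produce a Lagrange vector field whose curl is $q$ \emph{and} whose curl itself belongs to the Lagrange space. This is precisely the content of \cite{FuGuzman}: the extra continuity of the right-hand side across interior edges of $\FCT$ allows one first to construct a preimage in $\LL_{r-1}^1(\FCT)$ using the previous step, and then to correct it by a gradient of an appropriate $S_r^0(\FCT)$ function so that the resulting field has the required regularity of its curl. Exactness at the remaining slots follows from dimension counting, using that the alternating sum of dimensions in each complex matches the Euler-characteristic count for $\FCT$. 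The boundary versions are handled by running each step while simultaneously enforcing vanishing traces, which is possible because the singular-vertex obstruction at $m_F$ is interior and does not interact with boundary data.

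The main obstacle is the surjectivity in \eqref{alfseq3}, where one must simultaneously arrange interior $C^0$ continuity of $v$ and $C^0$ continuity of $\curl_F v$ at the split vertex; all the nontrivial geometric content of the Clough--Tocher split enters at this point, and once it is settled the remaining exactness statements follow by essentially formal arguments.
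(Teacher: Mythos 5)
The paper does not actually prove Theorem \ref{2dseqs}: it is imported from the literature (\cite{arnold1992quadratic,FuGuzman}), so your sketch is reconstructing an argument the authors chose to cite. Your overall route --- classical exactness for \eqref{alfseq1}/\eqref{2dbdryseq1}, the Arnold--Qin/Scott--Vogelius surjectivity of ${\rm curl}_F$ from the Lagrange space onto the discontinuous space for \eqref{alfseq2}/\eqref{2dbdryseq2}, and the smoother sequences layered on top --- is the standard one and is consistent with those references. Two points need repair, though.

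First, you call $m_F$ a \emph{singular} vertex while correctly observing that its three incident edges lie on three distinct lines; that is precisely the definition of a \emph{non-singular} interior vertex. A singular vertex (edges falling on exactly two lines) would impose an additional alternating-sum constraint on the range of ${\rm div}_F$ (equivalently ${\rm curl}_F$), which is exactly what does \emph{not} happen at the Clough--Tocher split point; the only obstruction in the homogeneous case is the global mean. Your conclusion is right, but in a paper whose central theme is the effect of singular edges this terminology must be fixed.

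Second, you locate ``all the nontrivial geometric content'' in the surjectivity of ${\rm curl}_F: S^1_{{\rm curl},r-1}(\FCT)\to \LL^2_{r-2}(\FCT)$ and propose to obtain it by first producing a preimage in $\LL^1_{r-1}(\FCT)$ and then ``correcting it by a gradient so that the resulting field has the required regularity of its curl.'' That step is both unnecessary and incoherent: adding ${\rm grad}_F$ of anything does not change the curl, so it cannot improve the curl's regularity; and no correction is needed, because if $w\in \LL^1_{r-1}(\FCT)$ satisfies ${\rm curl}_F\, w = q$ with $q\in \LL^2_{r-2}(\FCT)$, then $w\in S^1_{{\rm curl},r-1}(\FCT)$ by the very definition of that space. (This is the same one-line observation the paper uses for parts (iii)--(iv) of Theorems \ref{divthm} and \ref{curlthm}.) The genuinely hard surjectivity is the one in \eqref{alfseq2}/\eqref{2dbdryseq2}, onto the full discontinuous space; once that is established, \eqref{alfseq3}/\eqref{2dbdryseq3} follow formally, as does exactness at the middle slots via the potential argument you already give (which is preferable to dimension counting, since the dimensions of the smooth spaces are themselves usually \emph{derived} from exactness). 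With these two repairs your outline is sound.
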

Theorems \ref{2dseqs} has an alternate form that follows from a rotation of the coordinate axes, where the operators ${\rm grad}$ and ${\rm curl}$ are replaced by ${\rm rot}$ and ${\rm div}$, respectively.

\begin{corollary}\label{cor:rotdiv}
Let $r \geq 1$. The following sequences are exact \cite{arnold1992quadratic,FuGuzman}.
\begin{subequations}
\begin{alignat}{4}
&\mathbb{R}\
{\xrightarrow{\hspace*{0.5cm}}}\
{{\LL}}_{r}^0(\FCT)\
&&\stackrel{{\rm rot}_F}{\xrightarrow{\hspace*{0.5cm}}}\
{{V}}_{{\rm div},r-1}^1(\FCT)\
&&\stackrel{{\rm div}_F}{\xrightarrow{\hspace*{0.5cm}}}\
{{V}}_{r-2}^2(\FCT)\
&&\xrightarrow{\hspace*{0.5cm}}\
 0,\label{altalfseq1}\\
 &\mathbb{R}\
{\xrightarrow{\hspace*{0.5cm}}}\
{S}_{r}^0(\FCT)\
&&\stackrel{{\rm rot}_F}{\xrightarrow{\hspace*{0.5cm}}}\
{\LL}_{r-1}^1(\FCT)\
&&\stackrel{{\rm div}_F}{\xrightarrow{\hspace*{0.5cm}}}\
{V}_{r-2}^2(\FCT)\
&&\xrightarrow{\hspace*{0.5cm}}\
 0,\label{altalfseq2}\\
  &\mathbb{R}\
{\xrightarrow{\hspace*{0.5cm}}}\
{S}_{r}^0(\FCT)\
&&\stackrel{{\rm rot}_F}{\xrightarrow{\hspace*{0.5cm}}}\
{S}_{\revj{\rm div}, r-1}^1(\FCT)\
&&\stackrel{{\rm div}_F}{\xrightarrow{\hspace*{0.5cm}}}\
{\LL}_{r-2}^2(\FCT)\
&&\xrightarrow{\hspace*{0.5cm}}\
 0, \label{altalfseq3} \\
&0\
{\xrightarrow{\hspace*{0.5cm}}}\
{\mathring{\LL}}_{r}^0(\FCT)\
&&\stackrel{{\rm rot}_F}{\xrightarrow{\hspace*{0.5cm}}}\
{\mathring{V}}_{\dive,r-1}^1(\FCT)\
&&\stackrel{{\rm div}_F}{\xrightarrow{\hspace*{0.5cm}}}\
{\mathring{V}}_{r-2}^2(\FCT)\
&&\xrightarrow{\hspace*{0.5cm}}\
 0,\label{alt2dbdryseq1}\\
 &0\
{\xrightarrow{\hspace*{0.5cm}}}\
\mathring{S}_{r}^0(\FCT)\
&&\stackrel{{\rm rot}_F}{\xrightarrow{\hspace*{0.5cm}}}\
\mathring{\LL}_{r-1}^1(\FCT)\
&&\stackrel{{\rm div}_F}{\xrightarrow{\hspace*{0.5cm}}}\
\mathring{V}_{r-2}^2(\FCT)\
&&\xrightarrow{\hspace*{0.5cm}}\
 0,\label{alt2dbdryseq2}\\
  &0\
{\xrightarrow{\hspace*{0.5cm}}}\
\mathring{S}_{r}^0(\FCT)\
&&\stackrel{{\rm rot}_F}{\xrightarrow{\hspace*{0.5cm}}}\
\mathring{S}_{\revj{\rm div}, r-1}^1(\FCT)\
&&\stackrel{{\rm div}_F}{\xrightarrow{\hspace*{0.5cm}}}\
\mathring{\LL}_{r-2}^2(\FCT)\
&&\xrightarrow{\hspace*{0.5cm}}\
 0. \label{alt2dbdryseq3}
 \end{alignat}
 \end{subequations}
\end{corollary}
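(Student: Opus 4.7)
The plan is to derive Corollary \ref{cor:rotdiv} directly from Theorem \ref{2dseqs} via the $\pi/2$-rotation isomorphism on the tangent plane of $F$, since the two sets of sequences differ only by the interchange $({\rm grad}_F,{\rm curl}_F)\leftrightarrow ({\rm rot}_F,{\rm div}_F)$. Using the orthonormal tangent frame $\{\tau,\tang\}$ of $F$ fixed in Definition \ref{def:nst}, I let $R\colon a\tau+b\tang\mapsto -b\tau+a\tang$ be the $\pi/2$-rotation. Then $R$ maps the tangent plane onto itself and satisfies the pointwise identities
\[
{\rm rot}_F u = R\,{\rm grad}_F u, \qquad {\rm div}_F(Rv) = {\rm curl}_F v,
\]
for any smooth scalar $u$ and any smooth tangent vector field $v$ on $F$; these follow from \eqref{curlid}--\eqref{gradid} and the rotation formula given after them (or by a direct componentwise computation).

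Next, I verify that the action $v\mapsto Rv$, applied componentwise on each triangle of $\Fct$, induces isomorphisms between the vector spaces of Theorem \ref{2dseqs} and the corresponding spaces of Corollary \ref{cor:rotdiv}. The map $R$ preserves $[\pol_r(F)]^2$ piecewise. Since the $\pi/2$-rotation sends the unit tangent of any edge $e\subset F$ to its unit in-plane normal (up to sign), the condition that $v\cdot\tau_e$ is continuous across interior edges of $\Fct$ translates exactly into the condition that $Rv\cdot n_e$ is continuous, giving the isomorphisms $V^1_{{\rm curl},r}(\Fct)\cong V^1_{{\rm div},r}(\Fct)$ and, by the second identity above, $S^1_{{\rm curl},r}(\Fct)\cong S^1_{{\rm div},r}(\Fct)$ (because smoothness of ${\rm curl}_F v$ becomes the same smoothness of ${\rm div}_F(Rv)$). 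The same swap of tangential and normal traces on $\p F$ yields the analogous isomorphisms between the $\mathring{(\cdot)}$ versions of these spaces. The scalar spaces $\LL_r^0(\Fct)$, $S_r^0(\Fct)$, $V_r^2(\Fct)$, $\LL_r^2(\Fct)$, and their boundary-condition versions appear identically in both statements.

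With these ingredients in hand, I assemble a commutative diagram stacking each sequence of Corollary \ref{cor:rotdiv} above the corresponding sequence of Theorem \ref{2dseqs}, with vertical maps equal to the identity on scalar spaces and to $R$ on the middle vector space; the two identities above make each square commute. Since every vertical map is a linear isomorphism, exactness of the bottom row implies exactness of the top row, and the corollary follows. The only real care required is to fix a consistent sign convention for ${\rm rot}_F$ and for the orientation of $R$ at the outset (handled by the frame $\{\tau,\tang,n_F\}$), so there is no genuine obstacle, only careful bookkeeping.
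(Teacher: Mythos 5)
Your proposal is correct and is essentially the same argument the paper relies on: the paper justifies Corollary \ref{cor:rotdiv} precisely by the observation that it "follows from a rotation of the coordinate axes," and your write-up simply makes explicit the $\pi/2$-rotation isomorphism, the identities ${\rm rot}_F = \pm R\,{\rm grad}_F$ and ${\rm div}_F\circ R = \pm{\rm curl}_F$, and the induced isomorphisms $V^1_{{\rm curl},r}(\Fct)\cong V^1_{{\rm div},r}(\Fct)$ and $S^1_{{\rm curl},r}(\Fct)\cong S^1_{{\rm div},r}(\Fct)$ (the sign ambiguity you flag is immaterial for exactness). No gaps.
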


\begin{remark}
From Theorem \ref{2dseqs}, Corollary \ref{cor:rotdiv}, the well-known dimension formulas
of the \MJN{N\'ed\'elec} and Lagrange spaces,
and the rank--nullity theorem, one easily finds
the dimensions of the smoother spaces $S^k_r(\Fct)$ (cf.~\cite{FuGuzman}).  These counts
are summarized in Table \ref{tab:2DDim}.

\end{remark}

\begin{table}
\caption{\label{tab:2DDim}Dimension counts of the canonical (two--dimensional) N\'ed\'elec, Lagrange, and smooth spaces with and without boundary conditions
with respect to the Clough--Tocher split.  Here, $\dim V_{{\rm div},r}^1(\Fct) = \dim V_{{\rm curl},r}^1(\Fct) =:\dim V_r^1(\Fct)$}
{\scriptsize 
\begin{tabular}{c|ccc}
& $k=0$ & $k=1$ & $k=2$\\
\hline
$\dim V_r^k(\Fct)$ & $\frac12(3r^2 + 3r + 2)$  & $3(r+1)^2$ & $\frac{3}{2}(r+1)(r+2)$\\
$\dim \mathring{V}_r^k(\Fct)$ & $ \frac12(3r^2 - 3r + 2)$ & $3r(r+1)$ & $\frac{3}{2}(r+1)(r+2) - 1$\\
$\dim \LL_r^k(\Fct)$ & $ \frac{1}{2}(3r^2 + 3r + 2)$ & \MJN{$3r^2 + 3r + 2$} & $\frac{1}{2}(3r^2 + 3r + 2)$\\
$\dim \mathring{\LL}_r^k(\Fct)$ & $\frac{1}{2}(3r^2 - 3r + 2)$ & $3r^2 - 3r + 2$ & $\frac{3}{2}r(r-1)$\\
$\dim S_r^k(\Fct)$ & $ \frac{3}{2}(r^2 - r +2)$ & $3r^2 + 3$ & $\frac{1}{2}(3r^2 + 3r + 2)$\\
$\dim \mathring{S}_r^k(\Fct)$ & $ \frac{3}{2}(r^2 - 5r + 6)$ & $ 3r^2 -9r+6$ & $\frac{3}{2}r(r - 1)$\\
$\dim \mathcal{R}_{r}^k(\Fct)$ & $\frac{3}{2}(r-1)(r-2)$ & $3(r-1)^2$ &---
\end{tabular}
}
\end{table}

We will use the following intermediate spaces when developing commuting projections on the Worsey-Farin split:
\[
\mathcal{R}_r^0(\FCT) := \{v \in S_r^0(\FCT) : v|_{\partial F} = 0\},\quad \mathcal{R}_r^1(\FCT):= \{v \in S_{\dive, r}^1(\FCT) : v|_{\partial F} = 0\}. 
\]
It is shown in \cite{ALphd} that $\dim \mathcal{R}_r^0(\FCT) = \frac{3}{2}(r-1)(r-2)$ and $\dim \mathcal{R}_r^1(\FCT) = 3(r-1)^2.$

\subsection{Surjectivity of the divergence operator on discrete local spaces}\label{sec-sur-local-div}
The goal of this section is to prove the following theorem. 

\begin{theorem}\label{divthm}
Let $r \ge 0$. Then:
\begin{enumerate}[align=left, leftmargin=0pt, labelindent=\parindent]
\item[(i)] for each $p \in \mathring{\VV}_r^3(\Twf)$, there exists a $v \in \mathring{\LL}_{r+1}^2(\Twf)$ such that $\dive v = p$.\smallskip
\item[(ii)] for each $p \in \mathring{V}_r^3(\Twf)$, there exists a $v \in \LL_{r+1}^2(\Twf) \cap \mathring{V}_{r+1}^2(\Twf)$ such that $\dive v = p$.\smallskip
\item[(iii)] for each $p \in V_r^3(\Twf)$, there exists a $v \in \LL_{r+1}^2(\Twf)$ such that $\dive v = p$. \smallskip
\item[(iv)] for each $p \in \mathring{\LL}_r^3(\Twf)$ (resp., $p \in \LL_r^3(\Twf)$), there exists a $v \in \mathring{S}_{r+1}^2(\Twf)$ (resp., $v \in S_{r+1}^2(\Twf)$) such that $\dive v = p$.
\end{enumerate}
\end{theorem}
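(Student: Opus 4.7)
The plan is to handle (iii) first (no boundary conditions), then derive (ii) and (i) by incorporating boundary conditions of increasing strength, and finally obtain (iv) as a direct corollary. Throughout, the Clough--Tocher exact sequences of Theorem \ref{2dseqs} and Corollary \ref{cor:rotdiv}, combined with the surface differential identities \eqref{curlid}--\eqref{gradid}, are the workhorses that translate each 3D problem into face-by-face 2D problems.

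For (iii), the starting point is the exactness of the classical N\'ed\'elec sequence \eqref{seq1}: there exists $\tilde v \in V_{r+1}^2(\Twf)$ with $\dive \tilde v = p$. This $\tilde v$ has continuous normal components across interior faces of $\Twf$ (since $\tilde v \in H(\dive;T)$) but may have discontinuous tangential components. I would produce the desired $v \in \LL_{r+1}^2(\Twf)$ by subtracting a divergence-free correction $\curl u$, with $u \in V_{r+2}^1(\Twf)$ chosen so that the tangential jumps of $\curl u$ on each interior face of $\Twf$ exactly cancel those of $\tilde v$. Face by face, using \eqref{curlid}, this reduces to a 2D rot/curl problem on each interior face, which is solvable via the Clough--Tocher sequences \eqref{alfseq1}--\eqref{alfseq3} applied on each $\Fct$ (for $F \in \Delta_2(T)$) and on the analogous planar triangulations of the interior ``fan'' faces emanating from the incenter $z_T$.

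For (ii), given $p \in \mathring{V}_r^3(\Twf)$, I would first apply (iii) to obtain a Lagrange $\tilde v$ with $\dive \tilde v = p$, and then correct the normal trace of $\tilde v$ on $\partial T$ to zero by subtracting a suitable $\curl u$ with $u \in V_{r+2}^1(\Twf)$. The compatibility $\int_{\partial T} \tilde v \cdot n = \int_T p = 0$ together with the 2D boundary CT sequences \eqref{alt2dbdryseq1}--\eqref{alt2dbdryseq3} applied on each $\Fct$ produces the required surface-level $u$ without destroying the Lagrange continuity obtained in (iii). For (i), the additional face-continuity of $p \in \mathring{\VV}_r^3$ allows one to further prescribe that the \emph{tangential} trace of $v$ vanish on $\partial T$. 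The construction employs the ansatz $v = \mu w + (\text{correction})$, with $\mu$ the piecewise linear bubble vanishing on $\partial T$; the factor $\mu$ makes the Dirichlet boundary condition automatic, and the remaining corrections to achieve $\dive v = p$ are produced by an iterative matching using \eqref{divid}, \eqref{gradid}, and the 2D CT sequences. Finally, (iv) is immediate: if $p \in \LL_r^3(\Twf)$ (respectively $\mathring{\LL}_r^3(\Twf) \subset \mathring{\VV}_r^3(\Twf)$), then applying (iii) (respectively (i)) yields a $v \in \LL_{r+1}^2(\Twf)$ (respectively $\mathring{\LL}_{r+1}^2(\Twf)$) with $\dive v = p \in \LL_r^3$, so by the very definition of $S_{r+1}^2$ this $v$ already lies in $S_{r+1}^2(\Twf)$ (respectively $\mathring{S}_{r+1}^2(\Twf)$).

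The main obstacle I anticipate is in (i), where the face-continuity hypothesis on $p$ must be used in a structural way. The underlying phenomenon is the intrinsic weak-continuity property that the divergence of any Lagrange vector field must satisfy at the singular edges of the Worsey--Farin split (edges lying in only two planes, created by the incenter construction on adjacent tetrahedra). Characterizing the range of $\dive:\mathring{\LL}_{r+1}^2(\Twf)\to \mathring{V}_r^3(\Twf)$ as exactly $\mathring{\VV}_r^3(\Twf)$ requires a careful analysis of how tangential degrees of freedom near the singular edges propagate through the 2D CT sequences on the boundary faces $\Fct$ and on the fan faces through $z_T$. Verifying that the continuity of $p|_F$ on each face $F$ of $T$ encodes precisely this constraint, and that no additional constraint is missed, is the delicate heart of the argument.
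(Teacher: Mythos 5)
Your outline inverts the paper's logical order: the paper proves part (i) first (by the $\mu$-power ansatz you describe for last), then obtains (ii) from (i) via Lemma \ref{lemma2}, then (iii) from (ii) by splitting off the mean value, and finally (iv) exactly as you do. This is not merely a cosmetic difference, because the two steps you place at the front contain genuine gaps. For (iii), you start from a N\'ed\'elec field $\tilde v\in V_{r+1}^2(\Twf)$ with $\dive \tilde v=p$ and propose to kill its tangential jumps by subtracting $\curl u$ with $u\in V_{r+2}^1(\Twf)$, "face by face." But the tangential jumps of $\curl u$ across the $18$ interior faces of $\Twf$ cannot be prescribed independently: around each interior edge of $\Twf$ (e.g.\ the edges $[z_T,m_F]$ and $[z_T,x_i]$, where three or more interior faces meet) these jumps satisfy compatibility constraints, and it is precisely at such edges --- including the singular edges that characterize the Worsey--Farin split --- that the obstruction lives. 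You would have to verify that the jumps of $\tilde v$ satisfy the same constraints and that the resulting coupled $2$D problems on the interior ``fan'' faces are solvable; none of this is addressed, and the paper deliberately avoids it by never posing this global jump-matching problem. For (ii), the proposed correction of the normal trace by subtracting $\curl u$ with $u\in V_{r+2}^1(\Twf)$ does \emph{not} preserve Lagrange continuity: $\curl u$ for a merely $H(\curl)$-conforming $u$ lies only in $H(\dive;T)$, so the corrected field would leave $\LL_{r+1}^2(\Twf)$. The paper instead builds the boundary correction $\psi$ in Lemma \ref{lemma2} directly as a Lagrange field, using the $2$D Clough--Tocher sequences on each $\Fct_i$ together with the explicit single-$K_i$ extension of Lemma \ref{lemmaaux861}.

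Where your proposal is closest to the paper is part (i): the ansatz $v=\mu w_r+\mu^2 w_{r-1}+\cdots$ with iterative matching through the $2$D CT sequences is exactly the paper's mechanism (Lemmas \ref{lemma1}--\ref{lemma4} and the induction in the proof of part (i)). Note, however, that the face-continuity hypothesis $p\in\mathring{\VV}_r^3(\Twf)$ enters not through a singular-edge analysis but much more concretely: in Lemma \ref{lemma1} one needs $q|_{F_i}$ to agree on $\p F_i$ with a single polynomial $b_i$ (so that $q-b_i$ is a continuous function vanishing on $\p F_i$ and hence extendable as a Lagrange field supported in $K_i$), and this is exactly what continuity of $q$ on each face provides. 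If you reorganize your argument to prove (i) first by this route and then derive (ii) and (iii) as the paper does --- (ii) from Lemma \ref{lemma2} with $s=0$ plus part (i), and (iii) by subtracting the mean --- the problematic curl-correction steps disappear entirely.
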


The proofs of Theorem \ref{divthm} parts (i) and (ii) depends on five preliminary lemmas. 
\begin{lemma}\label{lemma1}
{Let $r\ge 1$ and $s\ge 0$ be integers.  Then for any $q \in \VV_r^3(\Twf)$, there exists} $w \in \LL_r^2(\Twf)$ and $g \in V_{r-1}^3 (\Twf)$,  such that $\mu^s q = \dive (\mu^{s+1}w) + \mu^{s+1}g$.
\end{lemma}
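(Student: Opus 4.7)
The plan is to construct $w$ by prescribing its normal trace on $\partial T$ so that the residual $q - (s+1)\nabla\mu\cdot w$ vanishes on $\partial T$, and then to define $g$ piecewise by dividing this residual by the linear function $\mu$ on each sub-tetrahedron.

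Using the product rule $\dive(\mu^{s+1}w) = (s+1)\mu^s\nabla\mu\cdot w + \mu^{s+1}\dive w$, the target identity is equivalent to
\begin{align*}
q - (s+1)\nabla\mu\cdot w = \mu(\dive w + g) \qquad\text{on each } S \in \Twf.
\end{align*}
On each $K_i \in \Ta$, $\mu_i := \mu|_{K_i}$ is the barycentric coordinate of $z_T$, so $\mu_i|_{F_i} = 0$ and $\nabla\mu_i = -c_i n_i$ for some positive constant $c_i$, where $n_i$ is the outward unit normal of $F_i$; in particular, $(\nabla\mu\cdot w)|_{F_i} = -c_i(w\cdot n_i)|_{F_i}$. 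The first step is to construct $w\in\LL_r^2(\Twf) = [\LL_r^0(\Twf)]^3$ with prescribed normal trace
\begin{align*}
(w\cdot n_i)|_{F_i} = \frac{q|_{F_i}}{(s+1)c_i}, \qquad i = 0,\ldots,3.
\end{align*}
Since $q\in\VV_r^3(\Twf)$, each trace $q|_{F_i}$ is continuous on $F_i$ and piecewise polynomial of degree $r$ on the Clough--Tocher split of $F_i$, matching the regularity of normal traces of elements of $[\LL_r^0(\Twf)]^3$. I would specify $w$ at the boundary Lagrange nodes as follows: at a vertex of $T$, the three normal constraints from the three incident faces uniquely determine $w$ there (since any three face normals of a tetrahedron are linearly independent); at a Lagrange node on an edge of $T$, the two normal constraints leave a single tangential degree of freedom; and at a node in the interior of a face, only one constraint applies. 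Setting the remaining tangential and all interior degrees of freedom to zero then produces a continuous $w \in \LL_r^2(\Twf)$.

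Having constructed $w$, the function $q - (s+1)\nabla\mu\cdot w$ vanishes on $\partial T$. Each $S\in\Twf$ has exactly one triangular face $\sigma$ on $\partial T$, contained in some $F_i$, and $\mu|_S = \mu_i|_S$ is linear and vanishes on the plane containing $F_i$. Since a polynomial of degree $r$ vanishing on the two-dimensional set $\sigma$ must vanish on the entire plane through $\sigma$, I can factor
\begin{align*}
(q - (s+1)\nabla\mu\cdot w)|_S = \mu_i\, h_S, \qquad h_S\in\pol_{r-1}(S).
\end{align*}
Setting $g|_S := h_S - \dive w|_S$ then defines $g\in V_{r-1}^3(\Twf)$ (no interelement continuity is needed), and by construction $\mu(\dive w + g) = q - (s+1)\nabla\mu\cdot w$ on each $S$, so multiplying by $\mu^s$ yields the claimed identity.

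The hard part will be justifying that the prescribed normal traces on $\partial T$ can be simultaneously realized at every boundary Lagrange node by a globally continuous vector field in $[\LL_r^0(\Twf)]^3$. This rests on two geometric properties of a tetrahedron: at each vertex of $T$, the three incident face normals are linearly independent, and along each edge of $T$, the two incident face normals span the plane perpendicular to the edge. Combined with the continuity of $q$ on each face of $T$, these properties guarantee that the node-by-node prescription is always consistent on the shared sub-simplices of $\partial T$ and therefore defines an admissible element of $\LL_r^2(\Twf)$.
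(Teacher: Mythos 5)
Your argument is correct in substance and follows the same skeleton as the paper's proof: prescribe the normal trace of $w\in\LL_r^2(\Twf)$ on $\partial T$ so that $q-(s+1)\grad\mu\cdot w$ vanishes there, then factor the linear function $\mu_i$ out of the residual on each $S\in\Twf$ (each of which has exactly one face in $\partial T$) and absorb $\dive w$ into $g$. Where you genuinely differ is in the construction of $w$. The paper splits the target trace $q|_{F_i}$ into a single polynomial $b_i\in\pol_r(F_i)$ agreeing with $q|_{F_i}$ on $\partial F_i$ --- realized by one polynomial field $w_1\in[\pol_r(T)]^3$ via the divergence-conforming N\'ed\'elec degrees of freedom of the second kind --- plus the remainder $q-b_i$, which vanishes on $\partial F_i$ and is therefore extendable by a Lagrange function $a_i$ supported in $K_i$, the correction being $\tfrac{1}{s+1}\sum_i a_i\,\grad\mu_i/|\grad\mu_i|^2$. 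You instead assign the nodal values of $w$ directly, using that at a vertex of $T$ the three incident face normals are linearly independent and that along an edge the two incident normals span the orthogonal complement of the edge direction. This is an equally valid and arguably more elementary construction that avoids the N\'ed\'elec machinery, at the price of a node-by-node consistency check, which you carry out correctly: at any shared boundary node the constraints act on linearly independent directions, so no compatibility condition arises, and since $q\in\VV_r^3(\Twf)$ is continuous on each $F_i$, the resulting $w\cdot n_i|_{F_i}$ and the target are both continuous piecewise polynomials of degree $r$ on $\Fct_i$ agreeing at every Lagrange node, hence equal.

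One small slip: with $\grad\mu_i=-c_i n_i$ and $c_i>0$, your prescription should read $(w\cdot n_i)|_{F_i}=-\,q|_{F_i}/\big((s+1)c_i\big)$; as written, the residual on $F_i$ equals $2q$ rather than $0$. This is pure sign bookkeeping and does not affect the validity of the argument.
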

\begin{proof} 
{Let $q\in \VV_r^3(\Twf)$ and $s\ge 0$.
Because $q|_{F_i}$ is continuous on each $F_i\in \Delta_2(T)$, 
there exists $b_i\in \pol_r(F_i)$ such that $b_i = q|_{F_i}$ on $\p F_i$.}
Thus $q-b_i$ is continuous on $F_i$ and vanishes on $\partial F_i$. 
Consequently, there exists $a_i\in \LL_r^{\revj 3}(\Twf)$ such that $a_i = (q-b_i)$ on $F_i$ and $\text{supp}(a_i)\subseteq K_i$.
Using the divergence-conforming N\'ed\'elec degrees of freedom of the second kind \cite{Nedelec86} , and the fact that $\grad \mu_i$ is parallel
to the outward unit normal of $F_i$, there exists $w_1 \in [\pol_r(T)]^3$ such that
\begin{align*}
	(s+1)w_1 \cdot \grad \mu_i = b_i \quad \text{on } F_i.
\end{align*}
 We also define $w_2 \in \LL_r^2(\Twf)$ \MJN{as}
\begin{align*}
	w_2 := \frac{1}{s+1}\sum_{i=1}^4 a_i \ell_i,
\end{align*}
where  $\ell_i: = \frac{\grad \mu_i}{|\grad \mu_i|^2}$. Finally, we set $w: = w_1+w_2 \in \LL_r^2(\Twf)$.  We then see that 
\begin{equation*}
 q- (s+1)w \cdot \grad \mu=0 \quad \text{ on } \partial T,
\end{equation*}
and, hence, there exists $p \in  V_{r-1}^3(\Twf)$ such that 
\begin{equation*}
 q= (s+1)w \cdot \grad \mu+ \mu p  \quad  \text{ on } T.
\end{equation*}
Setting $g := \revj{p -\dive w } \in V_{r-1}^3(\Twf)$ we have 
\begin{alignat*}{1}
\mu^s q=&   (s+1) \mu^s w \cdot \grad \mu+ \mu^{s+1} p 
= \dive( \mu^{\revj {s+1}} w)+ \mu^{s+1} g.
\end{alignat*}
\end{proof}

\begin{lemma}\label{lemmaaux861}
\revj{ Let $K \in \Ta$, $F \in \Delta_2(T)$ with $F \subset \partial K$ and let $n_F$ denote the outward pointing unit normal to $F$}. If $p \in \mathring{\LL}_r^1(\Fct)$ then there exists $q \in  \LL_r^1(\Twf)$ such that {$q|_F = p$}, $\text{supp}(q) \subset K$ and $q \cdot \revj{n_F}=0$ on $K$. 
\end{lemma}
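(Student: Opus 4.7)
The plan is to reduce the vector problem to a scalar extension problem, then carry out the scalar extension sub-tetrahedron-by-sub-tetrahedron on the Worsey-Farin subdivision of $K$. Since $\tau$ and $\tang$ are fixed vectors spanning the tangent plane of $F$ (see Definition~\ref{def:nst}), any $p\in\mathring{\LL}_r^1(\Fct)$ decomposes uniquely as $p = a\,\tau + b\,\tang$ with $a,b\in\mathring{\LL}_r^0(\Fct)$. If I can produce, for each scalar $a\in\mathring{\LL}_r^0(\Fct)$, an extension $\tilde a\in\LL_r^0(\Twf)$ with $\tilde a|_F = a$ and $\supp(\tilde a)\subset K$, then $q := \tilde a\,\tau + \tilde b\,\tang$ immediately satisfies $q|_F = p$, $\supp(q)\subset K$, and $q\cdot n_F = 0$ (the last because $\tau,\tang\perp n_F$). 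Everything therefore reduces to constructing the scalar extension.

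Without loss of generality assume $K = K_i$ and $F = F_i$. Then $\Twf$ refines $K$ into the three sub-tetrahedra $S_j = [z_T, m_{F_i}, x_k, x_\ell]$ (for $j\in\{0,\ldots,3\}\setminus\{i\}$ with $\{k,\ell\}=\{0,\ldots,3\}\setminus\{i,j\}$), and $\Fct$ refines $F$ into the triangles $\sigma_j := [m_{F_i}, x_k, x_\ell]$, each a face of $S_j$. Denote by $(\lambda_{m_{F_i}},\lambda_{x_k},\lambda_{x_\ell})$ the barycentric coordinates of $\sigma_j$; these same symbols are three of the four barycentric coordinates of $S_j$ (the fourth being $\lambda_{z_T}$). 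I write $a|_{\sigma_j}$ as the unique \emph{homogeneous} polynomial $P_j$ of degree $r$ in $(\lambda_{m_{F_i}},\lambda_{x_k},\lambda_{x_\ell})$, and then define $\tilde a|_{S_j}$ by the same formula $P_j(\lambda_{m_{F_i}},\lambda_{x_k},\lambda_{x_\ell})$ (no dependence on $\lambda_{z_T}$), producing a polynomial of degree $r$ on $S_j$ that matches $a$ on $\sigma_j$.

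Two checks remain. \emph{Vanishing on $\partial K\setminus F$}: each such face has the form $[z_T, x_k, x_\ell] = \{\lambda_{m_{F_i}} = 0\}\cap S_j$, and since $a$ vanishes on the edge $[x_k,x_\ell]\subset\partial F$, the homogeneous bivariate polynomial $P_j(0,\lambda_{x_k},\lambda_{x_\ell})$ vanishes along $\lambda_{x_k}+\lambda_{x_\ell} = 1$ and hence identically; so $\tilde a|_{S_j}$ vanishes on $[z_T, x_k, x_\ell]$. \emph{Continuity across internal faces of $\Twf$ within $K$}: each such face has the form $[z_T, m_{F_i}, x_k]$ shared by $S_j$ and $S_{j'}$, and the traces $P_j(\lambda_{m_{F_i}},\lambda_{x_k},0)$ and $P_{j'}(\lambda_{m_{F_i}},\lambda_{x_k},0)$ are both homogeneous of degree $r$ in $(\lambda_{m_{F_i}},\lambda_{x_k})$ that agree with the Clough--Tocher edge trace $a|_{[m_{F_i}, x_k]}$ along $\lambda_{m_{F_i}}+\lambda_{x_k}=1$; since a homogeneous bivariate polynomial of degree $r$ is determined by its values on that line, the two traces coincide. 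Extending $\tilde a$ by zero outside $K$ then gives the required element of $\LL_r^0(\Twf)$, and setting $q := \tilde a\,\tau + \tilde b\,\tang$ completes the proof. The principal subtlety is the canonical choice of homogenization: without it, the naive ``same polynomial expression'' extension from triangle to tetrahedron would not be well-defined, so it is this step that drives the argument; everything else is straightforward barycentric bookkeeping.
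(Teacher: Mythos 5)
Your proof is correct, and its skeleton coincides with the paper's: both reduce the statement to a scalar extension problem via the decomposition $p = a\,\tau + b\,\tang$ (which immediately yields $q\cdot n_F=0$ since $\tau,\tang\perp n_F$), and both then extend each scalar component to $\Kwf$ so that it vanishes on $\partial K\setminus F$ and can be continued by zero to all of $T$. Where you diverge is in the mechanism of the scalar extension. The paper extends $a\in\mathring{\LL}_r^0(\Fct)$ to $\tilde a\in\LL_r^0(\Kwf)$ by simply setting every Lagrange degree of freedom not located on $F$ to zero; the vanishing of $\tilde a$ on $\partial K\setminus F$ is then automatic, because the trace of a Lagrange function on a face of $K$ is determined by the nodal values on that face, and those nodes either lie on $\partial F$ (where $a=0$) or receive the value zero. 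You instead build the extension sub-tetrahedron by sub-tetrahedron via barycentric homogenization, dropping the dependence on $\lambda_{z_T}$, and then verify by hand the interelement continuity inside $\Kwf$ and the vanishing on $\partial K\setminus F$ using the fact that a homogeneous bivariate polynomial of degree $r$ is determined by its restriction to the line where the coordinates sum to one. Both routes are sound; the paper's is shorter because it delegates the continuity and trace bookkeeping to standard properties of the Lagrange element, while yours is more self-contained and makes the geometric reason for the vanishing on $\partial K\setminus F$ completely explicit (it also exhibits a concrete formula for the extension, which the nodal construction does not). The one point worth flagging is that your argument, as written, silently uses that the restrictions of the tetrahedral barycentric coordinates $\lambda_{m_{F_i}},\lambda_{x_k}$ from the two neighboring $S_j, S_{j'}$ to their shared internal face $[z_T,m_{F_i},x_k]$ agree (both being the barycentric coordinates of that triangle); this is true and easy, but it is the hinge on which your continuity check turns, so it deserves a sentence.
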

\begin{proof}
Let  \MJN{$\{\tau, \tang, n_F\}$} be an orthonormal set with \MJN{$\tau$ and $\tang$} parallel to $F$. Then \MJN{$p=a\tau+b\tang$} for some 
$a,b \in \mathring{\LL}_r^0(\Fct)$. We  extend $a$ and $b$ to all of $K$, which we denote by $\tilde{a}, \tilde{b} \in \revj{\LL}_r^0(\Kwf)$, by setting all the other Lagrange degrees of freedom to be zero. In particular $\tilde{a}$ and $\tilde{b}$ vanish on $\partial K \backslash F$. Hence, we can further extend them by zero to all of $T$ to obtain  $\tilde{a}, \tilde{b} \in \revj{\LL}_r^0(\Twf)$. We then set \MJN{$q=\tilde{a} \tau+ \tilde{b} \tang$}.
\end{proof}

\begin{lemma}\label{lemma2}
For any $\theta \in V_r^3(\Twf)$, with $r\ge 0$, there exists $\psi \in \LL_{r+1}^2(\Twf) \cap \mathring{V}_{r+1}^2(\Twf)$ and $\gamma \in \VV_r^3(\Twf)$ such that
\begin{align}
\mu^s \theta &= \dive (\mu^s \psi) + \mu^s \gamma\qquad \forall s\ge 0.
\end{align}
\end{lemma}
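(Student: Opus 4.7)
The plan is to construct $\psi$ so that $\grad\mu\cdot\psi = 0$ pointwise on $T$. Once this holds, $\dive(\mu^s\psi) = \mu^s\dive\psi$ for every $s \ge 0$, so the target identity $\mu^s\theta = \dive(\mu^s\psi)+\mu^s\gamma$ is equivalent (across all $s$) to the single algebraic relation $\theta = \dive\psi + \gamma$. Since $\grad\mu = \grad\mu_i$ is a nonzero constant multiple of the outward normal $n_i$ of $F_i$ on each Alfeld piece $K_i \in \Ta$, the pointwise condition is equivalent to demanding that $\psi|_{K_i}$ be tangential to $F_i$ throughout $K_i$. This already forces $\psi \cdot n = 0$ on $\partial T$, so membership in $\mathring{V}_{r+1}^2(\Twf)$ will follow automatically.

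I construct $\psi$ face by face. For each $F_i \in \Delta_2(T)$, set $c_i := |F_i|^{-1}\int_{F_i}\theta$, so that $\theta|_{F_i} - c_i \in \mathring{V}_r^2(F_i^{\rm ct})$. Invoking the Clough--Tocher sequence \eqref{alt2dbdryseq2} of Corollary \ref{cor:rotdiv} (with parameter $r+2$), I obtain a continuous tangential vector field $\phi_i \in \mathring{\LL}_{r+1}^1(F_i^{\rm ct})$ vanishing on $\partial F_i$ with ${\rm div}_{F_i}\phi_i = \theta|_{F_i} - c_i$. Lemma \ref{lemmaaux861}, applied with $F = F_i$ and $K = K_i$, then extends $\phi_i$ to $\psi_i \in \LL_{r+1}^2(\Twf)$ satisfying $\psi_i|_{F_i} = \phi_i$, $\supp(\psi_i) \subset K_i$, and $\psi_i \cdot n_i = 0$ on all of $K_i$. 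Set $\psi := \sum_{i=0}^{3}\psi_i$. On each $K_j$ only the single term $\psi_j$ contributes, and $\psi_j \cdot n_j = 0$ there, so $\grad\mu\cdot\psi = 0$ on $T$; together with the tangential face values, this puts $\psi$ in $\LL_{r+1}^2(\Twf)\cap \mathring{V}_{r+1}^2(\Twf)$.

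I then set $\gamma := \theta - \dive\psi \in V_r^3(\Twf)$ and check that $\gamma \in \VV_r^3(\Twf)$. The crucial identity is $\dive\psi|_{F_i} = {\rm div}_{F_i}(\psi|_{F_i}) = {\rm div}_{F_i}\phi_i$, which holds because $\psi$ is tangential to $F_i$ on $K_i$: the $\partial_{n_i}$-derivative of the vanishing normal component drops out, and derivatives in directions tangent to $F_i$ commute with the trace. Consequently $\gamma|_{F_i} = c_i$ is constant and thus continuous on every macro-face, placing $\gamma$ in $\VV_r^3(\Twf)$. The full family $\mu^s\theta = \dive(\mu^s\psi) + \mu^s\gamma$ then follows at once from $\theta = \dive\psi + \gamma$ combined with $\grad\mu\cdot\psi = 0$.

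The main subtlety is reconciling the face-wise tangential extension with global continuity of $\psi$: it is essential that $\phi_i$ be chosen in $\mathring{\LL}_{r+1}^1(F_i^{\rm ct})$, i.e., vanishing on $\partial F_i$, because only then does Lemma \ref{lemmaaux861} furnish a support-in-$K_i$ extension that glues by zero across the internal faces of $\Ta$ bordering $K_i$ without destroying either the tangential constraint or the $\LL_{r+1}^2(\Twf)$ regularity.
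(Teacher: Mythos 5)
Your proposal is correct and follows essentially the same route as the paper's proof: the same constants $c_i=|F_i|^{-1}\int_{F_i}\theta$, the same application of the Clough--Tocher sequence from Corollary \ref{cor:rotdiv} to solve $\divFi \phi_i = \theta|_{F_i}-c_i$ with $\phi_i$ vanishing on $\p F_i$, the same extension via Lemma \ref{lemmaaux861}, and the same observation that $\grad\mu\cdot\psi=0$ on $T$ makes the identity hold for all $s\ge 0$ at once. No gaps.
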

\begin{proof}  
Let $K_i \in \revj{\Ta}$ be the tetrahedron containing the face $F_i \in \Delta_2(T)$, and let \MJN{$\kappa_i \in V_0^3(\Ta) \subset V_0^3(\Twf)$} be defined on $K_i$ as $\kappa_i = \frac{1}{|F_i|}\int_{F_i} \theta\MJN{\,dA}$. Then on $F_i$,
\begin{align*}
\int_{F_i}(\theta - \kappa_i)\MJN{\,dA} = 0,
\end{align*}
so $(\theta-\kappa_i)_{F_i} \in \mathring{V}_r^2(\Fct_i)$ by definition. Hence, by Corollary \ref{cor:rotdiv}, there exists a function $\rho_i \in \mathring{\LL}_{r+1}^1(\Fct_i)$ such that
\begin{alignat}{1}
	\divFi \rho_i  = (\theta-\kappa_i) \quad \text{ on }{F_i}. \label{thetakappa}
\end{alignat}

\revj{By Lemma \ref{lemmaaux861}}, there exists an extension $\psi_i \in \LL_{r+1}^{{2}}(\Twf)$ 
such that $\psi_i\revj{|_{F_i}}=\rho_i$,  $\text{supp}(\psi_i) \subseteq K_i$, and $\psi_i \cdot n_{F_i}=0 $ on $K_i$. We then define $\psi = \sum_{i=0}^3 \psi_i \in  \LL_{r+1}^{{2}}(\Twf)  \cap \mathring{V}_{r+1}^2(\Twf)$. The construction of $\psi$, and using \eqref{divid},  
yields the identities 
\begin{alignat}{2}
\psi \cdot n_{F_i} =&  0 \qquad &&\text{on } K_i, \label{eqn:psinormal}\\
\dive \psi=& \divFi\rho_i\qquad &&{\text{on }F_i}.  \label{eqn:dive}
\end{alignat}
Now  set $\gamma:= \theta-\dive \psi$, so that $\gamma=\kappa_i$ on $F_i$ by \eqref{eqn:dive} and \eqref{thetakappa}. Since $\kappa_i$ is continuous on $F_i$, it follows that $\gamma \in \VV_r^3(\Twf)$. Rearranging \MJN{terms} yields $\theta = \dive \psi + \gamma$, which proves the result in the case $s = 0$. Furthermore, since $\grad \mu$ is parallel to $n_{F_i}$ on each $K_i$, we have by \eqref{eqn:psinormal},
\begin{align*}
	\mu^s \theta - \dive(\mu^s \psi) &= \mu^s \theta - \mu^s \dive \psi - s \mu^{s-1}\psi \cdot \grad \mu= \mu^s \gamma,
\end{align*}
which is the desired result.
\end{proof}

\begin{lemma}\label{lemma3}
Let $q \in \VV_r^3(\Twf)$ with $r \geq 1$, {and $s\ge 0$}. Then there exists $v \in \LL_r^2(\Twf)$ and $p \in \VV_{r-1}^3(\Twf)$ such that $\mu^s q = \dive (\mu^{s+1}v) + \mu^{s+1} p$. 
\end{lemma}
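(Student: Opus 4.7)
\textbf{Proof plan for Lemma \ref{lemma3}.}
The statement upgrades the conclusion of Lemma \ref{lemma1} by replacing the remainder $g \in V^3_{r-1}(\Twf)$ with a face-continuous remainder $p \in \VV^3_{r-1}(\Twf)$. The plan is to apply Lemma \ref{lemma1} first to obtain a decomposition with a (possibly face-discontinuous) remainder, and then use Lemma \ref{lemma2} to correct that remainder into $\VV^3_{r-1}(\Twf)$ at the cost of modifying the vector field by a divergence term that can be absorbed. This is a simple ``two-step cleanup'' with no new geometric input.

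Concretely, given $q \in \VV^3_r(\Twf)$ with $r\ge 1$ and $s\ge 0$, first invoke Lemma \ref{lemma1} to produce $w \in \LL^2_r(\Twf)$ and $g \in V^3_{r-1}(\Twf)$ such that
\begin{equation*}
\mu^s q = \dive(\mu^{s+1} w) + \mu^{s+1} g.
\end{equation*}
Next apply Lemma \ref{lemma2} to $\theta := g \in V^3_{r-1}(\Twf)$ with the parameter $s$ in that lemma replaced by $s+1$. This yields $\widetilde{\psi} \in \LL^2_{r}(\Twf) \cap \mathring{V}^2_{r}(\Twf)$ and $\gamma \in \VV^3_{r-1}(\Twf)$ with
\begin{equation*}
\mu^{s+1} g = \dive(\mu^{s+1}\widetilde{\psi}) + \mu^{s+1}\gamma.
\end{equation*}

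Substituting the second identity into the first gives
\begin{equation*}
\mu^s q = \dive\bigl(\mu^{s+1}(w+\widetilde{\psi})\bigr) + \mu^{s+1}\gamma,
\end{equation*}
so the choices $v := w + \widetilde{\psi} \in \LL^2_r(\Twf)$ and $p := \gamma \in \VV^3_{r-1}(\Twf)$ deliver the lemma. There is no real obstacle here beyond verifying that the polynomial degrees match up, which they do: Lemma \ref{lemma2} applied to $V^3_{r-1}(\Twf)$ outputs $\widetilde{\psi}$ in $\LL^2_{r}(\Twf)$, exactly the space containing $w$, so the sum $w+\widetilde{\psi}$ sits in $\LL^2_r(\Twf)$ as required.
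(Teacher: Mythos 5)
Your proposal is correct and is essentially identical to the paper's proof: both first invoke Lemma \ref{lemma1} to get the decomposition with remainder $g\in V^3_{r-1}(\Twf)$, then apply Lemma \ref{lemma2} (with its exponent parameter set to $s+1$) to rewrite $\mu^{s+1}g$ with a face-continuous remainder in $\VV^3_{r-1}(\Twf)$, and finally absorb the two vector fields into $v=w+\psi\in\LL^2_r(\Twf)$. The degree bookkeeping you check is exactly right.
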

\begin{proof} 
By Lemma \ref{lemma1}, there exist $w \in \LL_r^2(\Twf)$ and $g \in V_{r-1}^3(\Twf)$ such that
\begin{align*}
\mu^s q &= \dive(\mu^{s+1}w) + \mu^{s+1} g. 
\end{align*}
Since $g \in V_{r-1}^3(\Twf)$, Lemma \ref{lemma2} yields {the existence of }$\psi \in \LL_r^2(\Twf)$ and $p \in \VV_{r-1}^3(\Twf)$ such that
\begin{align*}
\mu^{s+1} g = \dive (\mu^{s+1} \psi) + \mu^{s+1} p.
\end{align*}
Therefore, $\mu^s q = \dive(\mu^{s+1}(w+\psi)) + \mu^{s+1}p$. Setting $v = w+\psi$ achieves the desired result.
\end{proof}

The final preliminary lemma follows from a  result shown in \cite{FuGuzman}. 
\begin{lemma}\label{lemma4}
Let $s \geq 0$, and let $q \in \VV_0^3(\Twf)$ with $\int_T \mu^s q\MJN{\,dx} = 0$. Then there exists $w \in \LL_0^2(\Twf)$ such that $\mu^s q = \dive(\mu^{s+1}w)$.
\end{lemma}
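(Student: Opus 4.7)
The plan is to reduce the statement to a finite-dimensional linear algebra problem, and then verify solvability via a compatibility identity derived from the divergence theorem.

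First, I would unpack the finite element spaces at this lowest order. Any $q \in \VV_0^3(\Twf)$ is piecewise constant on the twelve sub-tetrahedra of $\Twf$, and continuity on each face $F_i \in \Delta_2(T)$ forces the three constants on the sub-tetrahedra in $\Kwf_i$ to agree (otherwise $q$ would jump across the interior edges of $\Fct_i$). Hence $q|_{K_i} = q_i$ for scalars $q_i$, $i=0,\ldots,3$, and $\VV_0^3(\Twf)$ is isomorphic to the space of piecewise constants on the Alfeld split $\Ta$. Likewise, $\LL_0^2(\Twf) = [\pol_0(\Twf)\cap H^1(T)]^3 = \R^3$, since piecewise-constant vector fields that are globally continuous must be constant. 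For $w\in\R^3$,
\[
\dive(\mu^{s+1}w) = (s+1)\mu^s\, w\cdot\grad\mu,
\]
and on $K_i$ this equals $(s+1)\mu_i^s\, w\cdot\grad\mu_i$ with $\grad\mu_i$ a constant vector (since $\mu_i$ is affine). Thus the identity $\mu^s q = \dive(\mu^{s+1}w)$ reduces to the $4\times 3$ linear system
\[
(s+1)\, w\cdot\grad\mu_i = q_i, \qquad i=0,1,2,3.
\]

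Next, I would derive the key compatibility relation. Setting $c_i := \int_{K_i}\mu^s\,dx > 0$, the hypothesis $\int_T\mu^s q\,dx = 0$ reads $\sum_i c_i q_i = 0$. Since $\mu$ vanishes on $\p T$, the divergence theorem yields
\[
0 = \int_{\p T}\mu^{s+1} n\,dA = \int_T \grad(\mu^{s+1})\,dx = (s+1)\sum_{i=0}^3 c_i\,\grad\mu_i.
\]
Each $\grad\mu_i$ is parallel to the inward normal of $F_i$, and since any three faces of a non-degenerate tetrahedron meet at a single vertex (not along a line), their normals are linearly independent; thus $\{\grad\mu_i\}_{i=0}^3$ spans $\R^3$, and the kernel of $(a_i)\mapsto \sum_i a_i\grad\mu_i$ is one-dimensional, spanned by $(c_0,\ldots,c_3)$. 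By duality, the range of the map $\R^3\ni w\mapsto (w\cdot\grad\mu_i)_i\in\R^4$ is exactly $(c_0,\ldots,c_3)^\perp$, so the hypothesis places $(q_0,\ldots,q_3)$ in the range and produces the required $w$.

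The essential content lies in the space identification and the divergence-theorem identity; once both are established, the linear algebra is immediate. In effect, this reduction shows the lemma is equivalent to the analogous surjectivity statement on the Alfeld split proved in \cite{FuGuzman}, which is presumably the route the authors invoke.
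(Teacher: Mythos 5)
Your proposal is correct and rests on the same key observation as the paper's proof: the face-continuity in the definition of $\VV_0^3(\Twf)$ forces $q$ to be constant on each Alfeld tetrahedron $K_i$, so the problem reduces to the Alfeld split. The paper then simply cites \cite[Lemma 3.11]{FuGuzman} for the surjectivity, whereas you open that black box and prove it directly via the $4\times 3$ linear system, the divergence-theorem identity $\sum_i c_i\grad\mu_i=0$, and the duality $\range(A^{T})=(\kernell A)^\perp$ — a clean, self-contained verification of the cited result, but not a different route.
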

\begin{proof}
 Since $q \in \VV_0^3(\Twf)$ it is easy to see that $q \in V_0^3(\Ta)$. From  \cite[Lemma 3.11]{FuGuzman},
 there exists $w \in [\pol_{0}(T)]^3 \subset \LL_0^2(\Twf)$ such that $\dive (\mu^{s+1} w)= \revj{\mu^s} q$. 
\end{proof}

We can now prove {Theorem \ref{divthm} parts (i) and (ii).}
\begin{proof}[Proof of Theorem \ref{divthm}, part (i)]
\revjtwo{The case $r=0$ follows immediately from Lemma \ref{lemma4} with $s=0$. Now consider the case $r \ge 1$.  Let  $0 \le j \le r-1$ and assume that we have found $w_r, w_{r-1}, \ldots, w_{r-j}$ with $w_{\ell} \in \LL_{\ell}^2(\Twf)$  and $p_{r-j} \in \VV_{r-j}^3(\Twf)$ such that
\begin{alignat*}{1}
p=&\dive(\mu w_r + \mu^2 w_{r-1} + \cdots + \mu^{j+1} w_{r-j})+ \mu^{j+1}p_{r-(j+1)}.
\end{alignat*}
If $0\le j <r-1$ then we apply Lemma \ref{lemma3} to find $w_{r-(j+1)} \in \LL_{r-(j+1)}^2(\Twf)$ and $p_{r-(j+2)} \in {\VV_{r-(j+2)}^3(\Twf)}$ such that \begin{alignat*}{1}
\mu^{j+1}p_{r-(j+1)}=\dive( \mu^{j+2} w_{r-(j+1)})+\mu^{j+2}p_{r-(j+2)}.
\end{alignat*}
In which case we obtain
\begin{alignat*}{1}
p=&\dive(\mu w_r + \mu^2 w_{r-1} + \cdots + \mu^{j+2} w_{r-(j+1)})+ \mu^{j+2}p_{r-(j+2)}.
\end{alignat*}
After taking care of the base case $j=0$, and continuining by induction we arrive at
\begin{align*}
p=\dive (\mu w_r + \mu^2 w_{r-1} + \cdots + \mu^r w_1) +\mu^r p_0.
\end{align*}
By the hypothesis $\int_T p \AL{\,dx} =0$, there holds $\int_T \mu^r p_0\MJN{\,dx} = 0$. By Lemma \ref{lemma4}, there exists $w_0 \in \LL_0^2(\Twf)$ such that $\dive(\mu^{r+1}w_0) = \mu^r p_0$. The result follows by setting $v = \mu w_r + \mu^2 w_{r-1} + \cdots + \mu^r w_1 + \mu^{r+1} w_0$.}

\end{proof}

\begin{proof}[Proof of Theorem \ref{divthm}, part (ii)]
By Lemma \ref{lemma2} (with $s=0$), there exists  $\psi \in \LL_{r+1}^2(\Twf) \cap \mathring{V}_{r+1}^2(\Twf)$ and $\gamma \in \VV_r^3(\Twf)$ satisfying
\begin{equation*}
p= \dive \psi + \gamma.
\end{equation*}
Note that  $\int_{T} p \AL{\,dx} =0$, and \MJN{$\int_{T} \dive \psi\, dx =\int_{\partial T} \psi \cdot n\, dA=0$} since $\psi \cdot n=0$ on $\partial T$. Thus,  we have that $\int_{T} \gamma\MJN{\, dx}=0$ which implies $\gamma \in \mathring{\VV}_r^3(\Twf)$. Therefore, we  apply part (i) of Theorem \ref{divthm} to find $g \in \mathring{\LL}_{r+1}(\Twf)$ such that $\dive g=\gamma$. The  result follows by setting $v=\psi+g$.
\end{proof}

We now prove parts (iii) and (iv) of Theorem \ref{divthm}, which are corollaries to parts (i) and (ii).  
\begin{proof}[Proof of Theorem \ref{divthm}, part (iii)]
We decompose $p=(p-\overline{p})+ \overline{p}$ where $\overline{p}:=\frac{1}{|T|} \int_T p\MJN{\,dx}$. There exists $w \in [\pol_1(T)]^3 $ such that $\dive w= \overline{p}$, and by part (ii) of Theorem \ref{divthm} we have $\psi \in \LL_{r+1}^2(\Twf) \cap \mathring{V}_{r+1}^2(\Twf)$ such that $\dive \psi=p-\overline{p}$. Thus, setting $v:=\psi+w$ completes the proof. 
\end{proof}

\begin{proof}[Proof of Theorem \ref{divthm}, part (iv)] 
Let $p \in {\mathring{\LL}_r^3(\Twf)} \subset \mathring{\VV}_r^3(\Twf)$. Applying part (i) of Theorem \ref{divthm}, we 
find $v \in {\mathring{\LL}_{r+1}^2(\Twf)}$ such that $\dive v = p$. But clearly $v \in \mathring{S}_{r+1}^2(\Twf)$, since $\dive v$ \revj{belongs to $\mathring{\LL}_r^3(\Twf)$}. 
\end{proof}

\subsection{Surjectivity of the curl operator on discrete local spaces}
The main goal of this section is to derive the analagous results
of Section \ref{sec-sur-local-div}, but for the curl operator; that is,
we show \AL{that} the curl operator acting on piecewise polynomial spaces
with respect to the Worsey--Farin split is surjective onto spaces
of divergence--free functions.  The main result of this section is the following. 



\begin{theorem}\label{curlthm}
Let $r \ge 0$. Then:
\begin{enumerate}[align=left, leftmargin=0pt, labelindent=\parindent]
\item[(i)] for any $v \in \mathring{\VV}_r^2(\Twf)$ satisfying $\dive v = 0$ there exists $w \in \mathring{\LL}_{r+1}^1(\Twf)$ satisfying $\curl w = v$.\smallskip
\item[(ii)] let $v \in V_r^2(\Twf)$ with $\dive v = 0$. Then there exists $w \in {\LL_{r+1}^1(\Twf)}$ such that $\curl w = v$.\smallskip
\item[(iii)]  for each $v \in {\mathring{\LL}_r^2(\Twf)}$ (resp., $v \in {\LL_r^2(\Twf)}$) \MJN{with} $\dive v = 0$, there exists a $w \in \mathring{S}_{r+1}^1(\Twf)$ (resp., $w \in S_{r+1}^1(\Twf)$) such that $\curl w = v$. \smallskip
\item[(iv)]  for each $v \in \mathring{S}_r^2(\Twf)$ (resp., $v \in S_r^2(\Twf)$) \MJN{with} $\dive v = 0$, there exists $w \in \mathring{S}_{r+1}^1(\Twf)$ 
\MJN{(resp., $w \in S_{r+1}^1(\Twf)$)} such that $\curl w = v$.
\end{enumerate}
\end{theorem}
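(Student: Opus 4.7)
The plan is to mirror the proof of Theorem \ref{divthm}, constructing the potential $w$ via an iterative expansion in powers of the bubble function $\mu$. Since $\mu|_{\p T}=0$, seeking $w = \mu w_r + \mu^2 w_{r-1} + \cdots + \mu^{r+1} w_0$ with $w_j \in \LL_j^1(\Twf)$ automatically yields $w \in \mathring{\LL}_{r+1}^1(\Twf)$ for part (i). The leading coefficient $w_r$ is chosen so that $v - \curl(\mu w_r)$ is divisible by $\mu$, and each subsequent term peels off one more factor of $\mu$ from the residual.

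The iteration requires curl analogs of Lemmas \ref{lemma1}--\ref{lemma4}. The first would assert that, for any divergence-free $v \in \VV_r^2(\Twf)$ and $s\ge 0$, there exist $w \in \LL_r^1(\Twf)$ and a divergence-free $g \in V_{r-1}^2(\Twf)$ with $\mu^s v = \curl(\mu^{s+1} w) + \mu^{s+1} g$; its proof proceeds face-by-face, using that $\grad\mu|_{K_i}$ is parallel to $n_{F_i}$ to reduce boundary matching on $F_i$ to pinning down the tangential trace of $w_r$, and using the hypothesis that $v\times n|_{F_i}$ is continuous on $F_i$ (i.e., $v\in\VV_r^2$) together with Lagrange degrees of freedom on $\Twf$ to produce a Lagrange-conforming lift. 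A second lemma analogous to Lemma \ref{lemma2} promotes a generic divergence-free $v\in V_r^2(\Twf)$ to one in $\VV_r^2(\Twf)$ by subtracting the curl of a Lagrange potential built face-wise from Corollary \ref{cor:rotdiv}. Combining these yields an analog of Lemma \ref{lemma3}, and a base case analogous to Lemma \ref{lemma4} for $\VV_0^2(\Twf)$ terminates the iteration, completing part (i). Part (ii) is then reduced to (i) by first subtracting a corrector $\tilde w \in \LL_{r+1}^1(\Twf)$ whose curl matches the normal trace of $v$ on $\p T$ (obtained face-by-face from Theorem \ref{2dseqs}, with compatibility along edges of $\p T$ following from $\dive v = 0$) and then applying the second lemma to land in $\mathring{\VV}_r^2(\Twf)$.

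Parts (iii) and (iv) will be immediate corollaries: if $v\in\mathring{\LL}_r^2(\Twf)$ (resp.\ $v\in\LL_r^2(\Twf)$) is divergence-free, then $v\in\mathring{\VV}_r^2(\Twf)$ (resp.\ $v\in V_r^2(\Twf)$), so (i) or (ii) gives a Lagrange potential $w$ with $\curl w = v\in\LL_r^2(\Twf)$, which places $w$ in $\mathring{S}_{r+1}^1(\Twf)$ (resp.\ $S_{r+1}^1(\Twf)$) by the very definition of $S^1$; part (iv) is then contained in (iii) since $\mathring{S}_r^2(\Twf)\subset\mathring{\LL}_r^2(\Twf)$ (likewise without the ring).

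The main obstacle I anticipate is the analog of Lemma \ref{lemma1}. In the divergence case the alignment $\grad\mu\parallel n_{F_i}$ reduces boundary matching to a scalar equation, but the curl engages both tangential components of $v$ on each face, so the face-wise solve must simultaneously specify two components of $w_r|_{F_i}$ while being consistent across shared edges of $\p T$ and extendible to a globally Lagrange-conforming function on $\Twf$. The hypothesis $v\in\VV_r^2(\Twf)$ is exactly what makes $v\times n|_{F_i}$ a single-valued, edge-compatible piecewise polynomial on $\Fct_i$, so that this two-component face matching can be carried out without the further promotion step needed in part (ii).
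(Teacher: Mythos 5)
Your proposal follows essentially the same route as the paper: an iterative $\mu$-power expansion driven by a face-matching lemma (the paper's Lemma \ref{lem:vwg713}, built on the decomposition $v=\grad\mu\times z+\mu\gamma$ and the single-valuedness of $z\cdot t$ on edges), a promotion lemma lifting residuals from $\mathring{V}^2$ into $\mathring{\VV}^2$ (Lemma \ref{lem10}), a constant-degree base case (Lemma \ref{lemmaextra}), and a normal-trace corrector via the lowest-order Raviart--Thomas projection and the two-dimensional Clough--Tocher sequences for part (ii), with (iii) and (iv) as immediate corollaries. The only cosmetic discrepancy is that your first lemma should be stated for $v\in\mathring{\VV}_r^2(\Twf)$ (vanishing normal trace is what enables the decomposition $v=\grad\mu\times z+\mu\gamma$), which is how it is used in your iteration anyway.
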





We omit the proofs of parts (iii) and (iv) of Theorem \ref{curlthm} 
since they easily follow  from parts (i) and (ii) of the same theorem. 

Before we prove parts (i) and (ii) of Theorem \ref{curlthm}, 
we first establish several lemmas. 
\begin{lemma}\label{curlProp}
Let $r \ge 0$ and let $v \in \mathring{\VV}_r^2(\Twf)$. Then there exist functions $z \in [\pol_r(\Twf)]^3$ and $\gamma \in [\pol_{r-1}(\Twf)]^3$ such that
\begin{align}\label{eqn:curlidentity}
	v = \grad \mu \times z + \mu \gamma,
\end{align}
and so  $ \grad \mu \times z$ is continuous on $F$ for each $F \in \Delta_2(T)$. \AL{Moreover, for each $e \in \Delta_1(T)$ and its unit tangent vector $t$, $z \cdot t$ is single-valued on $e$.}
\end{lemma}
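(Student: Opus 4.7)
The plan is to construct $z$ by prescribing its tangential trace on the boundary $\p T$ so that $v=\grad\mu\times z$ holds on $\p T$ (where $\mu$ vanishes), and then extending polynomially into each tetrahedron of $\Twf$. On each Alfeld tetrahedron $K_i \in \Ta$ one has $\grad\mu|_{K_i} = -c_i n_i$, with $n_i$ the outward unit normal of $F_i$ and $c_i = 1/d_i$ where $d_i$ is the distance from $z_T$ to $F_i$. Because $z_T$ is the \emph{incenter} of $T$, the constants satisfy $c_0=c_1=c_2=c_3=:c$, a fact that will be essential in the edge-continuity step. The $\mathring{\VV}$ condition $v\cdot n_i = 0$ together with the identity $-n_i\times(n_i\times w)=w$ for tangential $w$ shows that the required relation $v|_{F_i} = \grad\mu\times z|_{F_i}$ forces the tangential component of $z$ on $F_i$ to equal $\xi_i := \frac{1}{c_i}\,n_i\times v|_{F_i}$; the $\VV$ hypothesis guarantees that $\xi_i$ is continuous on $F_i$.

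Next, for each sub-tetrahedron $S = [z_T, m_{F_i}, x_k, x_\ell] \in \Twf$ contained in $K_i$, I would define $z|_S$ to be any polynomial of degree $r$ on $S$ whose tangential trace on the boundary face $F' := [m_{F_i}, x_k, x_\ell] \subset F_i$ agrees with $\xi_i|_{F'}$ (for instance, extend the tangential vector field polynomially in the direction normal to $F_i$, with zero normal component on $F'$). Then $v - \grad\mu\times z$ is a vector polynomial of degree $r$ on $S$ that vanishes on $F'$. Since $\mu|_S$ is linear on $S$ and vanishes precisely on $F'$, we may factor $(v-\grad\mu\times z)|_S = \mu|_S\,\gamma|_S$ for some $\gamma|_S \in [\pol_{r-1}(S)]^3$. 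Patching over $S \in \Twf$ yields $\gamma \in [\pol_{r-1}(\Twf)]^3$ together with the decomposition $v = \grad\mu\times z+\mu\gamma$. Continuity of $\grad\mu\times z$ on each $F \in \Delta_2(T)$ is then immediate, since $\grad\mu\times z|_F = v|_F$ and $v|_F$ is continuous across the Clough--Tocher split of $F$ by the $\VV$ hypothesis.

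The main obstacle is verifying that $z \cdot t$ is single-valued on each edge $e = [x_k, x_\ell] \in \Delta_1(T)$. A combinatorial check of the Worsey--Farin split shows that $e$ is contained in exactly two sub-tetrahedra $S_1 \subset K_i$ and $S_2 \subset K_j$, where $\{i,j\} = \{0,1,2,3\}\setminus\{k,\ell\}$, and that these share the interior face $\Phi := [z_T, x_k, x_\ell]$. By construction, $(z|_{S_1}\cdot t)|_e = \xi_i \cdot t|_e = -\frac{1}{c_i}\,v|_{S_1} \cdot (n_i\times t)|_e$, with the analogous formula from $S_2$. Writing $v$ at a point $p \in e$ in the orthonormal frames $\{t, n_i \times t, n_i\}$ and $\{t, n_j \times t, n_j\}$ (using $v \cdot n_i = v \cdot n_j = 0$ from $\mathring{\VV}$), and invoking the $H(\dive;T)$-continuity of $v$ across $\Phi$, a direct calculation shows that the $(n_i\times t)$-component of $v|_{S_1}$ equals the $(n_j\times t)$-component of $v|_{S_2}$ at $p$, provided the two vectors $n_i\times t$ and $n_j\times t$ have equal projections onto the unit normal of $\Phi$. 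Both of these ingredients come from the incenter property: $z_T$ equidistant from $F_i$ and $F_j$ lies on the bisector plane of the dihedral angle at $e$, so $\Phi$ coincides with the bisector plane (yielding the projection identity), while the same equidistance gives $c_i = c_j$. Together these force $(z\cdot t)|_{S_1,e} = (z\cdot t)|_{S_2,e}$; the incenter hypothesis is used twice and cannot be dropped.
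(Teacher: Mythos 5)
Your proposal is correct. The paper's own proof of this lemma is essentially a citation---it invokes \cite[Lemmas 4.1 and 4.2]{FuGuzman} for the decomposition $v=\grad\mu\times z+\mu\gamma$ and for the single-valuedness of $z\cdot t$---and your argument is a faithful, self-contained reconstruction of exactly that strategy, adapted to the Worsey--Farin setting: you prescribe the tangential trace of $z$ on $\p T$ as $\tfrac{1}{c}\,n\times v$ (which is well defined on each Clough--Tocher face precisely because of the $\VV$-continuity of $v\times n$), factor out $\mu$, and then use the incenter/bisector-plane geometry of the interior face $[z_T,x_k,x_\ell]$ together with the $H(\dive;T)$-continuity of $v$ across it to match $z\cdot t$ from the two sub-tetrahedra meeting an original edge.
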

\begin{proof}
By \cite[Lemma 4.1]{FuGuzman}, there exists  
$z \in [\pol_r(\Twf)]^3$ and $\gamma \in [\pol_{r-1}(\Twf)]^3$ such that \eqref{eqn:curlidentity} holds. For each $F \in \Delta_2(T)$,
there holds $v=  \grad \mu \times z $ on $F$, and hence $ \grad \mu \times z$  is continuous on $F$. Following exactly the proof of \cite[Lemma 4.2]{FuGuzman},
we see that $z \cdot t$ is single-valued for all $e \in \Delta_1(T)$. 
\end{proof}

\begin{lemma}\label{lem:vwg713}
For any $v \in \mathring{\VV}_r^2(\Twf)$, with $r \ge 1$, and any integer $s \geq 0$, there exists $w \in \LL_r^1(\Twf)$ and $g \in V_{r-1}^2(\Twf)$ such that
\begin{align}\label{vwg713}
\mu^s v &= \curl (\mu^{s+1} w) + \mu^{s+1} g.
\end{align}
\end{lemma}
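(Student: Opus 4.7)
The plan is to mimic the proof of Lemma~\ref{lemma1}, replacing the divergence/normal-trace constructions with curl/tangential-trace ones. Expanding the curl,
\[
\curl(\mu^{s+1}w) = (s+1)\mu^s\,\grad\mu\times w + \mu^{s+1}\curl w,
\]
equation \eqref{vwg713} becomes $v-(s+1)\grad\mu\times w=\mu(g+\curl w)$. Since $\mu|_{\p T}=0$, the task reduces to constructing $w\in\LL_r^1(\Twf)$ whose tangential trace on each face $F_i$ matches a prescribed value; $g$ is then defined by
\[
g := \frac{v-(s+1)\grad\mu\times w}{\mu}-\curl w.
\]

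To identify the prescribed tangential trace, I apply Lemma~\ref{curlProp} to write $v=\grad\mu\times z+\mu\gamma$ with $z\in[\pol_r(\Twf)]^3$ and $\gamma\in[\pol_{r-1}(\Twf)]^3$. On each face $F_i$, the vanishing of $\mu$ gives $v|_{F_i}=\grad\mu_i\times z|_{F_i}$, so it is enough that the tangential part of $w$ on $F_i$ equal $(s+1)^{-1}$ times the tangential part of $z$ there. Lemma~\ref{curlProp} provides the two compatibilities needed to realize such data as a continuous Lagrange field: (i) the tangential part of $z$ on each $F_i$ is continuous, and (ii) $z\cdot t$ is single-valued on each edge $e\in\Delta_1(T)$.

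The construction of $w$ then follows the template of Lemma~\ref{lemma1}. For each face $F_i$, pick a tangential polynomial $b_i\in[\pol_r(F_i)]^2$ matching the tangential part of $z$ on $\p F_i$; the required edge-compatibility of the four boundary data is exactly the single-valuedness of $z\cdot t$ supplied by Lemma~\ref{curlProp}. Using second-kind $H(\curl)$-conforming (tangential) degrees of freedom on $T$, select $w_1\in[\pol_r(T)]^3$ whose tangential trace on $F_i$ equals $b_i/(s+1)$. The face-interior remainder (the tangential part of $z$ on $F_i$ minus $b_i$) lies in $\mathring{\LL}_r^1(\Fct_i)$ and, by Lemma~\ref{lemmaaux861}, extends to $\tilde a_i\in\LL_r^1(\Twf)$ with $\supp \tilde a_i\subset K_i$ and $\tilde a_i\cdot n_i=0$ on $K_i$. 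Setting $w:=w_1+\frac{1}{s+1}\sum_{i=0}^3 \tilde a_i\in\LL_r^1(\Twf)$ yields a continuous field with the prescribed tangential trace on $\p T$, so $v-(s+1)\grad\mu\times w$ vanishes on $\p T$.

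It remains to verify that $g\in V_{r-1}^2(\Twf)$. Each sub-tet $S\in\Twf$ has exactly one face on $\p T$, and $\mu|_S$ is the linear polynomial vanishing there; since $v-(s+1)\grad\mu\times w$ also vanishes on that face, division by $\mu|_S$ yields a polynomial of degree $r-1$ on $S$, making $g$ piecewise polynomial of the stated degree. For $H(\dive)$-conformity, rewrite the identity as $\mu^{s+1}g=\mu^s v-\curl(\mu^{s+1}w)\in H(\dive, T)$; across any interior face $F$ of $\Twf$, the continuous factor $\mu^{s+1}$ is strictly positive except on the lower-dimensional subset $F\cap\p T$, so the vanishing normal jump of $\mu^{s+1}g$ on $F$ forces the polynomial jump $\jmp{g\cdot n}$ to vanish identically on $F$. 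The principal obstacle is the Lagrange-continuous construction of $w$: in contrast to the scalar case of Lemma~\ref{lemma1}, where face data are automatically single-valued on edges, here the tangential vector data on adjacent faces must be compatible along shared edges of $T$, and this compatibility is furnished precisely by the two properties of $z$ in Lemma~\ref{curlProp}.
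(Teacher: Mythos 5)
Your proposal is correct and follows essentially the same route as the paper's proof: decompose $v=\grad\mu\times z+\mu\gamma$ via Lemma \ref{curlProp}, build $w$ by combining a N\'ed\'elec interpolant of boundary data $b_i$ with the Lemma \ref{lemmaaux861} extensions of $z_{F_i}-b_i$, and recover $g\cdot n$ single-valuedness from the positivity of $\mu$ in the interior of $T$. The only (harmless) difference is that you define $g$ directly by dividing $v-(s+1)\grad\mu\times w$ by $\mu$ on each sub-tetrahedron, whereas the paper introduces an intermediate $\phi\in[\pol_{r-1}(\Twf)]^3$ with $(s+1)\grad\mu\times w=\grad\mu\times z+\mu\phi$ and sets $g=-(\curl w-\gamma+\phi)$.
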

\begin{proof}
From Lemma \ref{curlProp},
there exists $z \in [\pol_r(\Twf)]^3$ and $\gamma \in [\pol_{r-1}(\Twf)]^3$ satisfying \eqref{eqn:curlidentity} with $z \times \grad \mu$  continuous on $F$ for each $F \in \Delta_2(T)$ and $z \cdot t$ is single-valued for all $e \in \Delta_1(T)$. Let $\{F_i\}_{i=0}^3$ be the four faces of $T$. For each $i$ we choose $b_i \in [\pol_r(F_i)]^2$ so that $b_i={z_{F_i}}$ on $\partial F_i$, which we are allowed to do since \MJN{$z \times \grad \mu$} \AL{is} continuous on $F_i$.  Since $z \cdot t$ is single\AL{-}valued for all $e \in \Delta_1(T)$, we have that \MJN{$b_i \cdot t|_e=b_j \cdot t|_e$} if $e =F_i \cap F_j$. Hence, using the curl-conforming N\'ed\'elec degrees of freedom of the second kind \cite{Nedelec86}, there exists $w_1 \in [\pol_r(T)]^3$ such that \AL{${(w_1)_{F_i}} =b_i \text{ on } F_i \text{ for } 0 \le i \le 3.$}

Since ${z_{F_i} - b_i }\in \mathring{\LL}_r^1(\Fct_i)$, according to Lemma \ref{lemmaaux861}, there exists $a_i \in \LL_r^1(\Twf)$ such that $\text{supp }(a_i) \subset K_i$ and ${(a_i)_{F_i}}  ={ z_{F_i} - b_i}$ on $F_i$. We set $w_2:=\sum_{i=0}^3 a_i$ and finally $w:=\frac{1}{s+1} (w_1+w_2) \in {\LL_r^1(\Twf)}$. Hence, 
\begin{alignat*}{1}
(s+1) { w_{F_i}} ={(w_1)_{F_i} +  ( w_2 )_{F_i}} = {b_i+ (a_i)_{F_i}} =z_{F_i} . 
\end{alignat*}
{From this we  deduce   $(s+1) \grad \mu \times w =  \grad \mu \times z$ on each $F_i$.}

Thus, \revj{there exists} $\phi \in [\pol_{r-1}(\Twf)]^3$ such that 
\begin{equation}
(s+1) \grad \mu \times w =  \grad \mu \times z+ \mu \phi =v +\mu ({\phi}-\gamma)\quad \text{ on } T.
\end{equation}

We write $\curl(\mu^{s+1} w)= (s+1)\mu^s \grad \mu \times w+ \mu^{s+1} \curl w=\mu^s v + \mu^{s+1} (\curl w -\gamma+\phi)$. Setting $g:=-(\curl w -\gamma+\phi)$, we have that \eqref{vwg713} holds.  Finally, since $\mu^s v \cdot n$ and $\curl(\mu^{s+1} w)\cdot n$   are single-valued on interior faces, $\mu^{s+1} g  \cdot n$ is 
single-valued. Because $\mu$ is continuous {and strictly positive in the interior of $T$}, this implies $g \cdot n$ is single-valued on interior faces, and thus $g \in V_{r-1}^2(\Twf)$. 
\end{proof}

\begin{lemma}\label{lem10}
Let $r \ge 0$  and $s \ge 0$ \MJN{be integers}. For any $g \in \mathring{V}_r^2(\Twf)$ there exists $\psi \in \LL_{r+1}^1(\Twf)$ and $\gamma \in \mathring{\VV}_r^2(\Twf)$ such that
\begin{align*}
	\mu^{s} g &= \curl (\mu^{s} \psi) + \mu^{s} \gamma.
\end{align*}
\end{lemma}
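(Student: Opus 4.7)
The proof mirrors that of Lemma \ref{lemma2}, with the curl operator taking the place of the divergence and with different 2D exact sequences playing the analogous role. For each face $F_i \in \Delta_2(T)$, I will decompose the trace $g|_{F_i}$ using the Clough--Tocher sequences \eqref{alt2dbdryseq1}--\eqref{alt2dbdryseq2}, and then lift the resulting 2D potentials to a function $\psi_i \in \LL_{r+1}^1(\Twf)$ supported in $K_i$ whose curl reproduces the tangential behavior of $g$ across $\Fct_i$.

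The key preliminary observation, which has no analogue in the Alfeld setting, is that although $g$ has only $H(\dive)$-conformity across interior $\Twf$-faces, its restriction to each boundary face $F_i$ is in fact divergence-conforming on the Clough--Tocher refinement of $F_i$:
\[
g|_{F_i} \in \mathring{V}_{{\rm div},r}^1(\Fct_i).
\]
Tangentiality to $F_i$ is immediate from $g\cdot n|_{\partial T}=0$. For normal continuity across Clough--Tocher edges, note that each edge $e=[m_{F_i},x_k]$ of $\Fct_i$ is contained in an interior face $F^\ast=[z_T,m_{F_i},x_k]\in \Delta_2(\Kwf_i)$; since $g\cdot n_{F^\ast}$ is single-valued on $F^\ast$, and the projection of $n_{F^\ast}$ onto the tangent plane of $F_i$ is parallel to the in-face normal to $e$ (because $n_{F^\ast}\perp (x_k-m_{F_i})$), the component of $g|_{F_i}$ normal to $e$ within $F_i$ is single-valued across $e$. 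The vanishing trace on $\partial F_i$ follows because $g$ is forced to be parallel to every macro-edge of $T$, being orthogonal to two adjoining face normals.

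Invoking \eqref{alt2dbdryseq2}---noting $\int_{F_i}\dive_F g|_{F_i}\,dA=0$ by Stokes---yields $\kappa_i\in \mathring{\LL}_r^1(\Fct_i)$ with $\dive_F\kappa_i=\dive_F g|_{F_i}$; then \eqref{alt2dbdryseq1} applied to the div-free remainder produces $\phi_i\in \mathring{\LL}_{r+1}^0(\Fct_i)$ with $\rote_F\phi_i = g|_{F_i}-\kappa_i$. I lift these 2D data to 3D as
\[
\psi_i := \tilde\phi_i\, n_{F_i}+ c_i\, \mu\, \tilde\kappa_i,
\]
where $\tilde\phi_i \in \LL_{r+1}^0(\Twf)$ is a scalar extension of $\phi_i$ supported in $K_i$, $\tilde\kappa_i\in\LL_r^1(\Twf)$ is the tangential extension of $-n_{F_i}\times\kappa_i$ furnished by Lemma \ref{lemmaaux861}, and $c_i$ is the constant chosen so that $\partial_n(c_i\mu)|_{F_i}=1$. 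By design, $\psi_i \in \LL_{r+1}^1(\Twf)$, $\supp \psi_i\subseteq K_i$, $\psi_i|_{F_i}=\phi_i n_{F_i}$, and the tangential part of $\partial_n\psi_i|_{F_i}$ equals $-n_{F_i}\times\kappa_i$.

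Setting $\psi=\sum_i\psi_i$ and applying the direct identity
\[
(\curl\psi)_F|_{F_i}=\rote_F(\psi\cdot n|_{F_i})+ n_{F_i}\times(\partial_n\psi)|_{F_i},
\]
together with the vector triple product $n\times(n\times v)=-v$ for tangential $v$, verifies that $\curl\psi|_{F_i}=g|_{F_i}$, while $\curl\psi\cdot n|_{F_i}=\curl_F(\psi_F|_{F_i})=0$ (since $\psi_F|_{F_i}=0$). For the general $\mu^s$ factor, note that $\psi|_{F_i}$ is parallel to $n_{F_i}$ and that $\grad\mu|_{K_i}\parallel n_{F_i}$; hence $\grad\mu\times\psi$ vanishes on $\partial T$, and I may write $\grad\mu\times\psi=\mu\tilde\alpha$ for a piecewise polynomial $\tilde\alpha$. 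Expanding $\curl(\mu^s\psi)=\mu^s\curl\psi+s\mu^{s-1}\grad\mu\times\psi=\mu^s(\curl\psi+s\tilde\alpha)$ gives $\mu^s g-\curl(\mu^s\psi)=\mu^s\gamma$ with $\gamma:=g-\curl\psi-s\tilde\alpha$. A L'H\^opital-type computation at $F_i$ identifies the tangential trace of $\tilde\alpha|_{F_i}$ as $\kappa_i$, which is continuous on $\Fct_i$; combined with $\gamma\cdot n|_{F_i}=0$, this shows $\gamma\in \mathring{\VV}_r^2(\Twf)$. The principal technical challenge is the geometric observation above---uncovering the hidden divergence-conformity of $g|_{F_i}$ arising from the specific topology of the Worsey--Farin split---as this is what enables all subsequent steps to proceed in direct analogy with the divergence case.
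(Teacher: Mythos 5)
There is a genuine gap at the very first step. You claim that $g|_{F_i} \in \mathring{V}^1_{{\rm div},r}(\Fct_i)$, i.e., that the tangential trace $g_{F_i}$ has vanishing in-plane normal component on $\partial F_i$, arguing that $g$ is ``forced to be parallel to every macro-edge of $T$, being orthogonal to two adjoining face normals.'' That argument requires $g$ to be single-valued on a macro-edge $e = F_i\cap F_j$, but $g\in\mathring{V}_r^2(\Twf)$ is only $H({\rm div})$-conforming: the trace of $g$ seen from inside $K_i$ satisfies $g\cdot n_{F_i}=0$, while the (generally different) trace seen from inside $K_j$ satisfies $g\cdot n_{F_j}=0$; neither trace satisfies both conditions simultaneously, so $g_{F_i}$ need not be parallel to $e$. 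Consequently $\int_{F_i}{\rm div}_{F_i}\, g_{F_i}\,dA = \int_{\partial F_i}g_{F_i}\cdot\nu\,ds$ need not vanish, the boundary-condition sequences \eqref{alt2dbdryseq1}--\eqref{alt2dbdryseq2} cannot be invoked as you do, and --- more critically --- the resulting potentials $\kappa_i$ and $\phi_i$ would not vanish on $\partial F_i$, so they could not be extended to piecewise polynomials supported in $K_i$ as your lift requires. Everything downstream of the ``key preliminary observation'' depends on this false claim.

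The paper's proof supplies exactly the missing step: it first constructs a polynomial $p_i\in[\pol_r(F_i)]^2$ via the two-dimensional divergence-conforming N\'ed\'elec degrees of freedom so that $p_i$ matches the edge normal components of $g_{F_i}$ on $\partial F_i$; only the corrected field $g_{F_i}-p_i$ lies in $\mathring{V}^1_{{\rm div},r}(\Fct_i)$, and it is then split (via Corollary \ref{cor:rotdiv}) into a divergence part $m_i$ and a rotation part ${\rm rot}_{F_i}\,\kappa_i$. The continuous remainder $\theta_i=p_i+m_i$ is not reproduced by $\curl\psi$ at all; it is simply left inside $\gamma$, which is permitted because $\gamma$ only needs a \emph{continuous} tangential trace on each $\Fct_i$, not a vanishing one. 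This also lets the paper take $\psi$ purely normal on each $K_i$, so that $\grad\mu\times\psi\equiv0$ and the $\mu^s$ factor commutes trivially, avoiding your extra $\mu\tilde{\kappa}_i$ term and the bookkeeping for $\tilde{\alpha}$. Your observation that $g_{F_i}$ is ${\rm div}_{F_i}$-conforming across the interior Clough--Tocher edges is correct (it is the paper's Lemma \ref{lemmaVr0div}), but the boundary-trace assertion is false and must be replaced by the $p_i$ correction.
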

\begin{proof}
\revj{Let $\{F_i\}_{i=0}^3$ be the four faces of $T$ \MJN{so that} $g_{F_i} \in H({\rm div}_{F_i}; F_i)$ by Lemma \ref{lemmaVr0div}}.  We  use the (two-dimensional) divergence-conforming N\'ed\'elec degrees of freedom 
to construct $p_i \in  [\pol_r(F_i)]^2$ so that for $r \ge 1$,
\begin{equation*}
p_i \cdot (n_F \times t) = g_{F_i} \cdot  (n_F \times t)  \qquad \text{ on } e,  \forall e \in \Delta_1(F_i),
\end{equation*}
where $t$ is \AL{the unit vector} tangent to the edge $e$.  If $r=0$, we can satisfy the above equation for two of the three edges, however, on the third edge the equation will be \MJN{automatically satisfied} since $\divFi  (g_{F_i} -p_i)=0$. 

{Using  $g_{F_i} -p_i  \in \mathring{V}_{{\rm div}, r}^1(\Fct_i)$ we have that $\dive (g_{F_i}-p_i) \in \mathring{V}_{r-1}^2(\Fct_i)$}. By Corollary \ref{cor:rotdiv},
there exists $m_i \in \mathring{\LL}_r^1(\Fct_i)$ so that $\divFi m_i=\divFi(g_{F_i} -p_i)$ on $F_i$. Thus, if we let $\theta_i: =p_i+m_i $ we have   $\theta_i \in \LL_r^1(\Fct_i)$ and  $g_{F_i} -\theta_i \in  \mathring{V}_{{\rm div}, r}^1(\Fct_i)$ with $\divFi(g_{F_i}-\theta_i)=0$. By 
Corollary \ref{cor:rotdiv}, there exists $\kappa_i \in \mathring{\LL}_{r+1}^0(\Fct_i)$ such that $\text{rot}_{F_i} \kappa_i=g_{F_i} -\theta_i$.  Since $\kappa_i$ vanishes on $\partial F_i$ there exists  $ \beta_i \in \mathring{\LL}_{r+1}^0(\Twf)$ with $\text{supp }(\beta_i) \subset K_i$ such that $\beta_i=\kappa_i$ on $F_i$.  We let $\psi=\sum_{i=0}^3 \beta_i n_{F_i} \in \mathring{\LL}_{r+1}^1(\Twf)$. Note that this immediately implies that $\grad \mu \times \psi \equiv 0$ on $T$. Also, we  have that 
\begin{alignat*}{1}
\curl \psi=\grad \beta_i \times n_{F_i}= {\rm rot}_{F_i}\, \kappa_i= g_{F_i}-\theta_i \qquad \text{ on } F_i. 
\end{alignat*}
Setting $\gamma=g-\curl \psi$ we see that  $\gamma \in \mathring{V}_r^2(\Twf)$ . Moreover,  noting, in addition, to the above equation,  that $\curl \psi|_{F_i}=(\curl \psi)_{F_i}$ since {$\curl \psi \cdot n_{F_i}=0$} on $F_i$, we see that $\gamma_{F_i}=\theta_i \in \LL_r^1(\Fct_i)$ and, hence, $\gamma \in \mathring{\VV}_r^2(\Twf)$. Finally, since $\grad \mu \times \psi \equiv 0$ we have  $\curl(\mu^{s} \psi)=\mu^{s} \curl \psi=\mu^{s} (g-\gamma)$. 
\end{proof}

\begin{lemma}\label{step2}
Let $r \ge 1, s \ge 0$ \MJN{be integers.  Then }for any $v \in \mathring{\VV}_r^2(\Twf)$ such that $\dive(\mu^s v)=0$ on $T$ there exists $w \in \LL_r^1(\Twf)$ and $g \in \mathring{\VV}_{r-1}^2(\Twf)$ satisfying $\mu^s v = \curl (\mu^{s+1} w) + \mu^{s+1} g$.
\end{lemma}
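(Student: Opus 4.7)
The plan is to produce the decomposition in two steps: first invoke Lemma \ref{lem:vwg713} (with the same $s$) to obtain $w_0 \in \LL_r^1(\Twf)$ and $g_0 \in V_{r-1}^2(\Twf)$ satisfying
\[
\mu^s v = \curl(\mu^{s+1} w_0) + \mu^{s+1} g_0,
\]
and then apply Lemma \ref{lem10} to absorb $g_0$ into a new curl term plus a function in $\mathring{\VV}_{r-1}^2(\Twf)$. However, Lemma \ref{lem10} requires its input to lie in $\mathring{V}_{r-1}^2(\Twf)$, so the crux of the argument is to verify $g_0\cdot n=0$ on $\p T$. This is exactly where the hypothesis $\dive(\mu^s v)=0$ will be used, and I expect it to be the main obstacle.

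For this upgrade, note first that $\dive\circ\curl=0$ combined with the hypothesis gives
\[
\dive(\mu^{s+1} g_0) = \dive(\mu^s v) - \dive\bigl(\curl(\mu^{s+1} w_0)\bigr) = 0.
\]
Fix a boundary face $F_i\in\Delta_2(T)$, let $K_i\in\Ta$ be the adjacent Alfeld sub-tetrahedron, and recall that on $K_i$ the function $\mu = \mu_i$ is affine with $\mu_i|_{F_i}=0$ and $\grad\mu_i$ a nonzero constant vector parallel to $n_{F_i}$. On each $\Twf$-sub-simplex $K\subset K_i$ the equation $\dive(\mu_i^{s+1} g_0)=0$ holds pointwise (no distributional jumps arise because $g_0\in V_{r-1}^2(\Twf)$), and expanding via the product rule yields the polynomial identity
\[
\mu_i^s\bigl[(s+1)\grad\mu_i\cdot g_0 + \mu_i\dive g_0\bigr] = 0 \quad \text{on } K.
\]
Since $\mu_i$ is a nonzero polynomial on $K$, the bracketed expression vanishes identically on $K$. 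Restricting to $K\cap F_i$ and using $\mu_i|_{F_i}=0$ gives $(s+1)\grad\mu_i\cdot g_0 = 0$, and the proportionality of $\grad\mu_i$ to $n_{F_i}$ yields $g_0\cdot n_{F_i}=0$ on $K\cap F_i$. Ranging over all such $K$ and all boundary faces $F_i$ shows $g_0\in\mathring{V}_{r-1}^2(\Twf)$.

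With the upgrade in hand, Lemma \ref{lem10} applied with the substitutions $r\mapsto r-1$ and $s\mapsto s+1$ produces $\psi\in \LL_r^1(\Twf)$ and $\gamma\in\mathring{\VV}_{r-1}^2(\Twf)$ such that
\[
\mu^{s+1} g_0 = \curl(\mu^{s+1}\psi) + \mu^{s+1}\gamma.
\]
Substituting into the initial decomposition gives $\mu^s v = \curl\bigl(\mu^{s+1}(w_0+\psi)\bigr) + \mu^{s+1}\gamma$, so the proof concludes by setting $w := w_0+\psi\in \LL_r^1(\Twf)$ and $g := \gamma\in\mathring{\VV}_{r-1}^2(\Twf)$. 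The principal difficulty is the middle step: converting the global divergence-free hypothesis into pointwise vanishing of the normal trace of $g_0$ on each boundary face, which is achieved by the elementary product-rule computation on each Alfeld tetrahedron coupled with the parallelism of $\grad\mu_i$ to $n_{F_i}$.
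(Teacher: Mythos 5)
Your proposal is correct and follows essentially the same route as the paper's proof: apply Lemma \ref{lem:vwg713} to get the first decomposition, use $\dive(\mu^{s+1}g_0)=0$ together with the product rule and $\mu|_{\p T}=0$ to upgrade $g_0$ to $\mathring{V}_{r-1}^2(\Twf)$, and then invoke Lemma \ref{lem10} to split off the remaining curl term. The only difference is that you carry out the normal-trace argument sub-simplex by sub-simplex on each Alfeld tetrahedron, whereas the paper states the same computation globally on $T$; the content is identical.
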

\begin{proof}
By \MJN{Lemma \ref{lem:vwg713}, there exists} $w_1 \in \LL_r^1(\Twf)$ and ${g_1} \in V_{r-1}^2(\Twf)$  satisfying
\begin{align}
\mu^s v = \curl (\mu^{s+1} w_1) + \mu^{s+1} g_1.
\end{align}
By our hypothesis we have  $0=\dive(\mu^{s+1} g_1)= \mu^s( (s+1) \grad \mu \cdot g_1 + \mu \dive g_1)$. Hence, $ (s+1) \grad \mu \cdot g_1 + \mu \dive g_1=0$ on $T$ which implies $(\grad \mu) \cdot g_1 =0$ on $\partial T$. In other words, we have $g_1 \in   \mathring{V}_{r-1}^2(\Twf)$. We then apply Lemma \ref{lem10} to write   $\mu^{s+1} g_1= \revj{\curl}(\mu^{s+1} w_2)+ \mu^{s+1} g_2$ where  $w_2 \in \LL_r^1(\Twf)$ and  $g_2 \in   \mathring{\VV}_{r-1}^2(\Twf)$. The proof is complete if we set $w:=w_1+w_2$ and $g=g_2$. 
\end{proof}

\revjtwo{We need one final preliminary lemma. }
\begin{lemma}\label{lemmaextra}
\revjtwo{Let $s \ge 0$ and let $g \in  \mathring{\VV}_0^2(\Twf)$, then there exists $w \in [\pol_0(T)]^3$ such that
\begin{equation}
\mu^s g=\curl(\mu^{s+1} w).
\end{equation}
}
\end{lemma}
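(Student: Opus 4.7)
Since $w \in [\pol_0(T)]^3$ is constant on $T$, a direct computation gives
\[
\curl(\mu^{s+1}w) = \grad(\mu^{s+1})\times w + \mu^{s+1}\curl w = (s+1)\mu^{s}\,\grad\mu\times w.
\]
Consequently, it suffices to produce a single constant vector $w \in \mathbb{R}^3$ satisfying $g = (s+1)\,\grad\mu\times w$ on all of $T$; the case $s=0$ then propagates to every $s\ge 0$ by merely rescaling $w$.

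My next step would be to show that every $g \in \mathring{\VV}_0^2(\Twf)$ is in fact piecewise constant with respect to the coarser Alfeld split $\Ta$. Fix $K_i \in \Ta$ and let $S_j^{(i)}\subset K_i$, $j\in\{0,1,2,3\}\setminus\{i\}$, denote the three Worsey--Farin sub-tetrahedra of $K_i$ with common edge $[z_T, m_{F_i}]$. On each $S_j^{(i)}$, the restriction $g|_{S_j^{(i)}}$ is a constant vector $g^{(i,j)} \in \mathbb{R}^3$. The boundary condition $g\cdot n_{F_i}=0$ on $F_i$ forces $g^{(i,j)}\cdot n_{F_i}=0$ for each $j$, so every $g^{(i,j)}$ is tangent to $F_i$. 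The additional continuity property encoded in $\VV_0^2(\Twf)$ means that $g\times n_{F_i}$ is continuous across the edges of the Clough--Tocher refinement of $F_i$; since the tangential part of each $g^{(i,j)}$ is already the full vector $g^{(i,j)}$, these three tangential vectors must coincide. Hence $g^{(i,0)}=g^{(i,1)}=g^{(i,2)}$, and $g$ is a single constant vector $g_i\in \mathbb{R}^3$ on $K_i$ with $g_i\perp n_{F_i}$, i.e., $g_i\perp\grad\mu_i$. In particular, $g \in V_0^2(\Ta)$.

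Finally, I would construct the constant $w$. On each $K_i$ the equation $(s+1)\,\grad\mu_i\times w = g_i$ is solvable (thanks to $g_i\perp\grad\mu_i$) with general solution $w = \frac{1}{s+1}\bigl(g_i\times \grad\mu_i\bigr)/|\grad\mu_i|^2 + t_i\,\grad\mu_i$ for some $t_i\in\mathbb{R}$. To glue these local solutions into a globally constant $w$, use the normal continuity $g_i\cdot n_{ij} = g_j\cdot n_{ij}$ across each interior face $F_{ij}\in\Delta_2^I(\Ta)$ together with the fact that $\mu$ is continuous across $F_{ij}$ — which forces $\grad\mu_i - \grad\mu_j$ to be parallel to $n_{ij}$ — to verify that the particular solutions $(g_i\times\grad\mu_i)/((s+1)|\grad\mu_i|^2)$ differ between $K_i$ and $K_j$ only by an element of $\spann\{\grad\mu_i,\grad\mu_j\}$, so the $t_i$'s can be chosen consistently. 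Equivalently, the linear map $\Phi:\mathbb{R}^3\to\mathring{\VV}_0^2(\Twf)$ defined by $\Phi(w) = (s+1)\,\grad\mu\times w$ is injective (any $w$ in its kernel would be parallel to each of the four vectors $\grad\mu_i$, hence zero), and a dimension count of $\mathring{\VV}_0^2(\Twf)$ — using the dependency among the six interior-flux constraints imposed by the divergence theorem applied to piecewise constants on $\Ta$ — shows the codomain is exactly three-dimensional, so $\Phi$ is a bijection.

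The main obstacle I anticipate is the gluing / surjectivity step: while the local solvability and the tangentiality are immediate, ensuring that the local ``free parameters'' $t_i$ can be simultaneously chosen to produce a single constant $w$ requires careful use of the compatibility provided by the continuity of $\mu$ and of $g\cdot n$ across the interior faces of the Alfeld split. This mirrors, and is in fact dual to, the role played by \cite[Lemma 3.11]{FuGuzman} in the proof of Lemma \ref{lemma4}, so the same reference may be invoked directly once the reduction $g\in V_0^2(\Ta)$ has been carried out.
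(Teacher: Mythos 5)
Your proposal is correct and follows essentially the same route as the paper: the paper's two-line proof simply observes that $g\in\mathring{V}_0^2(\Ta)$ (which your face-by-face tangentiality argument on each $K_i$ justifies in detail) and then invokes the Alfeld-split result \cite[Lemma 4.3]{FuGuzman}. Note that the lemma you need from that reference is the curl-analogue (Lemma 4.3), not Lemma 3.11 (the divergence version used in Lemma \ref{lemma4}); your supplementary direct construction of $w$ is consistent with this, but its gluing/cocycle step is, as you yourself note, precisely what the cited lemma supplies.
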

\begin{proof}
\revjtwo{It is easy to see that $g \in \mathring{V}_0^2(\Ta)$. Hence by  \cite[Lemma 4.3]{FuGuzman} there exists $w \in [\pol_0(T)]^3$ such that $\curl(\mu^{s+1} w)= \mu^s g$.} 
\end{proof}

Now we can prove parts (i) and (ii) of Theorem \ref{curlthm}. 
\begin{proof}[of part (i) of Theorem \ref{curlthm}]
 \revjtwo{If $r=0$ the result follows immediately from Lemma \ref{lemmaextra} with $s=0$. Now we consider the case  $r \ge 1$.} Let $0 \leq j \leq r-1$. Assume that we have found $w_{\AL{r-j}}, \dots, w_{\AL{r}}$ with $w_{\revjtwo{\ell}} \in \LL_{\revjtwo{\ell}}^1(\Twf)$ and $g_{r-(j+1)} \in \mathring{\VV}_{r-(j+1)}^2(\Twf)$ such that
\begin{align*}
	v &= \curl (\mu w_r + \mu^2 w_{r-1} + \cdots + \mu^{j+1} w_{r-j}) + \mu^{j+1} g_{r-(j+1)}.
\end{align*}
Since $\dive(\mu^{j+1} g_{r-(j+1)})=0$ on $T$,  \revjtwo{if we assume that $0 \le j <r-1$}, we apply apply Lemma \ref{step2} to get 
\begin{align*}
	\mu^{j+1}g_{r-(j+1)} &= \curl (\mu^{j+2}w_{r-(j+1)}) + \mu^{j+2}g_{r-(j+2)},
\end{align*}
where $w_{r-(j+1)} \in \AL{\LL}_{r-(j+1)}^1(\Twf)$ and $g_{r-(j+2)} \in \mathring{\VV}_{r-(j+2)}^2(\Twf)$. It follows that
\begin{align*}
	v &= \curl (\mu w_r + \mu^2 w_{r-1} + \cdots + \mu^{j+1} w_{r-j} + \mu^{j+2}w_{r-(j+1)}) + \mu^{j+2}g_{r-(j+2)}.
\end{align*}
Continuing by induction, \revjtwo{after taking care of the base case $j=0$}, we have
\begin{align*}
	v &= \curl (\mu w_r + \mu^2 w_{r-1} + \cdots + \mu^r w_1 ) +\mu^{r} g_{0},  \qquad  \text{ with } g_0 \in \mathring{\VV}_0^2(\Twf).
\end{align*}
\revjtwo{By Lemma \ref{lemmaextra} there exists $w_0 \in [\pol_0(T)]^3$} such that $\curl(\mu^{r+1} w_0)= \mu^r g_0$. Setting $w:=\mu w_r + \mu^2 w_{r-1} + \cdots + \mu^{r+1} w_0$ completes the proof.  
\end{proof}

\begin{proof}[of part (ii) of Theorem \ref{curlthm}] 
Set $\phi = v - \RTPi_0 v$, where  $\RTPi_0 v$ is the lowest-order Raviart-Thomas projection of $v$ on $T$. Then  $\int_{F_i} \phi \cdot n_{F_i}\MJN{\,dA} = 0$ for each $F_i \in \Delta_2(T)$.
Applying Theorem \revj{\ref{2dseqs}},
there exists a $\rho_i \in \mathring{\LL}_{r+1}^1(\Fct_i)$ such that $\curl_{F_i} \rho_i = \phi \cdot n_{F_i}$ on ${F_i}$. By Lemma \ref{lemmaaux861} we can extend $\rho_i$ to a function $p_i \in \AL{\LL}_{r+1}^1(\Twf)$ with support only on $K_i$, such that $\revj{(p_i)_{F_i}} = \rho_i$ on $F_i$.  We let $p=\sum_{i=0}^3 p_i \in \LL_{r+1}^1(\Twf)$. Hence,  \revj{by \eqref{curlid}}, $\curl p \cdot n_{F_i} = \phi \cdot n_{F_i}$ on $F_i$.
Furthermore, there exists $s \in [\pol_1(T)]^3$ such that $\curl s = \RTPi_0 v$ \revj{where we used that $\dive \RTPi_0 v=0$ which follows by the commuting property of $\RTPi_0$ and the fact $\dive v=0$}. We set $\psi: = s+p  \in \LL_{r+1}^1(\Twf)$, then 
\begin{align*}
	v \cdot n_{F_i} = (\phi + \RTPi_0 v) \cdot n_{F_i} = (\curl \psi) \cdot n_{F_i} \qquad \text{ on } F_i. 
\end{align*}
Hence, we see that  $v - \curl \psi \in \mathring{V}_r^2(\Twf)$. By Lemma \ref{lem10} (\revj{with $s=0$}) we have $v - \curl \psi= \curl m+\gamma$ where  $m \in \LL_{r+1}^1(\Twf)$ and $\gamma \in \mathring{\VV}_r^2(\Twf)$. 
By part (i) of Theorem \ref{curlthm}, there exists $z \in \mathring{\LL}_{r+1}^1(\Twf)$ such that $\curl z = \gamma$. Setting  $w = \psi + m+z$  completes the proof. 
\end{proof}

\subsection{Surjectivity of the gradient operator on discrete local spaces} 
Finally, to show that the sequences in \eqref{WFbdryseqs}--\eqref{WFseqs} are exact, i.e.,
to complete the proof of Theorems \ref{bdryseqs}--\ref{seqs}, we establish the surjectivity
of the gradient operator onto spaces of curl--free functions.

\begin{theorem}\label{gradthm}
Let $r\ge 0$.  Then:
\begin{enumerate}[align=left, leftmargin=0pt, labelindent=\parindent]
\item[(i)] for any $v\in \mathring{V}_r^1(\Twf)$ (resp., $v\in V_r^1(\Twf)$) satisfying $\curl v=0$, there exists $w\in \mathring{\LL}_{r+1}^0(\Twf)$ 
(resp., $w\in \LL_{r+1}^0(\Twf))$, satisfying $\grad w = v$.
\item[(ii)] for any $v \in \mathring{\LL}_r^1(\Twf)$ (resp., $v \in \LL_r^1(\Twf)$) with $\curl v  = 0$, there exists a $w \in \mathring{S}_{r+1}^0(\Twf)$ (resp., $w \in S_{r+1}^0(\Twf)$) such that $\grad w = v$.
\item[(iii)] for any $v \in \mathring{S}_r^1(\Twf)$ (resp., $v \in S_r^1(\Twf)$) where $\curl v  = 0$, there exists a $w \in \mathring{S}_{r+1}^0(\Twf)$ (resp., $w \in S_{r+1}^0(\Twf)$) such that $\grad w = v$.
\end{enumerate}
\end{theorem}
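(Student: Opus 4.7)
The plan is to establish part (i) directly from the contractibility of $T$, and then to recover parts (ii) and (iii) as almost immediate corollaries.

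For part (i) in the non-homogeneous case, my approach is as follows. Given $v \in V_r^1(\Twf)$ with $\curl v = 0$, I would invoke the fact that $T$ is contractible (hence simply connected) to produce a global potential $w \in H^1(T)$ satisfying $\grad w = v$, unique up to an additive constant. The key observation is then piecewise polynomial regularity: on each sub-tetrahedron $K \in \Twf$ the identity $\grad (w|_K) = v|_K \in [\pol_r(K)]^3$ forces $w|_K \in \pol_{r+1}(K)$, and combined with the global $H^1(T)$ continuity of $w$ across interior faces of $\Twf$, this yields $w \in \LL_{r+1}^0(\Twf)$. For the homogeneous subcase, I would use that $v \in \mathring{V}_r^1(\Twf)$ means $v \times n = 0$ on $\partial T$, which translates, by the identity \eqref{gradid}, to the surface gradient of $w$ vanishing on each face of $\partial T$. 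Since $\partial T$ is connected, $w$ must equal some single constant $c$ on all of $\partial T$; replacing $w$ by $w - c$ then gives an element of $\mathring{\LL}_{r+1}^0(\Twf)$ whose gradient is still $v$.

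Parts (ii) and (iii) follow from the chain of inclusions $\mathring{S}_r^1(\Twf) \subset \mathring{\LL}_r^1(\Twf) \subset \mathring{V}_r^1(\Twf)$ (and likewise for the spaces without homogeneous boundary conditions), noting that $\LL_r^1(\Twf)$ is continuous and hence a subset of $V_r^1(\Twf)$. Applying part (i) already furnishes $w \in \mathring{\LL}_{r+1}^0(\Twf)$ (respectively $w \in \LL_{r+1}^0(\Twf)$) with $\grad w = v$. The added hypothesis $v \in \LL_r^1(\Twf)$ gives $\grad w = v \in \LL_r^1(\Twf)$, which by the very definition of $S_{r+1}^0(\Twf)$ places $w$ in the smoother space; for part (iii), the inclusion $S_r^1(\Twf) \subset \LL_r^1(\Twf)$ reduces the problem to part (ii).

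The gradient case is noticeably easier than the divergence and curl cases in Theorems \ref{divthm} and \ref{curlthm}, because no $\mu$-weighted iterative construction of the form $\mu w_r + \mu^2 w_{r-1} + \cdots$ is needed here; the surjectivity is a purely topological consequence of the contractibility of $T$ and the piecewise structure of $v$. The only place that requires any care is the claim that the potential $w$ truly belongs to $\LL_{r+1}^0(\Twf)$ (rather than merely $V_{r+1}^0(\Twf)$), but this is automatic from $w \in H^1(T)$ together with polynomial regularity on each sub-tetrahedron. In short, I expect no substantive obstacle beyond verifying that the inclusions among the $V$, $\LL$, and $S$ spaces respect the boundary conditions, which is immediate from the definitions.
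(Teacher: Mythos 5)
Your proposal is correct and follows essentially the same route as the paper: part (i) via the existence of an $H^1(T)$ (resp.\ $\mathring{H}^1(T)$) potential on the contractible domain $T$ together with piecewise polynomial regularity, and parts (ii)--(iii) as immediate consequences of the inclusions among the spaces and the definition of $S_{r+1}^0(\Twf)$. Your treatment of the homogeneous case (constancy of $w$ on the connected boundary $\p T$ and subtraction of that constant) simply spells out a step the paper leaves implicit.
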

%
 %
 \begin{proof}[proof of (i)]
 If $v\in \mathring{V}_r^1(\Twf)$ (resp., $v\in V_r^1(\Twf)$) is curl--free, then there exists $w\in \mathring{H}^1(T)$ \MJN{(resp., $w\in {H}^1(T)$)} such that
 $\grad w = v$.  Since $v$ is a piecewise polynomial of degree $r$ with respect to $\Twf$, it follows that $w$ is a piecewise polynomial 
 of degree $(r+1$), i.e., $w\in \mathring{\LL}_{r+1}^0(\Twf)$ 
(resp., $w\in \LL_{r+1}^0(\Twf))$.
\end{proof}
\begin{proof}[proof of (ii)]
Let $v \in \mathring{\AL{\LL}}_r^1(\Twf) \subset  \mathring{V}_r^1(\Twf)$ such that $\curl v = 0$. By part (i),  there exists $w \in \mathring{\LL}_{r+1}^0(\Twf)$ such that $\grad w = v$. However, clearly $w \in \mathring{S}_{r+1}^0(\Twf)$ \AL{since $\grad w \in \mathring{\LL}_r^1(\Twf)$}.  
\end{proof}
\begin{proof}[proof of (iii)] The proof is similar to (ii) and is omitted.\end{proof}
%
 
\section{Dimension Counts}\label{sec-dim}
Here, we give dimension counts for the spaces appearing in the local sequences \eqref{WFbdryseqs} and \eqref{WFseqs}.
As a first step, we state the dimensions of the N\'ed\'elec spaces $V_r^k(\Twf)$ and $\mathring{V}_r^k(\Twf)$,
and the Lagrange spaces $\LL_r^k(\Twf)$ and $\mathring{\LL}_r^k(\Twf)$ in Table \ref{tab:VLDim}.
These counts follow from well-known dimension formulas of these spaces and the fact that
$\Twf$ contains $9$ vertices, 1 internal vertex, $26$ edges, $8$ internal edges, $30$ faces, $18$ internal faces,
and $12$ tetrahedra.

The main step in the derivation of the dimension counts for the rest of the spaces in 
\eqref{WFbdryseqs}--\eqref{WFseqs} is to prove dimension counts for the subspaces
of the N\'ed\'elec spaces with additional smoothness on the faces of $T$, i.e., 
the dimension of $\VV_r^k(\Twf)$ and $\mathring{\VV}_r^k(\Twf)$.  The dimensions 
of the other spaces will then follow from the rank--nullity theorem.
%
%
%
%
%

\begin{table}
\caption{\label{tab:VLDim}Dimension counts of the canonical N\'ed\'elec  and Lagrange spaces with and without boundary conditions
with respect to the Worsey--Farin split.}
{\scriptsize 
\begin{tabular}{c|cccc}
& $k=0$ & $k=1$ & $k=2$ & $k=3$\\
\hline
$\dim V_r^k(\Twf)$ & $(2r+1)(r^2+r+1)$ & $2 (r + 1) (3 r^2 + 6 r + 4)$ & $3 (r + 1) (r + 2) (2 r + 3)$ & \MJN{$2 ( r+1) (r+2) (r+3)$}\\
$\dim \mathring{V}_r^k(\Twf)$ & $(2r-1)(r^2-r+1)$ & $ 2 (r + 1) (3 r^2 + 1)$ & \MJN{$3 (r+1) (r+2) (2r+1)$} & $2 r^3 + 12 r^2 + 22 r + 11$\\
$\dim \LL_r^k(\Twf)$ & $(2r+1)(r^2+r+1)$ & $3(2r+1)(r^2+r+1)$ & $3(2r+1)(r^2+r+1)$ & $(2r+1)(r^2+r+1)$\\
$\dim \mathring{\LL}_r^k(\Twf)$ & $(2r-1)(r^2-r+1)$ & $3 (2 r - 1) (r^2 - r + 1)$ & $ 3 (2 r - 1) (r^2 - r + 1)$ & $(r - 1) (2 r^2 - r + 2)$
\end{tabular}
}
\end{table}

\begin{table}\label{tab:SVDim}
\caption{Summary of dimension counts proved in Section \ref{sec-dim}. Here, the superscript $+$ indicates
the positive part of the number.}
{\scriptsize 
\begin{tabular}{c|cccc}
& $k=0$ & $k=1$ & $k=2$ & $k=3$\\
\hline
$\mathring{\VV}_r^k(\Twf)$ & --- & --- & $6r^3 + 21r^2 + 9r + 2$ & \MJN{$2r^3 +12r^2+10r+3$}\\
$S_r^k(\Twf)$ & $ 2r^3-6r^2+10r-2$ & $3r(2r^2-3r+5)$ & $6r^3+8r+2$ & $(2r+1)(r^2+r+1)$\\
$\mathring{S}_r^0(\Twf)$ & $\big(2(r-2)(r-3)(r-4)\big)^+$ & $\big(3(2r-3)(r-2)(r-3)\big)^+$ & $\big(2(r-2)(3r^2-6r+4)\big)^+$ & $(r-1)(2r^2-r+2)$
\end{tabular}
}
\end{table}

\begin{definition}\label{def:eF}
\revj{Let $T \in \mct$, then}  for each $F \in \Delta_2(T)$, let $e_F \in \Delta_{1}^I(\Fct)$ be an arbitrary, but fixed,  internal edge of $\Fct$. 
\end{definition}

{We  also define the ``jump" of a function across an edge.
\begin{definition}\label{defjump}
Consider the triangulation $\Fct$ of a face $F \in \Delta_2(T)$, and let the three triangles of $\Fct$ be labeled $Q_1, Q_2$, and $Q_3$. Let $e = \p Q_1\cap \p Q_2$ be an internal edge,  let $t$ be the unit vector tangent to $e$ pointing away from the split point $m_F$, and let $\sss = n_F\times t$ is unit vector orthogonal to both $t$ and $n_F$. 
Then, the jump of a function $p \in \pol_r(\Twf)$ \revj{across the edge $e$} is defined as
\begin{align*}
\jmp{p}_e &= (p|_{Q_1} - p|_{Q_2})\revj{\sss}.
\end{align*}
\end{definition}
}

\MJN{
\begin{remark}
In the remainder of the paper, the edge associated with vectors $t$ and $\sss$
should be inferred from their context.  For example, in the expression $\int_e v\cdot t\, ds$, 
the unit vector $t$ is understood to be the tangent vector of the edge $e$.
\end{remark}
}

\begin{lemma}\label{lemmaaux973}
Let $p \in V_{{\rm div}, r}^1(\Fct)$ and suppose that 
\begin{subequations}\label{aux973}
\begin{alignat}{2}
\int_{e_F} \jmp{p \cdot t}_{e_F} m\MJN{\,ds} &=  0 \quad &&\text{ for all }  m \in \pol_r(e_F)
\label{aux973-1} \\
\int_e  \jmp{p \cdot t}_e m\MJN{\,ds} & =0  \quad &&\text{ for all }  m \in \pol_{r-1}(e), \forall e \in \Delta_{1}^I(\Fct)\backslash \{e_F\}, \label{aux973-2}
\end{alignat}
\end{subequations}
where $t$ is the unit vector tangent to an edge $e$.  
Then, $p \in \LL_r^1(\Fct)$. 
\end{lemma}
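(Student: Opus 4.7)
The plan is to show that the three tangential jumps $J_i := \jmp{p\cdot t}_{e_i}$ of $p$ across the three internal edges $e_1, e_2, e_3$ of $\Fct$ all vanish identically. Combined with the div-conformity of $p$ (which forces the \emph{normal} jumps to vanish), this yields full continuity of $p$ across each internal edge, proving $p \in \LL_r^1(\Fct)$. Without loss of generality take $e_F = e_1$.

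\textbf{Step 1 (kill $J_1$).} Since $J_1\in \pol_r(e_1)$ is $L^2$-orthogonal to all of $\pol_r(e_1)$ by \eqref{aux973-1}, I immediately get $J_1\equiv 0$; in particular $J_1(m_F)=0$.

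\textbf{Step 2 (compatibility at the split point).} I will exploit the fact that all three internal edges meet at the single vertex $m_F$. Let $Q_1,Q_2,Q_3$ denote the triangles of $\Fct$ labeled cyclically so that $e_i\subset \p Q_i\cap\p Q_{i+1}$ (indices mod $3$), and set $p_j:=p|_{Q_j}$. Since $p\in V_{{\rm div},r}^1(\Fct)$, the normal component of $p$ across each $e_i$ is continuous, so the edge jump $(p_i-p_{i+1})|_{e_i}$ is purely tangential and equal to $\epsilon_i J_i\, t_i$ for some sign $\epsilon_i\in\{\pm 1\}$. Evaluating these three relations at the common vertex $m_F$ and summing cyclically produces the telescoping identity
\begin{equation*}
\sum_{i=1}^{3}\epsilon_i J_i(m_F)\, t_i \;=\; 0 \qquad \text{in } \mathbb{R}^2.
\end{equation*}
Since $J_1(m_F)=0$ from Step 1 and the tangent vectors $t_2, t_3$ are linearly independent (the two edges are not collinear), this forces $J_2(m_F)=J_3(m_F)=0$.

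\textbf{Step 3 (kill $J_2$ and $J_3$ using \eqref{aux973-2}).} For $i\in\{2,3\}$, parametrize $e_i$ by arclength $s$ with $s=0$ at $m_F$. Since $J_i(m_F)=0$ and $J_i\in \pol_r(e_i)$, there exists $q_i\in \pol_{r-1}(e_i)$ such that $J_i(s)=s\, q_i(s)$. Testing \eqref{aux973-2} with the admissible choice $m=q_i\in\pol_{r-1}(e_i)$ yields
\begin{equation*}
\int_{e_i} s\, q_i(s)^2 \, ds \;=\; 0.
\end{equation*}
Because $s\ge 0$ on $e_i$ and $s>0$ on the interior of $e_i$, this integrand is a nonnegative continuous function, forcing $q_i\equiv 0$ and hence $J_i\equiv 0$.

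The expected obstacle is Step 2: the hypothesis on $e_F$ alone is an $(r+1)$-dimensional constraint, while on each of the other two edges only the weaker $r$-dimensional constraint against $\pol_{r-1}$ is available; the remaining degree of freedom on each of $e_2,e_3$ (the pointwise value at $m_F$) can only be absorbed by invoking the geometric fact that all three internal edges share the vertex $m_F$, together with normal-continuity of $p$. Once this compatibility identity is in hand, Step 3 is a routine orthogonality-plus-weight argument. A quick consistency check: $\dim V_{{\rm div},r}^1(\Fct)-\dim \LL_r^1(\Fct) = 3(r+1)^2-(3r^2+3r+2) = 3r+1$, which matches exactly the total number of scalar conditions $(r+1)+r+r$ imposed in \eqref{aux973}.
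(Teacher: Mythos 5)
Your proof is correct and follows essentially the same route as the paper's: kill the jump on $e_F$ using the full moment set, use normal-continuity of $p$ together with the fact that all three internal edges meet at $m_F$ to conclude the tangential jumps on the other two edges vanish at $m_F$ (the paper phrases this as continuity of $p$ at the split point rather than as your telescoping identity, but it is the same observation), and then factor out the arclength weight and test with the quotient to kill the remaining jumps. Your Step 3 and the dimension check merely spell out details the paper leaves implicit.
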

\begin{proof}
Let $e \in \Delta_{1}^I(\Fct)$, and  \MJN{recall that $\sss$ is a unit vector} parallel to $F$ that is perpendicular to the edge $e$. Then since $p \in V_{{\rm div}, r}^1(\Fct)$, $\jmp{p \cdot \MJN{\sss}}=0$. In order to show that $p  \in \LL_r^1(\Fct)$ we need to show that  $\jmp{p \cdot t} =0$ for all internal edges  $e \in  \Delta_{1}^I(\Fct)$. By $\eqref{aux973-1}$ this is certainly true for $e=e_F$. In fact, this shows that $p$ is continuous \revj{across} $e_F$. Since \MJN{$\jmp{p \cdot s}=0$} on the two remaining edges this show that $p$ is continuous on the interior vertex $z$. In particular,  $\jmp{p \cdot t} (z)$ vanishes on the two remaining edges. Hence, using \eqref{aux973-2} shows that  $\jmp{p \cdot t} =0$.
\end{proof}

\begin{corollary}\label{cor782}
Let  $v \in \mathring{V}_r^2(\Twf)$ and suppose that for all $F \in \Delta_2(T)$, the following holds 
\begin{alignat*}{1}
\int_{e_F} \jmp{v_F \cdot t}_{e_F} m \MJN{\,ds}&=  0 \quad \text{ for }  m \in \pol_r(e_F),  \\
\int_e  \jmp{v_F \cdot t}_e m\MJN{\,ds} & =0  \quad \text{ for }  m \in \pol_{r-1}(e), \forall e \in \Delta_{1}^I(\Fct)\backslash \{e_F\}. 
\end{alignat*}
Then,  $v \in \mathring{\VV}_r^2(\Twf).$
\end{corollary}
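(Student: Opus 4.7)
My plan is to reduce the statement to Lemma \ref{lemmaaux973} applied face by face. Fix an arbitrary face $F\in \Delta_2(T)$ of the macro-element $T$, and let $v_F = n_F\times v\times n_F|_F$ denote the tangential part of $v$ on $F$. Since $v\in \mathring{V}_r^2(\Twf)$ satisfies $v\cdot n_F=0$ on $F$, to conclude that $v\times n$ is continuous on $F$ (and hence that $v\in \mathring{\VV}_r^2(\Twf)$) it suffices to show that $v_F \in \LL_r^1(\Fct)$.

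The first substantive step is to verify that $v_F \in V_{{\rm div},r}^1(\Fct)$, i.e., that $v_F\cdot \sss = v\cdot \sss$ is continuous across every internal edge $e$ of $\Fct$, where $\sss = n_F\times t$. An internal edge of $\Fct$ has the form $e=[m_F,x_i]$ for a vertex $x_i$ of $F$, and the two triangles of $\Fct$ adjacent to $e$ are faces of two tetrahedra $S_1,S_2\in \Twf$ which share the internal $2$-simplex $R = [z_T,m_F,x_i]$ of $\Twf$. Let $N$ be a unit normal to $R$. Because $t$ is tangent to $R$, we may decompose $N = \alpha n_F + \beta \sss$, and $\beta\neq 0$ since $z_T$ is interior to $T$ and hence $R$ is not a subset of the plane of $F$.

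Now $v\in V_r^2(\Twf)$ gives $\jmp{v\cdot N}_R = 0$, and in particular along $e\subset R$. The boundary condition $v\cdot n_F=0$ on $F$ yields $v|_{S_j}\cdot n_F = 0$ on $e$ for $j=1,2$, so the $n_F$--components of $v|_{S_1}$ and $v|_{S_2}$ agree along $e$. Combining with $\beta\neq 0$ gives $\jmp{v\cdot \sss}_e = 0$, establishing $v_F\in V_{{\rm div},r}^1(\Fct)$. With this in hand, the hypotheses of the corollary are precisely the hypotheses of Lemma \ref{lemmaaux973} for $p=v_F$, so the lemma yields $v_F\in \LL_r^1(\Fct)$. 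Since $F$ was arbitrary and $v\cdot n=0$ on $\p T$ by assumption, this finishes the proof.

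The main subtlety, and where I expect the bulk of the argument to live, is the geometric decomposition in the second paragraph: one must pair the $H(\dive;T)$ continuity of $v$ across the internal face $R$ with the homogeneous normal boundary condition on $F$ to extract tangential-normal continuity of $v_F$ across $e$, and this uses the nontriviality of the $\sss$-component of $N$, which is a feature specific to the Worsey--Farin geometry (the split point $z_T$ being interior to $T$).
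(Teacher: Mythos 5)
Your proposal is correct and follows essentially the same route as the paper: the paper also reduces the corollary to Lemma \ref{lemmaaux973} after establishing $v_F\in V_{{\rm div},r}^1(\Fct)$, and that membership is proved (in Lemma \ref{lemmaVr0div} of the appendix) by exactly your decomposition of the normal $N$ to the internal face $R=[z_T,m_F,x_i]$ as $\alpha n_F+\beta\sss$, combined with $H(\dive)$-continuity across $R$ and the vanishing normal trace on $F$. Your explicit remark that $\beta\neq 0$ because $z_T$ is interior to $T$ is a point the paper leaves implicit, but otherwise the two arguments coincide.
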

\begin{proof}
The proof of Lemma \ref{lem10} shows that $v_F \in V_{\dive, r}^1(\Fct)$ for all  $F \in \Delta_2(T)$.
The result now follows by applying Lemma \ref{lemmaaux973}. 
\end{proof}
We see that the number of constraints in Corollary \ref{cor782} is  $4(3r+1)$. We  use this result to determine  the dimension of the space $\mathring{\VV}_r^2(\Twf)$.

\begin{lemma}\label{dimV2o}
Let $v \in \mathring{\VV}_r^2(\Twf)$ with $r \geq 1$. Then $v$ is fully determined by the following DOFs.
\begin{subequations}\label{VV2o_dofs}
\begin{align}
&v|_f \cdot n_f (a),  && \forall a \in \Delta_0(T),  && \forall f \in \Delta_2^I(\revj{\Ta}), a \subset \overline{f}, \label{VV2o:a}\\
&\int_{e} (v|_f \cdot n_f)  \kappa \, ds,  && \forall \kappa \in \pol_{r-2}(e), && \forall e \in \Delta_1(T), \forall f \in \Delta_2^{{I}}(\Twf), e \subset \overline{f}, \label{VV2o:bm}\\
&\int_{e} (v_F \cdot t)  \kappa \, ds,  && \forall \kappa \in \pol_{r-2}(e), && \forall e \in \Delta_1(\Fct)\backslash  \Delta_1^I(\Fct) ,  \forall F \in \Delta_2(T),\label{VV2o:bmm}\\
&\int_F v_F \cdot \kappa \, \MJN{dA}, && \forall \kappa \in \mathring{\LL}_r^1 (\Fct), && \forall F \in \Delta_2(T), \label{VV2o:c}\\
&\int_T v \cdot \kappa \, dx, && \forall \kappa \in V_{r-1}^2(\Twf). &&  \label{VV2o:d}
\end{align}
\end{subequations}
Here $t$ is tangent to $e$. Furthermore, $\dim \mathring{\VV}_r^2(\Twf) = 6r^3 + 21r^2 + 9r + 2.$
\end{lemma}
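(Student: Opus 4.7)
My plan splits into a DOF count and a unisolvence argument.

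\emph{DOF count.} First, I would directly enumerate each family, using standard dimensions of the Lagrange and N\'ed\'elec spaces on $\Twf$ and on $\Fct$ (Table~\ref{tab:2DDim}): \eqref{VV2o:a} contributes $12$; \eqref{VV2o:bm} contributes $6(r-1)$; \eqref{VV2o:bmm} contributes $12(r-1)$; \eqref{VV2o:c} contributes $4(3r^2 - 3r + 2)$; and \eqref{VV2o:d} contributes $\dim V_{r-1}^2(\Twf) = 3r(r+1)(2r+1)$. The sum is $6r^3 + 21r^2 + 9r + 2$, matching the claim.

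\emph{Unisolvence, Step 1 (boundary).} Next I would assume all DOFs vanish and show $v\equiv 0$. The first step is to prove $v_F\equiv 0$ on each $F\in\Delta_2(T)$. Since $v\cdot n_F=0$ and $v\times n_F$ is continuous on $F$, we have $v_F=v|_F\in\LL_r^1(\Fct)$. At each vertex $x$ of $F$, my plan is to combine the three boundary conditions $v\cdot n_{F^{(i)}}(x)=0$ (from the three faces of $T$ meeting at $x$) with DOFs \eqref{VV2o:a} at two internal Alfeld faces at $x$; the key geometric input is that the Alfeld normals at $x$ lie in the plane orthogonal to $z_T-x$ while $n_F$ does not (since $z_T$ is in the interior of $T$), so these three normals span $\bbR^3$ and force $v_F(x)=0$. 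On each boundary edge $e\subset\partial F$, with $t$ the unit tangent and $s=n_F\times t$: the vanishing endpoint values together with the $r-1$ moments from \eqref{VV2o:bmm} force $v_F\cdot t\equiv 0$. For $v_F\cdot s$, the internal Alfeld face $f\supset e$ has normal $n_f\perp t$, hence $n_f=\alpha s+\beta n_F$ with $\alpha\ne 0$; since $v\cdot n_F=0$, we have $v|_f\cdot n_f=\alpha(v_F\cdot s)$ on $e$, so \eqref{VV2o:a} and \eqref{VV2o:bm} force $v_F\cdot s\equiv 0$ on $e$. Thus $v_F\in\mathring{\LL}_r^1(\Fct)$, and choosing $\kappa=v_F$ in \eqref{VV2o:c} gives $\int_F|v_F|^2\,dA=0$, whence $v_F\equiv 0$.

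\emph{Unisolvence, Step 2 (interior).} The second step uses $v|_{\partial T}=0$ (from Step 1) together with \eqref{VV2o:d} to conclude $v\equiv 0$. The subspace $W=\{u\in\mathring{\VV}_r^2(\Twf):u|_{\partial T}=0\}$ pairs with $V_{r-1}^2(\Twf)$ via the $L^2$ inner product, and I would argue this pairing is nondegenerate by exploiting the exact sequences of Theorems~\ref{divthm}--\ref{curlthm}: given a nonzero $u\in W$, decompose $\dive u\in\mathring{V}_{r-1}^3(\Twf)$ and use Theorem~\ref{divthm}(ii) together with Theorem~\ref{curlthm}(i) to construct a test function $\kappa\in V_{r-1}^2(\Twf)$ with $\int_T u\cdot\kappa\ne 0$. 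A dimension count via the exact sequence \eqref{WFbdryseq2} confirms $\dim W = \dim V_{r-1}^2(\Twf)$, matching the number of interior DOFs.

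\emph{Main obstacle.} The hardest step will be Step 1, where several DOFs and continuity conditions on $\partial F$ must be carefully orchestrated; the geometric nondegeneracy ($z_T$ strictly interior to $T$, ensuring both $\alpha\ne 0$ on edges and spanning of $\bbR^3$ by the three normals at each vertex) is essential. The dimension equality $\dim\mathring{\VV}_r^2(\Twf) = 6r^3+21r^2+9r+2$ then follows from the DOF count together with unisolvence (yielding $\dim\le$ count) and a matching lower bound from \eqref{WFbdryseq2}.
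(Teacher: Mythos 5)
Your overall strategy (count the DOFs, prove unisolvence by first forcing $v|_{\p T}=0$ face by face and then using the volume moments, and combine with a dimension lower bound) is the same as the paper's, and your Step 1 is a correct and usefully more detailed version of the argument the paper states only tersely: the spanning of $\bbR^3$ by $n_F$ together with the two Alfeld-face normals at a vertex, and the identity $v|_f\cdot n_f=\alpha(v_F\cdot\sss)$ on a boundary edge with $\alpha\neq 0$, are exactly the geometric facts needed to turn \eqref{VV2o:a}--\eqref{VV2o:bmm} into $v_F|_{\p F}=0$. However, there are two genuine gaps. First, your Step 2 mechanism does not work as described: Theorem \ref{divthm}(ii) and Theorem \ref{curlthm}(i) produce liftings of degree $r+1$ (or $r$), whereas the test space in \eqref{VV2o:d} is $V_{r-1}^2(\Twf)$, of degree $r-1$; you cannot manufacture the required test function from those results. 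The argument that actually closes this step is elementary: since $v=0$ on $\p T$ and $\mu$ vanishes exactly on $\p T$ and is positive inside, one writes $v=\mu w$ with $w\in V_{r-1}^2(\Twf)$ (normal continuity of $w$ follows by dividing the normal continuity of $v$ by $\mu>0$), and then $\kappa=w$ in \eqref{VV2o:d} gives $\int_T\mu|w|^2\,dx=0$, hence $w=0$. You never invoke this factorization, and without it the unisolvence proof is incomplete.

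Second, your source for the lower bound $\dim\mathring{\VV}_r^2(\Twf)\ge 6r^3+21r^2+9r+2$ is circular. You propose to read it off from the exact sequence \eqref{WFbdryseq2}, but the rank--nullity computation along that sequence requires $\dim\mathring{S}_r^0(\Twf)$, which in the paper's logical order is itself \emph{derived} from $\dim\mathring{\VV}_r^2(\Twf)$ (Theorem \ref{thm:Sodims}). The correct, non-circular source is Corollary \ref{cor782}: the space $\mathring{\VV}_r^2(\Twf)$ is cut out of $\mathring{V}_r^2(\Twf)$ by at most $4(3r+1)$ linear constraints (the edge moments of $\jmp{v_F\cdot t}$ on each face), so $\dim\mathring{\VV}_r^2(\Twf)\ge \dim\mathring{V}_r^2(\Twf)-4(3r+1)$, which equals the DOF count; unisolvence then gives the reverse inequality and hence equality. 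With these two repairs your proof would be complete.
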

\begin{proof}
From Corollary \ref{cor782} we have 
\begin{align}
	\dim \mathring{\VV}_r^2(\Twf) \geq \dim \mathring{V}_r^2(\Twf) - 4(3r+1) = 6r^3 + 21r^2 + 9r + 2. \label{dimVV2o:lb}
\end{align}
We see that the number of DOFs from \eqref{VV2o:a} are $12=4\cdot 3$.  There are $6(r-1)$ DOFs for  \eqref{VV2o:bm} and $12(r-1)$  DOFs for  \eqref{VV2o:bmm}. We have $4 (3(r-1)(r-2)+\revj{6}(r-1)+2)$ DOFs from \eqref{VV2o:c}, and finally $3r(2r+1)(r+1) $ for  \eqref{VV2o:c}. Hence, the total number of DOFs \eqref{VV2o_dofs} is
\begin{align*}
	3r(2r+1)(r+1) + 12(r-1)(r-2) + 42(r-1) + 20 = 6r^3 + 21r^2 + 9r+2.
\end{align*}
Hence, we will prove that $\dim \mathring{\VV}_r^2(\Twf)=6r^3 + 21r^2 + 9r+2$ if we show  the constraints \eqref{VV2o_dofs} determine a function $v \in \mathring{\VV}_r^2(\Twf)$. To this end, suppose that the DOFs $\eqref{VV2o_dofs}$ vanish. The DOFs \eqref{VV2o:a} shows that $v$ vanishes  $ \forall a \in \Delta_0(T)$. The DOFs \eqref{VV2o:bm} and \eqref{VV2o:bmm} show that $v$ vanishes  $ \forall e \in \Delta_1(T)$. Also, the DOFs \eqref{VV2o:c} show that $v_F$ vanishes  $ \forall F \in \Delta_2(T)$. Thus, $v =0$ on $\partial T$ and so  $v=\mu w$ where $w \in V_{r-1}^2(\Twf)$. Finally, \eqref{VV2o:d} shows that $w$ vanishes. Thus, $v\equiv 0$.
\end{proof}

In a similar but significantly easier way we can show 
\begin{align}\label{eqn:LBonVV3}
\dim \VV_{r}^3(\Twf) \geq \dim V_{r}^3(\Twf) - 4(2(r+1)+r)= 2 (r^3 + 6 r^2 + 5 r + 2).
\end{align}
\begin{lemma}\label{V3dim}
\MJN{The space $ \VV_{r}^3(\Twf)$ has dimension $2 (r^3 + 6 r^2 + 5 r + 2)$,
 and therefore $\dim \mathring{\VV}_r^3(\Twf) = 2r^3 +12r^2+10r+3$.}
\end{lemma}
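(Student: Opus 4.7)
The plan is to combine the lower bound $\dim \VV_r^3(\Twf) \ge 2(r^3+6r^2+5r+2)$ from \eqref{eqn:LBonVV3} with a matching upper bound by showing the face--continuity constraints defining $\VV_r^3(\Twf)$ inside $V_r^3(\Twf)$ are all independent. To this end I would introduce the linear map
\[
\Phi: V_r^3(\Twf) \to \bigoplus_{F \in \Delta_2(T)} V_r^2(\Fct)/\LL_r^0(\Fct),\qquad \Phi(q) := \bigl(q|_F \bmod \LL_r^0(\Fct)\bigr)_{F \in \Delta_2(T)},
\]
which is well--defined because $q|_F \in V_r^2(\Fct)$, and which by definition \eqref{eqn:3dVV3def} satisfies $\ker \Phi = \VV_r^3(\Twf)$. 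By Table \ref{tab:2DDim}, each summand in the codomain has dimension $\tfrac{3(r+1)(r+2)}{2} - \tfrac{3r^2+3r+2}{2} = 3r+2$.

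If $\Phi$ is surjective, then $\operatorname{rank}\Phi = 4(3r+2) = 12r+8$, and the rank--nullity theorem gives
\[
\dim \VV_r^3(\Twf) = \dim V_r^3(\Twf) - (12r+8) = 2(r+1)(r+2)(r+3) - 12r - 8 = 2(r^3+6r^2+5r+2).
\]
Surjectivity will follow from a simple geometric observation about $\Twf$: each sub--tetrahedron $S_j \in \Kwf_i$ has exactly one face on $\p T$, namely $[m_{F_i}, x_k, x_\ell]$, since its three other faces all contain the interior vertex $z_T$. Thus, given any prescribed data on each sub--triangle $Q \in \Fct_i$ for every $i$, I can pick $q|_{S_j} \in \pol_r(S_j)$ to be an arbitrary polynomial extension of the target on $S_j\cap \p T$ (the restriction $\pol_r(S_j) \to \pol_r(S_j\cap \p T)$ being surjective), and because the tetrahedral groups $\Kwf_0,\dots,\Kwf_3$ are pairwise disjoint, these choices can be made independently for the four faces of $T$.

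For the second assertion, $\mathring{\VV}_r^3(\Twf) = \VV_r^3(\Twf) \cap L_0^2(T)$ is cut out by the single constraint $\int_T q\,dx = 0$. Since $1 \in \VV_r^3(\Twf)$ has nonzero integral, this linear functional is nontrivial on $\VV_r^3(\Twf)$ and therefore cuts the dimension by exactly one, yielding $\dim \mathring{\VV}_r^3(\Twf) = 2r^3 + 12r^2 + 10r + 3$. I do not anticipate a substantive obstacle: the essential point is that on a Worsey--Farin split each face of $T$ is visited by a disjoint triple of sub--tetrahedra, so the face--continuity constraints decouple across the four faces and each face contributes an easy Clough--Tocher count.
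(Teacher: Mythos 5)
Your proposal is correct, and it reaches the upper bound by a genuinely different mechanism than the paper. The paper combines the constraint-counting lower bound \eqref{eqn:LBonVV3} with an upper bound obtained by exhibiting a set of $2(r^3+6r^2+5r+2)$ degrees of freedom (face moments against $\LL_r^2(\Fct)$ plus interior moments against $V_{r-1}^3(\Twf)$) and checking they determine any $q\in\VV_r^3(\Twf)$; unisolvence caps the dimension at the number of DOFs, which matches the lower bound. You instead prove directly that the face-continuity constraints are independent, by showing the trace map $V_r^3(\Twf)\to\bigoplus_{F}V_r^2(\Fct)$ is surjective — which it is, since each of the $12$ sub-tetrahedra of $\Twf$ has exactly one face on $\p T$ (its other three faces contain $z_T$), the restriction $\pol_r(S_j)\to\pol_r(S_j\cap\p T)$ is onto, and $V_r^3(\Twf)$ imposes no coupling between sub-tetrahedra. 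With $\ker\Phi=\VV_r^3(\Twf)$ and each quotient $V_r^2(\Fct)/\LL_r^0(\Fct)$ of dimension $3r+2$ (matching the $2(r+1)+r$ constraints per face counted in \eqref{eqn:LBonVV3}), rank--nullity then gives the dimension exactly, so you do not in fact need the lower bound at all — your argument upgrades \eqref{eqn:LBonVV3} to an equality in one step. What your route gives up is the explicit unisolvent DOF set \eqref{eqn:VVdofs}, which the paper's proof produces as a byproduct; what it buys is a shorter, self-contained independence argument that isolates the relevant geometric fact about Worsey--Farin splits. The final step for $\mathring{\VV}_r^3(\Twf)$ (one nontrivial functional, since constants lie in $\VV_r^3(\Twf)$ and have nonzero mean) is the same as the paper's implicit reasoning and is fine.
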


\begin{proof}
We can easily show that the following DOFs determine  $q \in  \VV_{r}^3(\Twf)$
\begin{subequations}\label{eqn:VVdofs}
\begin{align}
&\int_F q p \, dA, && \forall p \in \revj{\LL}_{r}^2(\Fct),\ \forall F \in \Delta_2(T), &\label{eqn:VV1} \\
&\int_T q p \, dx, && \forall p \in V_{r-1}^3(\Twf).  & \label{eqn:VV2}
\end{align}
\end{subequations}
 The number of DOFs are $2 (r^3 + 6 r^2 + 5 r + 2)$, which are exactly the number given by \eqref{eqn:LBonVV3}.
\end{proof}


\begin{theorem}\label{thm:Sodims}
The dimension counts in Table \ref{tab:SVDim} hold for $r\ge 1$.
\end{theorem}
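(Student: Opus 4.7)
The plan is to combine the exactness of the sequences \eqref{WFseqs} and \eqref{WFbdryseqs} (Theorems \ref{bdryseqs} and \ref{seqs}) with the rank--nullity theorem. Since the dimensions of the N\'ed\'elec and Lagrange spaces $V_r^k(\Twf)$, $\mathring V_r^k(\Twf)$, $\LL_r^k(\Twf)$, $\mathring \LL_r^k(\Twf)$ are given in Table \ref{tab:VLDim}, and the dimensions of the intermediate spaces $\mathring{\VV}_r^2(\Twf)$, $\mathring{\VV}_r^3(\Twf)$ (and hence also $\VV_r^3(\Twf)$) have already been computed in Lemmas \ref{dimV2o} and \ref{V3dim}, the only remaining unknowns are the $S_r^k(\Twf)$ and $\mathring S_r^k(\Twf)$ spaces for $0\le k\le 3$.

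I would proceed in the order $k=3,0,1,2$, which is dictated by the shape of the sequences. First, the identifications $S_r^3(\Twf)=\LL_r^3(\Twf)$ and $\mathring S_r^3(\Twf)=\mathring \LL_r^3(\Twf)$ are definitional. Next, to obtain $\dim S_r^0(\Twf)$ I would apply rank--nullity to the exact sequence \eqref{seq2}, which expresses it in terms of already-known dimensions of $\LL_{r-1}^1(\Twf)$, $V_{r-2}^2(\Twf)$ and $V_{r-3}^3(\Twf)$ (plus a $+1$ coming from the constants at the left end). Having $\dim S_r^0(\Twf)$, I would apply rank--nullity to \eqref{seq3} to obtain $\dim S_{r-1}^1(\Twf)$, and then to \eqref{seq4} to obtain $\dim S_{r-2}^2(\Twf)$. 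The same procedure carried out on the boundary-condition sequences \eqref{WFbdryseq2}, \eqref{WFbdryseq3}, \eqref{WFbdryseq4} yields $\dim \mathring S_r^0(\Twf)$, $\dim \mathring S_{r-1}^1(\Twf)$, $\dim \mathring S_{r-2}^2(\Twf)$ in succession; note that the $\mathring{\VV}$-dimensions needed on the right-hand ends of these sequences are exactly what is provided by Lemmas \ref{dimV2o} and \ref{V3dim}. Each of these steps reduces to a polynomial identity in $r$ that can be checked by direct (albeit tedious) expansion.

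The main obstacle is that Theorems \ref{bdryseqs} and \ref{seqs} require $r\ge 3$, while the claim is asserted for $r\ge 1$. For $r=1$ and $r=2$ the bookkeeping above does not apply directly, and some of the smoother spaces degenerate. I would handle these low-order cases by direct arguments: for $\mathring S_r^0(\Twf)$ with $r\in\{1,2\}$, a function $v$ in this space has $\grad v\in \mathring \LL_{r-1}^1(\Twf)$, and the trace constraints on $\p T$ together with the smoothness requirement force $v\equiv 0$, agreeing with the value $0$ obtained from the positive-part formula $\bigl(2(r-2)(r-3)(r-4)\bigr)^+$. A similar case-by-case inspection, using the inclusions $\mathring S_r^k(\Twf)\subset \mathring \LL_r^k(\Twf)$ and the definition of $S_r^k(\Twf)$ via the image of the relevant differential operator, deals with the remaining low-order entries of the table. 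Once the formulas are verified at $r=1,2$ and the exact-sequence argument handles $r\ge 3$, the proof is complete.
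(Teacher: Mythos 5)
Your proposal is correct and takes essentially the same route as the paper: the paper likewise applies the rank--nullity theorem to the exact sequences (alternating dimension sums equal to $0$ for the boundary-condition sequences \eqref{WFbdryseqs} and $1$ for \eqref{WFseqs}), and then solves for the unknown $S$-space dimensions in succession using Table \ref{tab:VLDim} and Lemmas \ref{dimV2o}--\ref{V3dim}. Your explicit treatment of the cases $r\in\{1,2\}$, where Theorems \ref{bdryseqs}--\ref{seqs} are only stated for $r\ge 3$, addresses a gap that the paper's proof passes over silently, so that discussion is a refinement rather than a deviation.
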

\begin{proof}
Using the exactness of the sequences \eqref{WFbdryseqs} and the rank--nullity theorem, we have 
\begin{align*}
	\dim \mathring{S}_r^0(\Twf) - \dim \mathring{\LL}_{r-1}^1(\Twf) + \dim \mathring{\VV}_{r-2}^2(\Twf) - \dim \mathring{V}_{r-3}^3(\Twf) &= 0, \\
	\dim \mathring{S}_r^0(\Twf) - \dim \mathring{S}_{r-1}^1(\Twf) + \dim \mathring{\LL}_{r-2}^2(\Twf) - \dim \mathring{\VV}_{r-3}^3(\Twf) &= 0, \\
	\dim \mathring{S}_r^0(\Twf) - \dim \mathring{S}_{r-1}^1(\Twf) + \dim \mathring{S}_{r-2}^2(\Twf) - \dim \mathring{\LL}_{r-3}^3(\Twf) &= 0,\\
	\dim S_r^0(\Twf) - \dim \LL_{r-1}^1(\Twf) + \dim V_{r-2}^2(\Twf) - \dim V_{r-3}^3(\Twf)  &= 1, \\
\dim S_r^0(\Twf) - \dim S_{r-1}^1(\Twf) + \dim \LL_{r-2}^2(\Twf) - \dim V_{r-3}^3(\Twf) &= 1, \\
\dim S_r^0(\Twf) - \dim S_{r-1}^1(\Twf) + \dim S_{r-2}^2(\Twf) - \dim \LL_{r-3}^3(\Twf) &= 1.
\end{align*}
This along with Table \ref{tab:VLDim} and Lemmas \ref{dimV2o}--\ref{V3dim} give the result. 
\end{proof}

%

\begin{remark}
The dimension counts show that for small $r$, some of these spaces are trivialized. In particular, $S_r^0(\Twf) = \pol_r(T)$ for $r\in \{1,2\}$, and $S_1^1(\Twf) = [\pol_1(T)]^3$.
\end{remark}

\section{Degrees of Freedom and Commuting Projections}\label{sec-DOFs}

In this section, we provide unisolvent sets of degrees of freedom (DOFs)
for all spaces appearing in the exact (local) sequences \eqref{WFseqs}.
The DOFs are constructed such that they induce
commuting projections and in addition,
lead to global finite element spaces with suitable smoothness.

\begin{definition}\label{def:edgevecs}
\revj{Let $T \in \mct$ and  $F \in \Delta_2(T)$}. \MJN{Each edge $e \in \Delta_1^I(\FCT)$ is associated with two orthonormal vectors, $(t,\sss)$ (cf.~Definition \ref{defjump}).} 
Let $\revj{\rr}$ be the unit vector orthogonal to $t$ and $\revj{\sss}$ that is tangent to the interior face $f \in \Delta_2^I(\WFT)$ that contains \revj{the} edge $e$.
\end{definition}

\subsection{SLVV degrees of freedom}\label{sec:SLVVdofs}
We first give degrees of freedom (DOFs) for the local finite element space in the sequence \eqref{seq2}
which we refer to as the `SLVV' sequence due to the given notation.  These DOFs are constructed such 
that they induce projections that commute with the appropriate differential operators.

Now, we give degrees of freedom for $S_r^0(\WFT)$ for $r \geq 3$. When $r < 3$, this space reduces to $\pol_r(T)$.
\begin{lemma}\label{S0}
A function $q \in S_r^0(\WFT)$, with $r \geq 3$, is fully determined by the following degrees of freedom.
\vspace{-2em}
\begin{subequations}\label{S0dofs}
\begin{align}
&  && \span \text{No. of DOFs}  \nonumber \\
&q(a), &&   \forall a \in \Delta_0(T), & 4,  \label{S0:a} \\
& \grad q(a), &&    \forall a \in \Delta_0(T), & 12, \label{S0:b} \\
&\int_e q \kappa \, ds, &&   \forall \kappa \in \pol_{r-4}(e), \  \forall e \in \Delta_1(T), & 6(r-3), \label{S0:c} \\
&\int_e \frac{\partial q}{\partial n_e^{\pm}} \kappa \, ds, &&    \forall \kappa \in \pol_{r-3}(e), \  \forall e \in \Delta_1(T), & 12(r-2), \label{S0:d} \\
&\int_F {\rm grad}_F\, q \cdot \kappa \, dA, &&  \forall \kappa \in {\rm grad}_F \mathring{S}_r^0(\FCT), \   \forall F \in \Delta_2(T), & 6(r-2)(r-3), \label{S0:e} \\
&\int_F (n_F \cdot \grad q) \kappa \, dA, &&  \forall \kappa \in \mathcal{R}_{r-1}^0(\FCT), \  \forall F \in \Delta_2(T), & 6(r-2)(r-3),\label{S0:f} \\
&\int_T \grad q \cdot \kappa \, dx, &&  \forall \kappa \in \grad \mathring{S}_r^0(\WFT), \span 2(r-2)(r-3)(r-4),   \label{S0:g}
\end{align}
\end{subequations}
where $\frac{\partial }{\partial n_e^{\pm}}$ represents two normal derivatives to edge $e$, so that $n_e^+, n_e^-$ and $t$ form \revj{an orthonormal basis} of $\mathbb{R}^3$.
Then the DOFs \eqref{S0dofs} define the projection $\Pi_r^0 : C^\infty(T) \rightarrow S_r^0(\WFT)$.
\end{lemma}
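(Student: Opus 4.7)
My plan splits into two parts: a dimension count and a unisolvence argument. First, I would verify that the total count of DOFs,
\[4 + 12 + 6(r-3) + 12(r-2) + 12(r-2)(r-3) + 2(r-2)(r-3)(r-4),\]
simplifies to $2r^3 - 6r^2 + 10r - 2$, which matches $\dim S_r^0(\WFT)$ from Theorem \ref{thm:Sodims}. Thus it suffices to show that if all DOFs in \eqref{S0dofs} vanish then $q\equiv 0$, which I would do by killing boundary traces from edges, to faces, to the interior.

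For any edge $e \in \Delta_1(T)$, the trace $q|_e\in \pol_r(e)$ is determined by the $r+1$ values coming from the vertex values \eqref{S0:a}, the tangential components of the vertex gradients \eqref{S0:b}, and the moments \eqref{S0:c}; hence $q|_e = 0$. Similarly, for each of the two normals $n_e^{\pm}$, the polynomial $\p q/\p n_e^{\pm}|_e \in \pol_{r-1}(e)$ is determined by its two endpoint values from \eqref{S0:b} and the $r-2$ moments in \eqref{S0:d}, so $\grad q|_e = 0$. Moving to a face $F \in \Delta_2(T)$, the restriction $q|_F$ lies in $S_r^0(\FCT)$, and the vanishing of $q$ and $\grad q$ on $\p F$ places it in $\mathring{S}_r^0(\FCT)$. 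I would then apply \eqref{S0:e} with $\kappa = \grad_F q|_F$ (admissible because $\grad_F$ is injective on $\mathring{S}_r^0(\FCT)$) to conclude $\|\grad_F q\|_{L^2(F)}^2 = 0$, so $q|_F = 0$.

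The main obstacle is controlling the normal trace $\psi := \p q/\p n_F|_F$. The key observation is that, once $q|_F = 0$, on each sub-tetrahedron $S\in \WFT$ with $F \subset \p S$ the polynomial $q|_S$ factors as $q|_S = d_F R_S$, where $d_F$ is the affine function vanishing on $F$ and $R_S \in \pol_{r-1}(S)$; consequently $\grad q|_S = -n_F R_S + d_F\grad R_S$. Continuity of $\grad q$ across an internal facet of $\WFT$ of the form $[z_T, m_F, x_j]$, which meets $F$ along an edge $e \in \Delta_1^I(\FCT)$, projected onto any $\tau$ tangent to $F$, reads $d_F\,\p_\tau (R_{S_1}-R_{S_2}) \equiv 0$ on the facet; since $d_F$ does not vanish identically there, continuity of polynomials forces $\p_\tau(R_{S_1}-R_{S_2}) \equiv 0$ on the entire facet, and in particular on $e$. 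Reinterpreted on $F$, this says $\grad_F \psi$ has no jump across $e$, so $\psi \in S_{r-1}^0(\FCT)$; combined with $\psi|_{\p F} = 0$ from the edge analysis we obtain $\psi \in \mathcal{R}_{r-1}^0(\FCT)$. Testing \eqref{S0:f} with $\kappa = \psi$ then yields $\|\psi\|_{L^2(F)}^2 = 0$.

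With $q$ and $\grad q$ both vanishing on $\p T$, we have $q \in \mathring{S}_r^0(\WFT)$, so $\grad q \in \grad\,\mathring{S}_r^0(\WFT)$ is admissible in \eqref{S0:g}; taking $\kappa = \grad q$ gives $\|\grad q\|_{L^2(T)}^2 = 0$ and hence $q \equiv 0$. Unisolvence is thereby established, and the projection $\Pi_r^0$ is defined in the standard way. The hard part, as indicated, is the normal-derivative step: the test space $\mathcal{R}_{r-1}^0(\FCT)$ is strictly smaller than $\mathring{\LL}_{r-1}^0(\FCT)$, so one cannot simply determine $\psi$ from the moment conditions \eqref{S0:f} alone; the upgrade of $\psi$ into the smoother space $S_{r-1}^0(\FCT)$ is an intrinsic consequence of the $\WFT$-wide continuity of $\grad q$ combined with the specific geometry of the Worsey--Farin internal facets $[z_T, m_F, x_j]$.
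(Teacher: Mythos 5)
Your proposal is correct and follows essentially the same path as the paper's proof: kill $q$ and $\grad q$ on edges, use \eqref{S0:e} to get $q|_F=0$, then show the normal derivative on $F$ lies in $\mathcal{R}_{r-1}^0(\FCT)$ via the factorization by the linear function vanishing on $F$, and finish with \eqref{S0:f}--\eqref{S0:g}. The only (cosmetic) difference is that the paper writes $q=\mu p$ globally and deduces $p\in S_{r-1}^0(\Kwf)$ from the continuity of $\grad q$ and the positivity of $\mu$ on each Alfeld piece, whereas you carry out the same factorization locally on the sub-tetrahedra adjacent to $F$ and check the tangential-gradient matching across the internal facets $[z_T,m_F,x_j]$ by hand.
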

\begin{proof}
The dimension of $S_r^0(\WFT)$ is $2r^3 - 6r^2 + 10r -2$, which is equal to the sum of the number of the given DOFs.

Let $q \in S_r^0(\WFT)$ such that $q$ vanishes on the DOFs \eqref{S0dofs}. On each edge $e \in \Delta_1(T)$, $q|_e = 0$ by DOFs \eqref{S0:a} - \eqref{S0:c}. Furthermore, $\grad q|_e = 0$ by DOFs \eqref{S0:b} and \eqref{S0:d}. Then $q|_F \in \mathring{S}_r^0(\FCT)$ for each $F \in \Delta_2(T)$, and \eqref{S0:e} yields ${\rm grad}_F\, q|_F = 0$. Hence $q|_F$ is constant, and since $q|_{\partial F} = 0$, it follows that $q|_F = 0$ for each $F \in \Delta_2(T)$.

Write $q = \mu p$, where $p \in \LL_{r-1}^0(\WFT)$. Since $\mu$ is a positive linear polynomial on each $K \in \revj{\TA}$, and $q|_{K} \in S_r^0(\WFK)$, it follows that $p \in S_{r-1}^0(\WFK)$, hence $p|_F \in S_{r-1}^0(\FCT)$. We have  $\grad q = \mu \grad p + p \grad \mu$, hence on $F$, $n_F \cdot \grad q|_F = p (n_F \cdot \grad \mu)|_F$. Since $\grad q|_{\partial F} = 0$, it follows that $p|_{\partial F} = 0$. Therefore $p \in \mathcal{R}_{r-1}^0(\FCT)$, so $p|_F = 0$ by \eqref{S0:f}. Now $\grad q|_{\partial T} = 0$, hence $q \in \mathring{S}_r^0(\WFT)$, and by \eqref{S0:g}, we have $\grad q = 0$. Therefore $q = 0$, which is the desired result.
\end{proof}

\begin{remark}
In two dimensions, the work of \cite{Percell1979} provided nodal degrees of freedom for the space $S_r^0(\FCT)$ with $r \geq 3$.
\end{remark}


Next, we need the following vector-calculus identity.  Its proof is found in the appendix.
\begin{lemma}\label{curlv-sgrad}
Let $e$ be an internal edge of $\FCT$, and let $t$ and \MJN{$\sss$} be unit vectors tangent and orthogonal to $e$, respectively, as in \MJN{Definition \ref{defjump}}. Let $v \in \LL_r^1(\WFT)$ for some $r \geq 0$. If $v \times n_F = 0$ on $F$, then $\jmp{\curl v \cdot t}_e = \jmp{\grad(v \cdot n_F) \cdot \MJN{\sss}}_e$.
\end{lemma}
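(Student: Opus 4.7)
The plan is to reduce the identity to a jump of a polynomial coefficient by exploiting the orthogonality structure of the frame $\{t,\sss,n_F\}$ together with the Worsey-Farin geometry. Since $F$ is planar, the vectors $n_F, t, \sss$ may be regarded as constants in a neighborhood of $e$, and a direct expansion of $\curl v$ in this basis yields
\[
\curl v \cdot t = \p_\sss(v\cdot n_F) - \p_{n_F}(v\cdot \sss),
\qquad
\grad(v\cdot n_F)\cdot \sss = \p_\sss(v\cdot n_F).
\]
Thus the desired identity reduces to verifying
\[
\jmp{\p_{n_F}(v\cdot \sss)}_e = 0,
\]
so the task becomes extracting the correct smoothness of the normal derivative of the tangential component of $v$.

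Since $v\times n_F = 0$ on $F$, the scalar $v\cdot \sss$ vanishes identically on $F$. Let $\rho_F$ be a linear polynomial with zero set exactly $F$, normalized so that $\grad\rho_F$ is a positive multiple of $n_F$. On each sub-tetrahedron $S_i\in\Twf$ having $F$ as a boundary face, $v|_{S_i}\cdot\sss$ is a polynomial vanishing on $F$, so we can factor
\[
v|_{S_i}\cdot \sss = \rho_F\, b_i
\]
for some polynomial $b_i$. This gives $\p_{n_F}(v\cdot \sss)|_{S_i}\big|_F = (\p_{n_F}\rho_F)\, b_i|_F$, and hence $\jmp{\p_{n_F}(v\cdot \sss)}_e$ is a nonzero scalar multiple of the edge jump $(b_1-b_2)|_e$ of the factor $b_i$ across $e$.

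The decisive geometric input, specific to the Worsey-Farin split, is that the two sub-tetrahedra $S_1, S_2 \in\Twf$ whose triangular traces on $F$ meet along $e$ share an interior face $f = [z_T, m_F, x_k]\in\Delta_2^I(\Twf)$, namely the triangle obtained by joining $e$ to the interior split point $z_T$. Since $v\in \LL_r^1(\Twf)$ is continuous across $f$, the scalar $v\cdot \sss = \rho_F\, b_i$ is continuous across $f$; and since $\rho_F$ does not vanish identically on $f$ (it vanishes only on $e = f\cap F$), polynomial cancellation forces $b_1 = b_2$ on $f$, and in particular $b_1|_e = b_2|_e$. This yields the required vanishing of the jump and completes the argument. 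The main subtlety is recognizing the ``hidden'' three-dimensional face $f$ through which the two-dimensional jump across $e\subset F$ propagates; once this geometric link is identified, the remainder is a short polynomial factorization.
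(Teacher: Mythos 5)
Your proof is correct. The reduction is the same as the paper's: expanding $\curl v$ in the orthonormal frame $\{t,\sss,n_F\}$ (which is right-handed since $\sss=n_F\times t$) and using that $v\times n_F=0$ on $F$ kills the tangential components there, both arguments boil the claim down to $\jmp{\p_{n_F}(v\cdot \sss)}_e=0$, and both exploit the same geometric fact that the two subtetrahedra of $\Twf$ whose traces on $F$ meet along $e$ share the interior face $f=[z_T,m_F,x_k]\supset e$. Where you genuinely differ is in how continuity across $f$ is converted into single-valuedness of the normal derivative on $e$. The paper argues differentially: writing $a_\sss:=v\cdot\sss$, the surface gradient of $a_\sss$ vanishes on $F$, so on $F$ one has $n_F\cdot\grad a_\sss=(r\cdot\grad a_\sss)/(r\cdot n_F)$, where $r$ is the unit vector tangent to $f$ and orthogonal to $t$; the tangential derivative $r\cdot\grad a_\sss$ of the continuous function $a_\sss$ is single-valued across $f$, which finishes the proof. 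You argue algebraically: factor $v\cdot\sss=\rho_F\, b_i$ on each $S_i$ and cancel the linear factor $\rho_F|_f$, which is not identically zero on $f$, in the (integral-domain) polynomial ring to get $b_1=b_2$ on $f$ and hence on $e$. The paper's mechanism applies verbatim to any continuous piecewise-$C^1$ field, with no polynomial structure needed; yours is tied to piecewise polynomials through the factorization and cancellation (a smooth version would need Hadamard's lemma). In exchange, your route avoids the vector-algebra manipulations with $r$ and makes the role of the hidden interior face $f$ --- the genuinely Worsey--Farin ingredient --- completely explicit.
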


Now we are ready to give the degrees of freedom for \MJN{$\LL_{r-1}^1(\WFT)$}.
\begin{lemma}\label{L1}
A function $v \in \LL_{r-1}^1(\WFT)$, with $r \geq 3$, is fully determined by the following degrees of freedom.
\vspace{-2em}
\begin{subequations}\label{eqn:L1dofs}
\begin{align}
&& \span \span   \text{No. of DOFs}& \nonumber \\
&v(a), &&  & 12,& \label{L1:a} \\
&\int_e v \cdot \kappa \, ds, &&   \forall \kappa \in [\pol_{r-3}(e)]^3,  \enspace  \forall e \in \Delta_1(T), &   18(r-2),& \label{L1:b} \\
&\int_e \jmp{\curl v \cdot t}_e \kappa \, ds, && \forall \kappa \in \pol_{r-3}(e), \  \forall e \in {\Delta_1^I(\FCT) \backslash\{e_F\}},  \ & \nonumber \\
& && \forall F \in \Delta_2(T),   &  {8(r-2)}, \label{L1:c} \\
&{\int_{e_F} \jmp{\curl v \cdot t}_{e_F} \kappa \, ds,} && \forall \kappa \in \pol_{r-2}(e_F) ,  \ \forall F \in \Delta_2(T),   &  {4(r-1)},&  \label{L1:d} \\
&\int_F (v \cdot n_F) \kappa \, dA, &&   \forall \kappa \in \mathcal{R}_{r-1}^0(\FCT),     \forall F \in \Delta_2(T), \span 6(r-2)(r-3),& \label{L1:e} \\
&\int_F {\rm curl}_F\, v_F \kappa \, dA, &&    \forall \kappa \in \mathring{V}_{r-2}^2(\FCT),     \forall F \in \Delta_2(T),  \span  6r^2 - 6r - 4,& \label{L1:f} \\
%
&\int_F v_F \cdot \kappa \, dA, &&   \forall \kappa \in {\rm grad}_F \mathring{S}_r^0(\Fct),     \forall F \in \Delta_2(T), \span 6(r-2)(r-3),& \label{L1:g} \\
&\int_T \curl v \cdot \kappa \, dx, &&   \forall \kappa \in \curl \mathring{\LL}_{r-1}^1(\WFT),    \span 4r^3 - 9r^2 - 7r + 21,& \label{L1:h} \\
&\int_T v \cdot \kappa \, dx, &&   \forall \kappa \in \grad \mathring{S}_{r}^0(\WFT), \span   2(r-2)(r-3)(r-4).& \label{L1:i}
\end{align}
\end{subequations}
Then the DOFs \eqref{eqn:L1dofs} define the projection $\Pi_{r-1}^1 : [C^\infty(T)]^3 \rightarrow \LL_{r-1}^1(\WFT)$.
\end{lemma}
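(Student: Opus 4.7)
The plan is to verify first, by direct arithmetic, that the sum of the DOF counts in \eqref{eqn:L1dofs} equals $\dim \LL_{r-1}^1(\WFT) = 3(2r-1)(r^2-r+1) = 6r^3-9r^2+9r-3$. Unisolvence then reduces to showing that any $v \in \LL_{r-1}^1(\WFT)$ annihilating every DOF must vanish; I would argue trace-by-trace, recovering $v|_e$ on each macro-edge, then $v_F$ and $v\cdot n_F$ on each macro-face, and finally the interior values. Each $\kappa$ proposed below is a legal test function in light of the conditions established by earlier steps.

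The nodal values \eqref{L1:a} and edge moments \eqref{L1:b} force $v|_e = 0$ on every $e \in \Delta_1(T)$ via a one-dimensional Lagrange argument. Consequently, on each $F \in \Delta_2(T)$ the tangential part $v_F$ lies in $\mathring{\LL}_{r-1}^1(\FCT)$. Exactness of \eqref{2dbdryseq2} places ${\rm curl}_F\, v_F$ in $\mathring{V}_{r-2}^2(\FCT)$, so testing \eqref{L1:f} with $\kappa = {\rm curl}_F\, v_F$ yields ${\rm curl}_F\, v_F = 0$. Exactness then produces $\phi \in \mathring{S}_r^0(\FCT)$ with $v_F = {\rm grad}_F\, \phi \in {\rm grad}_F\,\mathring{S}_r^0(\FCT)$; testing \eqref{L1:g} with $\kappa = v_F$ forces $v_F = 0$, and hence $v \times n_F = 0$ on every face.

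The delicate step is to upgrade the scalar normal trace $q := v\cdot n_F \in \mathring{\LL}_{r-1}^0(\FCT)$ to $\mathcal{R}_{r-1}^0(\FCT)$, which will let \eqref{L1:e} kill it. Because $v\times n_F = 0$ on $F$, Lemma \ref{curlv-sgrad} identifies $\jmp{\curl v \cdot t}_e$ with $\jmp{{\rm grad}_F\, q \cdot \sss}_e$ on every $e \in \Delta_1^I(\FCT)$. On $e_F$ the jump lies in $\pol_{r-2}(e_F)$, and \eqref{L1:d} provides exactly $\dim\pol_{r-2}(e_F)$ orthogonality conditions, so the jump vanishes on $e_F$ and ${\rm grad}_F\, q$ is continuous across $e_F$. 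At the split point $m_F$, the two triangles adjoining $e_F$ then share a common gradient of $q$, and $C^0$-continuity of $q$ along the two remaining internal edges (which matches the tangential derivatives there) forces the third triangle's gradient to agree at $m_F$ as well; consequently the jump on each of those two edges vanishes at $m_F$. Factoring $(s-s_{m_F})$ from such a jump and testing \eqref{L1:c} with $\kappa$ chosen as the reduced polynomial $p \in \pol_{r-3}(e)$ yields $\int_e (s-s_{m_F})\, p(s)^2\, ds = 0$ over a parameter interval anchored at $m_F$, and the constant sign of the integrand forces $p \equiv 0$. Hence $q \in S_{r-1}^0(\FCT)$; since $q|_{\partial F} = 0$, in fact $q \in \mathcal{R}_{r-1}^0(\FCT)$, and \eqref{L1:e} with $\kappa = q$ then gives $v\cdot n_F = 0$.

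At this stage $v|_{\partial T} = 0$, so $v \in \mathring{\LL}_{r-1}^1(\WFT)$. Testing \eqref{L1:h} with $\kappa = \curl v$ yields $\curl v = 0$; exactness of \eqref{WFbdryseq2} then produces $\phi \in \mathring{S}_r^0(\WFT)$ with $v = \grad \phi$, and substituting $\kappa = v$ into \eqref{L1:i} concludes $v \equiv 0$. The principal obstacle is the scalar upgrade in the previous paragraph: the DOF counts on $e_F$ and on the other two internal edges are deliberately asymmetric, and bootstrapping from full orthogonality on $e_F$ to the one-short orthogonality on the remaining edges requires exploiting the $C^0$ geometry at $m_F$, a scalar counterpart of the vector argument underlying Lemma \ref{lemmaaux973} and Corollary \ref{cor782}.
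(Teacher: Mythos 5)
Your proof is correct and follows the paper's argument essentially step for step: kill the edge traces via \eqref{L1:a}--\eqref{L1:b}, kill $v_F$ via \eqref{L1:f}--\eqref{L1:g} and the exactness of \eqref{2dbdryseq2}, use Lemma \ref{curlv-sgrad} together with the jump DOFs \eqref{L1:c}--\eqref{L1:d} to upgrade $v\cdot n_F$ to $\mathcal{R}_{r-1}^0(\FCT)$, and finish with \eqref{L1:h}--\eqref{L1:i} and the exactness of \eqref{WFbdryseq2}. The only cosmetic difference is in the normal-trace step: where the paper applies Corollary \ref{cor782} to $\curl v$ and then invokes Lemma \ref{curlv-sgrad}, you first invoke Lemma \ref{curlv-sgrad} and then re-derive the same edge-by-edge mechanism directly on $\jmp{{\rm grad}_F(v\cdot n_F)\cdot \sss}_e$, making explicit the vanishing-at-$m_F$ and sign arguments that Lemma \ref{lemmaaux973} leaves implicit.
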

\begin{proof}
The dimension of $\LL_{r-1}^1(\WFT)$ is $6r^3 - 9r^2 + 9r -3$, which is equal to the number of DOFs in \eqref{eqn:L1dofs}. Let $v \in \LL_{r-1}^1(\WFT)$ such that $v$ vanishes on the DOFs \eqref{eqn:L1dofs}. Then $v|_e = 0$ for each edge $e \in \Delta_1(T)$ by \eqref{L1:a}--\eqref{L1:b}, so $v_F \in \mathring{\LL}_{r-1}^1(\FCT)$ on each $F \in \Delta_2(T)$. From \MJN{\eqref{2dbdryseq2}}, we can see that ${\rm curl}_F\, v_F \in \mathring{V}_{r-2}^2(\FCT)$. Then \eqref{L1:f} yields ${\rm curl}_F\, v_F = 0$, and by the exactness of the sequence \MJN{\eqref{2dbdryseq2}} and \eqref{L1:g}, we have $v_F=0$. 

{Since $\curl v \cdot n_F = {\rm curl}_F\,v_F=0$ on $F$ it follows from Corollary \ref{cor782} and DOFs \eqref{L1:c}--\eqref{L1:d} that $\jmp{\curl v \cdot t}_e = 0$ for each $e \in \Delta_1^I(\FCT)$. Hence, by Lemma \ref{curlv-sgrad}, $v \cdot n_F|_F \in S_{r-1}^0(\FCT)$, and since $v \cdot n_F|_{\partial F} = 0$, we have $v \cdot n_F|_F \in \mathcal{R}_{r-1}^0(\FCT)$. Then $v \cdot n_F|_F = 0$ by \eqref{L1:e}. We therefore conclude  $v|_{\partial T} = 0$.}

Now $v \in \mathring{\LL}_{r-1}^1(\WFT)$, so $\curl v = 0$ by \eqref{L1:h}. Using the exactness of sequence \eqref{WFbdryseq2}, there exists a $p \in \mathring{S}_r^0(\WFT)$ such that $\grad p = v$. So by \eqref{L1:i}, $v = 0$, which is the desired result.
\end{proof}

Next, we can write the degrees of freedom for $V_{r-2}^2(\WFT)$.
\begin{lemma}\label{V2}
A function $w \in V_{r-2}^2(\WFT)$, with $r \geq 3$, is fully determined by the following degrees of freedom.
\begin{subequations}\label{V2dofs}
\begin{align}
& && \span \text{No. of DOFs} \nonumber \\
&\int_e \jmp{w \cdot t}_e q \, ds, &&  \forall q \in \pol_{r-3}(e), \  \forall e \in {\Delta_1^I(\FCT) \backslash\{e_F\}},  & \nonumber \\
& &&  \forall F \in \Delta_2(T),  &  {8(r-2)}, \label{V2:a} \\
&{\int_{e_F} \jmp{w \cdot t}_{e_F} q \, ds}, &&  \forall q \in \pol_{r-2}(e_F),  \ \forall F \in \Delta_2(T),  &  {4(r-1)}, \label{V2:b} \\
&\int_F w \cdot n_F q \, dA, &&  \forall q \in {V}_{r-2}^2(\FCT),  \ \forall F \in \Delta_2(T), \span  6r(r-1), \label{V2:c} \\
&\int_T (\dive w) q \, dx, &&  \forall q \in \mathring{V}_{r-3}^3(\WFT), \span  2r^3 - 6r^2 + 4r - 1, \label{V2:d} \\
&\int_T w \cdot q \, dx, &&  \forall q \in \curl \mathring{\LL}_{r-1}^1(\WFT), \span  4r^3-9r^2 - 7r+21. \label{V2:e}
\end{align}
\end{subequations}
Then the DOFs \eqref{V2dofs} define the projection $\Pi_{r-2}^2 : [C^\infty(T)]^3 \rightarrow V_{r-2}^2(\WFT)$.
\end{lemma}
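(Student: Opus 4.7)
The plan is to first verify the DOF count and then to establish unisolvence by showing that if every DOF in \eqref{V2dofs} vanishes on a function $w\in V_{r-2}^2(\WFT)$, then $w\equiv 0$. A direct tabulation, using Table \ref{tab:VLDim} for $\dim V_{r-2}^2(\WFT)$, the count $\dim V_{r-2}^2(\FCT)=\tfrac{3}{2}r(r-1)$ per face, and $\dim \mathring{V}_{r-3}^3(\WFT)=2r^3-6r^2+4r-1$, gives $6r^3-9r^2+3r$ for both the space and the listed DOFs, so unisolvence reduces to injectivity.

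For the injectivity argument I would first restrict to a single face $F\in \Delta_2(T)$ and use $w\in H(\dive;T)$ to infer that the tangential trace $w_F = n_F\times w\times n_F$ belongs to $V_{\dive,r-2}^1(\FCT)$ (the same fact already invoked in the proof of Lemma \ref{lem10} via Lemma \ref{lemmaVr0div}). With $w_F$ in a face-level $H(\dive_F)$-conforming space, the jumps $\jmp{w\cdot t}_e$ across internal edges of $\FCT$ coincide with the tangential-component jumps $\jmp{w_F\cdot t}_e$ covered by Lemma \ref{lemmaaux973}. The edge DOFs \eqref{V2:a} and \eqref{V2:b} test against $\pol_{r-3}$ on the ordinary internal edges and $\pol_{r-2}$ on the distinguished edge $e_F$, which are precisely the hypotheses of Lemma \ref{lemmaaux973} with its parameter set to $r-2$. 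Consequently, they force $w_F\in \LL_{r-2}^1(\FCT)$ on every face, i.e., $w\in \VV_{r-2}^2(\WFT)$. Meanwhile, $w\cdot n_F$ is single-valued on $F$ but only piecewise polynomial on $\FCT$, so it lies in $V_{r-2}^2(\FCT)$; the face DOFs \eqref{V2:c} then force $w\cdot n_F\equiv 0$ on each $F$, and altogether $w\in \mathring{\VV}_{r-2}^2(\WFT)$.

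Because $w\cdot n|_{\partial T}=0$, integration by parts gives $\int_T\dive w\,dx=0$, so $\dive w\in\mathring{V}_{r-3}^3(\WFT)$, and the DOFs \eqref{V2:d} force $\dive w=0$. Being a divergence-free element of $\mathring{\VV}_{r-2}^2(\WFT)$, part (i) of Theorem \ref{curlthm} produces $q\in \mathring{\LL}_{r-1}^1(\WFT)$ with $\curl q=w$, so $w\in \curl\mathring{\LL}_{r-1}^1(\WFT)$. Taking $\kappa=w$ in \eqref{V2:e} then yields $\int_T|w|^2\,dx=0$, and hence $w\equiv 0$.

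The main obstacle I anticipate is the first step: correctly identifying $w_F$ as an element of the $H(\dive_F)$-conforming space on $\FCT$ and then exploiting the asymmetric edge-moment distribution between the distinguished edge $e_F$ and the other two interior edges of $\FCT$. This asymmetry is not cosmetic—it is exactly what Lemma \ref{lemmaaux973} was tailored to handle, propagating continuity through the split point $m_F$ using one extra moment on $e_F$—and it is what makes the total DOF count match $\dim V_{r-2}^2(\WFT)$ without over- or under-constraining the jumps. Once past this step, the remainder is a straightforward cascade through the exact-sequence machinery developed in Sections \ref{sec-sur-local-div} and \ref{sec:LocalSeq}.
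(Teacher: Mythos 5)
Your proof follows essentially the same route as the paper's: match the DOF count, use \eqref{V2:c} to kill $w\cdot n_F$, use \eqref{V2:a}--\eqref{V2:b} together with Lemma \ref{lemmaaux973} (which the paper packages as Corollary \ref{cor782}) to place $w$ in $\mathring{\VV}_{r-2}^2(\WFT)$, then \eqref{V2:d} to get $\dive w=0$ and the exactness of \eqref{WFbdryseq2} to write $w$ as a curl and finish with \eqref{V2:e}. One ordering point to fix: the inference $w_F\in V^1_{\dive,r-2}(\Fct)$ does not follow from $w\in H(\dive;T)$ alone --- the proof of Lemma \ref{lemmaVr0div} needs $w\cdot n_F=0$ on $F$ to isolate $w\cdot \sss$ from the single-valued quantity $w\cdot n_f$ --- so the face DOFs \eqref{V2:c} must be invoked \emph{before} the edge-jump argument, which is exactly the order the paper uses (and is harmless here since the two sets of DOFs are independent).
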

\begin{proof}
The dimension of $V_{r-2}^2(\WFT)$ is $6r^3 - 9r^2 + 3r$, which is the number of DOFs in \eqref{V2dofs}. Let $w \in V_{r-2}^2(\WFT)$ such that $w$ vanishes on \eqref{V2dofs}. By DOF \eqref{V2:c}, we have $w \cdot n_F = 0$ on each $F \in \Delta_2(T)$. {By  DOFs \eqref{V2:a}--\eqref{V2:b}, and Corollary \ref{cor782} we have  $w \in \mathring{\mathcal{V}}_{r-2}^2(\WFT)$, so $\dive w = 0$ by \eqref{V2:d}
\MJN{and the exactness of \eqref{WFbdryseq2}}. \MJN{Using the exactness  of sequence \eqref{WFbdryseq2} again}, there exists a $v \in \mathring{\LL}_{r-1}^1(\WFT)$ such that $\curl v = w$. Therefore $w = 0$ by \eqref{V2:e}, which is the desired result.}
\end{proof}

Finally, we conclude this subsection with the DOFs of $V_{r-3}^3(\WFT)$.  The proof 
of the lemma is trivial, and so omitted.
\begin{lemma}\label{V3}
A function $p \in V_{r-3}^3(\WFT)$, with $r \geq 3$, is fully determined by the following degrees of freedom.
\vspace{-2em}
\begin{subequations}\label{V3dofs}
\begin{align}
& && \span \text{No. of DOFs} \nonumber \\
&\int_T p \, dx, &&  & 1, \label{V3:a} \\
&\int_T p q \, dx, &&  \forall q \in \mathring{V}_{r-3}^3(\WFT), &  2r(r-1)(r-2) - 1. \label{V3:b}
\end{align}
\end{subequations}
Then the DOFs \eqref{V3dofs} define the projection $\Pi_{r-3}^3 : C^\infty(T) \rightarrow V_{r-3}^3(\WFT)$.
\end{lemma}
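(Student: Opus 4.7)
The plan is to first verify that the number of DOFs in \eqref{V3dofs} equals $\dim V_{r-3}^3(\WFT)$, and then establish unisolvence by a short orthogonality argument. Since the Worsey-Farin split consists of $12$ tetrahedra and each piece carries $\binom{r}{3}$ local polynomial degrees, the dimension is $12\binom{r}{3} = 2r(r-1)(r-2)$, which matches the total $1 + (2r(r-1)(r-2)-1)$ of DOFs in \eqref{V3:a}--\eqref{V3:b}. This reduces unisolvence to showing the DOFs are injective, i.e., that any $p\in V_{r-3}^3(\WFT)$ annihilating all the DOFs vanishes.

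For the injectivity, suppose $p \in V_{r-3}^3(\WFT)$ satisfies \eqref{V3:a}--\eqref{V3:b} equal to zero. From \eqref{V3:a} we have $\int_T p\,dx = 0$, which places $p$ in the zero-mean subspace $\mathring{V}_{r-3}^3(\WFT)$. I would then test \eqref{V3:b} against the admissible choice $q := p \in \mathring{V}_{r-3}^3(\WFT)$, yielding
\begin{equation*}
0 = \int_T p\,q\,dx = \int_T p^2\,dx,
\end{equation*}
so that $p \equiv 0$ on $T$. This proves unisolvence, and the projection $\Pi_{r-3}^3 : C^\infty(T)\to V_{r-3}^3(\WFT)$ is defined by matching the functionals in \eqref{V3dofs}.

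There is essentially no obstacle to overcome here: $V_{r-3}^3(\WFT)$ has no continuity constraints across subsimplices of $\Twf$, so no interelement compatibility must be checked, and the space is already its own $L^2$-dual in the sense made precise by the decomposition $V_{r-3}^3(\WFT) = \mathbb{R}\oplus \mathring{V}_{r-3}^3(\WFT)$. The only minor care needed is to ensure the split of the DOFs into the mean part \eqref{V3:a} and the zero-mean part \eqref{V3:b} is compatible, which is immediate from the definition $\mathring{V}_{r-3}^3(\WFT) = L^2_0(T)\cap V_{r-3}^3(\WFT)$.
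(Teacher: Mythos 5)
Your proof is correct and is precisely the argument the authors had in mind (the paper explicitly omits the proof of this lemma as trivial): the DOF count matches $\dim V_{r-3}^3(\Twf)=12\binom{r}{3}=2r(r-1)(r-2)$, and unisolvence follows because \eqref{V3:a} forces $p$ into $\mathring{V}_{r-3}^3(\Twf)=L_0^2(T)\cap V_{r-3}^3(\Twf)$, after which taking $q=p$ in \eqref{V3:b} gives $\int_T p^2\,dx=0$. No gaps.
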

%

\subsection{SLVV commuting diagram}
\begin{theorem}\label{thm:slvvgen}
Let $r \geq 3$. Given the definitions of the projections $\Pi_r^0, \Pi_{r-1}^1, \Pi_{r-2}^2,$ and $\Pi_{r-3}^3$ in Lemmas \ref{S0} - \ref{V3}, the 
following commuting properties are satisfied.
 \begin{subequations}\label{slvvcommgen1}
 \begin{align}
 \grad \Pi_r^0 q &= \Pi_{r-1}^1 \grad q, \quad \forall q \in C^\infty(T), \label{gradcommgen1}\\
 \curl \Pi_{r-1}^1 v &= \Pi_{r-2}^2 \curl v, \quad \forall v \in [C^\infty(T)]^3, \label{curlcommgen1}\\
 \dive \Pi_{r-2}^2 w &= \Pi_{r-3}^3 \dive w, \quad \forall w \in [C^\infty(T)]^3. \label{divcommgen1}
 \end{align}
 \end{subequations}
\end{theorem}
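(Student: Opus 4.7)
The strategy is entirely standard: since each output space in Lemmas \ref{S0}--\ref{V3} is unisolvent against its stated degrees of freedom, I will verify each of the three identities DOF-by-DOF. That is, for \eqref{gradcommgen1} I note that both $\grad \Pi_r^0 q$ and $\Pi_{r-1}^1 \grad q$ lie in $\LL_{r-1}^1(\Twf)$ (the former because $\Pi_r^0 q\in S_r^0(\Twf)$), so by Lemma \ref{L1} it suffices to show that every DOF $\sigma$ from \eqref{eqn:L1dofs} satisfies $\sigma(\grad\Pi_r^0 q) = \sigma(\grad q)$. Since $\Pi_r^0$ preserves the DOFs \eqref{S0dofs}, each $\sigma(\grad\Pi_r^0 q)$ will be re-expressed as a linear functional of the DOFs \eqref{S0dofs} applied to $\Pi_r^0 q$, which in turn equals the same functional applied to $q$. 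The same template handles \eqref{curlcommgen1} and \eqref{divcommgen1}.

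\noindent For \eqref{gradcommgen1}, I split the DOFs of \eqref{eqn:L1dofs} into three categories. The $\curl$-based DOFs \eqref{L1:c}, \eqref{L1:d}, \eqref{L1:f}, and \eqref{L1:h} vanish identically on any gradient because $\curl\grad = 0$ (and ${\rm curl}_F\,{\rm grad}_F = 0$); hence both sides evaluate to zero. The vertex DOF \eqref{L1:a} applied to $\grad q$ is precisely the vector recorded by \eqref{S0:b}. For the edge DOFs \eqref{L1:b}, I decompose $\kappa\in [\pol_{r-3}(e)]^3$ in the local frame $\{t,n_e^+,n_e^-\}$: the tangential component $\kappa\cdot t$ is integrated by parts along $e$, reducing the pairing $\int_e \p_t q\,(\kappa\cdot t)\,ds$ to endpoint evaluations \eqref{S0:a} and edge moments of $q$ against $\pol_{r-4}(e)$ recorded by \eqref{S0:c}, while the two normal components match \eqref{S0:d} verbatim. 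The face DOFs \eqref{L1:e} and \eqref{L1:g} are exactly \eqref{S0:f} and \eqref{S0:e} (after identifying $v_F=\grad q\times n_F\times n_F = {\rm grad}_F\, q_F$ via \eqref{gradid}), and the volume DOF \eqref{L1:i} is \eqref{S0:g}.

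\noindent For \eqref{curlcommgen1}, both sides live in $V_{r-2}^2(\Twf)$, so I test against the DOFs of Lemma \ref{V2}. The volume DOF \eqref{V2:d} vanishes since $\dive\curl = 0$, and \eqref{V2:e} matches \eqref{L1:h} by definition. On each $F\in\Delta_2(T)$ the identity \eqref{curlid} gives $(\curl v)\cdot n_F = {\rm curl}_F\, v_F$, so after integration by parts on $F$ (using the exactness of \eqref{alfseq2} to generate the appropriate test functions) the face DOFs \eqref{V2:c} reduce to the face and tangential-edge DOFs of $\LL_{r-1}^1(\Twf)$ — specifically \eqref{L1:e}, \eqref{L1:f}, and boundary contributions handled by \eqref{L1:a}--\eqref{L1:b}. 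The interior-edge jump DOFs \eqref{V2:a}--\eqref{V2:b} pair directly with \eqref{L1:c}--\eqref{L1:d}. Finally, for \eqref{divcommgen1}, the normalization \eqref{V3:a} converts via the divergence theorem to a sum over faces of fluxes controlled by \eqref{V2:c}, and \eqref{V3:b} is the same functional as \eqref{V2:d} up to a sign, noting that a test function $p\in \mathring{V}_{r-3}^3(\Twf)$ may be paired with $w$ through $-\int_T w\cdot\grad p\,dx$ when $p\in H_0^1$ (and otherwise the boundary contribution again reduces to face DOFs).

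\noindent The main obstacle is the face-level bookkeeping in \eqref{curlcommgen1}: one must confirm that every face DOF of the target actually depends only on data already pinned down by DOFs \eqref{L1:a}--\eqref{L1:g} and not on interior volumetric information. This requires the two-dimensional exact sequence \eqref{alfseq2} on $\Fct$ to identify the relevant test spaces, together with Lemma \ref{curlv-sgrad} to ensure that the jump of $\curl v\cdot t$ across internal edges of $\Fct$ is indeed a meaningful (and correctly captured) quantity on $\LL_{r-1}^1(\Twf)$. A minor but separate care point is that the frame $(t,n_e^+,n_e^-)$ used in integration by parts for \eqref{L1:b} must be the same as the one implicit in \eqref{S0:d}; once the frames are aligned, every matching is formulaic.
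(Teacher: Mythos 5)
Your proposal is correct and follows essentially the same route as the paper: both form the difference $\rho$ of the two sides, note it lies in the target space, and verify DOF-by-DOF (using $\curl\grad=0$, $\dive\curl=0$, the frame decomposition along edges, the constant/mean-zero splitting of the face tests via Stokes' theorem, and the matching of each target DOF with a source DOF) that $\rho$ vanishes, whence unisolvence gives $\rho=0$. The only cosmetic differences are that the paper handles \eqref{V3:b} by direct matching with \eqref{V2:d} rather than your integration-by-parts detour, and the face DOFs \eqref{V2:c} are resolved by \eqref{L1:b} and \eqref{L1:f} alone (your mention of \eqref{L1:e} is superfluous); neither affects the validity of the argument.
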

\begin{proof}

(i) \textit{Proof of \eqref{gradcommgen1}}. Given $q \in C^\infty(T)$, let 
$\rho = \grad \Pi_r^0 q - \Pi_{r-1}^1 \grad q \in \LL_{r-1}^1(\WFT)$. Then to show \eqref{gradcommgen1} holds, 
it is sufficient to show that $\rho$ vanishes on the DOFs \eqref{eqn:L1dofs} of Lemma \ref{L1}. 

Using \eqref{L1:a} and \eqref{S0:b}, we have $\rho(a) = \grad \Pi_r^0 q(a) - \Pi_{r-1}^1 \grad q(a) = 0$ for each $a \in \Delta_0(T)$. Using \eqref{L1:b} and \eqref{S0:d}, for each $e \in \Delta_1(T)$ and for any $\kappa \in [\pol_{r-3}(e)]^3$, 
\begin{align*}
\int_e \rho \cdot \kappa \, ds &= \int_e \grad (\Pi_r^0 q - q) \cdot \kappa \, ds  \\
&= \int_e \big(\frac{\partial}{\partial n_e^+}(\Pi_r^0 q - q)n_e^+ + \frac{\partial}{\partial n_e^-}(\Pi_r^0 q - q)n_e^- + \frac{\partial}{\partial t}(\Pi_r^0 q - q) t\big) \cdot {\kappa} \, ds \\
&= \int_e \frac{\partial}{\partial t}(\Pi_r^0 q - q) t \cdot \kappa \, ds=0,
\end{align*}
where the last line follows from \eqref{S0:a} {and \eqref{S0:c}}. Using \eqref{L1:c}, for each $e \in {\Delta_1^I(\FCT) \backslash\{e_F\}}$, with $F \in \Delta_2(T)$, and for any $\kappa \in \pol_{r-3}(e)$,
\begin{align*}
\int_e \jmp{\curl \rho \cdot t}_e \kappa \, ds &= \int_e \jmp{\curl \grad (\Pi_r^0 q - q) \cdot t}_e \kappa \, ds = 0,
\end{align*}
since the curl of the gradient is zero. By the same reasoning, {the DOFs  \eqref{L1:d} of $\rho$ vanish}. 
By \eqref{L1:e} and \eqref{S0:f}, for any $\kappa \in \mathcal{R}_{r-1}^0(\FCT)$,
\begin{align*}
\int_F (\rho \cdot n_F) \kappa \, dA &= \int_F (\grad(\Pi_r^0 q - q) \cdot n_F) \kappa \, dA = 0.
\end{align*}

Similarly, using \eqref{L1:f}, \revj{\eqref{curlid}, and the fact that $\curl \grad=0$}, $\int_F {\rm curl}_F\, \rho_F \, \kappa \, dA = 0$ for every $\kappa \in \mathring{V}_{r-2}^2(\FCT)$. Next, 
for $\kappa\in {\rm grad}_F\,\mathring{S}_r^0(\Fct)$,
\begin{align*}
\int_F \rho_F \cdot \kappa \, dA &= \int_F {\rm grad}_F\,(\Pi_r^0 q - q) \cdot \kappa \, dA = 0\revj{,}
\end{align*}
using \eqref{S0:e} and \eqref{L1:g}.

On the \MJN{macro-element $T$}, we use \eqref{L1:h} so that for all $\kappa \in \curl \mathring{\LL}_{r-1}^1(\WFT)$,
\begin{align*}
\int_T \curl \rho \cdot \kappa \, dx &= \int_T \curl \grad (\Pi_r^0 q - q) \cdot \kappa \, dx = 0.
\end{align*}
Finally, we use \eqref{L1:i} to see that for all $\kappa \in \grad \mathring{S}_r^0(\WFT)$,
\begin{align*}
\int_T \rho \cdot \kappa \, dx &= \int_T \grad(\Pi_r^0 q - q) \cdot \kappa \, dx = 0,
\end{align*}
by \eqref{S0:g}. Hence by Lemma \ref{L1}, $\rho = 0$, and the identity \eqref{gradcommgen1} is proved.\medskip

(ii) \textit{Proof of \eqref{curlcommgen1}}. Given $v \in [C^\infty(T)]^3$, let $\rho = \curl \Pi_{r-1}^1 v - \Pi_{r-2}^2 \curl v \in V_{r-2}^2(\WFT)$. To prove that \eqref{curlcommgen1} holds, we will show that $\rho$ vanishes on the DOFs \eqref{V2dofs} of Lemma \ref{V2}.

On the interior edges $e \in \Delta_1^I(\FCT){ \backslash\{e_F\}}$ of each face $F \in \Delta_2(T)$, and for all $q \in \pol_{r-3}(e)$, we have
\begin{align*}
\int_e \jmp{\rho \cdot t}_e \, {q} \, ds &= \int_e \jmp{\curl(\Pi_{r-1}^1 v - v)\cdot t}_e  \, {q}\, ds = 0,
\end{align*}
using \eqref{L1:c} and \eqref{V2:a}. {Similarly,  the DOFs \eqref{V2:c} of $\rho$ vanish}. 

To show that the DOFs \eqref{V2:c} of $\rho$ vanish we consider first constant functions and then functions orthogonal to constants. To this end, we use {\eqref{V2:c},  \eqref{L1:b}}, \revj{\eqref{curlid}} and Stokes Theorem, so that
\begin{align*}
\int_F \rho \cdot n_F \, dA &= \int_F {\rm curl}_F(\Pi_{r-1}^1 v - v)_{{F}} \, dA = 0,
\end{align*}
{{where} we used that $r \ge 3$}. Moreover, for any $p \in \mathring{V}_{r-2}^2(\FCT)$, from \eqref{V2:c} and \eqref{L1:f}, we have
\begin{align*}
\int_F \rho \cdot n_F \, p \, dA &= \int_F \curl_F(\Pi_{r-1}^1 v -v)_F p \, dA = 0.
\end{align*}
 It follows from \eqref{V2:d} that for all ${p \in \mathring{V}_{r-3}^3(\Twf)}$
\begin{align*}
\int_T (\dive \rho) p \, dx &= \int_T \dive \curl (\Pi_{r-1}^1 v -v) p \, dx = 0.
\end{align*}
Finally, for all $p \in \curl \mathring{\AL{\LL}}_{r-1}^1(\WFT)$, it follows from \eqref{L1:h} and \eqref{V2:e} \MJN{that}
\begin{align*}
\int_T \rho \cdot p \, dx &= \int_T \curl(\Pi_{r-1}^1 v - v) \cdot p \, dx = 0.
\end{align*}
Hence by Lemma \ref{V2}, $\rho = 0$, and the identity \eqref{curlcommgen1} is proved.

(iii) \textit{Proof of \eqref{divcommgen1}}. Given $w \in [C^\infty(T)]^3$, let $\rho = \dive \Pi_{r-2}^2 w - \Pi_{r-3}^3 \dive w \in V_{r-3}^3(\WFT)$. We will show that $\rho$ vanishes on the DOFs \eqref{V3dofs}, so that $\rho = 0$. 

First, by \eqref{V2:c}, \eqref{V3:a}, and  Stokes Theorem, we have 
\begin{align*}
\int_T \rho \, dx = \int_T \dive(\Pi_{r-2}^2 w - w) \, dx = \int_{\partial T} (\Pi_{r-2}^2 w - w) \cdot n \, \MJN{dA} = 0.
\end{align*}
Next, using \eqref{V2:d} and \eqref{V3:b}, for any $q \in \mathring{V}_{r-3}^3(\WFT)$,
\begin{align*}
\int_T \rho q \, dx &= \int_T \dive(\Pi_{r-2}^2 w - w) q \, dx = 0,
\end{align*}
since $\mathring{V}_{r-3}^3(\WFT) = \dive \mathring{\VV}_{r-2}^2(\WFT)$ \MJN{(cf.~Theorem \ref{bdryseqs})}.
Then by Lemma \ref{V3}, $\rho = 0$, and the identity \eqref{divcommgen1} is proved.

\end{proof}

\subsection{SSLV degrees of freedom}\label{sec:SSLVdofs}
{In this section we construct degrees of freedom for spaces in the sequence that takes the Lagrange finite element in the third slot,
i.e., sequence \eqref{seq3}. The third and last space are well--suited for fluid flow problems as we discuss in the introduction}.

%
We will define degrees of freedom for each of the spaces $S_{r-1}^1(\WFT), \LL_{r-2}^2(\WFT),$ and $V_{r-3}^3(\WFT)$ that induce  projections $\pi_{r-1}^1, \pi_{r-2}^2,$ and $\pi_{r-3}^3$, respectively, such that they satisfy commuting properties.  First, we provide
a unisolvent set of DOFs for $S_{r-1}^1(\WFT)$.
%
\begin{lemma}\label{S1}
A function $v \in S_{r-1}^1(\WFT)$, with $r \geq 2$, is fully determined by the following DOFs.
\begin{subequations}\label{S1dofs}
\begin{align}
& && & \text{No. of DOFs} \nonumber \\
&v(a), &&  \forall a \in \Delta_0(T), &  12, \label{S1:a} \\
&\curl v(a), &&  \forall a \in \Delta_0(T), &  12, \label{S1:b} \\
&\int_e v \cdot q \, ds, &&  \forall q \in [\pol_{r-3}(e)]^3, \  \forall e \in \Delta_1(T), &  18(r-2), \label{S1:c} \\
&\int_e \curl v \cdot q \, ds, &&  \forall q \in [\pol_{r-4}(e)]^3, \ \forall e \in \Delta_1(T), &  18(r-3), \label{S1:d} \\
&\int_F \curl_F v_F \, q \, dA, &&  \forall q \in \LL_{r-3}^0(\FCT) \revj{\cap L_0^2(F)}, \  \forall F \in \Delta_2(\revj{T}), \,\, \span \text{\small$6 r^2 - 30 r + \revj{36}$,} \label{S1:e} \\
&\int_F (v \cdot n_F) q \, dA, &&  \text{\small$\forall q \in \mathcal{R}_{r-1}^0(\FCT)$,} \ \text{\small$\forall F \in \Delta_2(\revj{T})$,} \span \text{\small$6(r-2)(r-3)$,} \label{S1:f} \\
&\int_F v_F \cdot q \, dA, && \text{\small$\forall q \in {\rm grad}_F \mathring{S}_r^0(\FCT)$} \ \text{\small$\forall F \in \Delta_2(\revj{T})$,} \span \text{\small$6 r^2 - 30 r + \revj{36}$} \label{S1:g} \\
%
%
&\int_F (\curl v)_F \cdot q \, dA, &&  \forall q \in \mathcal{R}_{r-2}^1(\FCT), \ \forall F \in \Delta_2(\revj{T}), &  12(r-3)^2, \label{S1:h} \\
&\int_T \curl v \cdot q \, dx, &&  \forall q \in \curl \mathring{S}_{r-1}^1(\WFT), \span  \text{\small$(4r-11)(r-3)(r-4)$,}  \label{S1:i} \\
&\int_T v \cdot q \, dx, && \forall q \in \grad \mathring{S}_r^0(\WFT), \span  \text{\small$2(r-2)(r-3)(r-4)$}. \label{S1:j}
\end{align}
\end{subequations}
Then the DOFs \eqref{S1dofs} define the projection $\pi_{r-1}^1 : [C^\infty(T)]^3 \rightarrow S_{r-1}^1(\WFT)$.
\end{lemma}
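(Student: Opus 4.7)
My plan is a stage-by-stage unisolvence argument. First, I would confirm that the total number of DOFs in \eqref{S1dofs} equals $\dim S_{r-1}^1(\WFT)$, obtained from Theorem \ref{thm:Sodims} by replacing $r$ with $r-1$ in the formula for $\dim S_r^1(\WFT)$. Then, assuming $v\in S_{r-1}^1(\WFT)$ annihilates every DOF in \eqref{S1dofs}, I would show $v\equiv 0$ by successive vanishing on vertices, edges, the tangential and normal components on faces, the tangential part of $\curl v$ on faces, and finally in the interior.

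Vertex and edge vanishing is routine: \eqref{S1:a}--\eqref{S1:b} give $v(a)=0$ and $\curl v(a)=0$ at every $a\in\Delta_0(T)$, and since $v\in\LL_{r-1}^1(\WFT)$ and $\curl v\in\LL_{r-2}^2(\WFT)$ are continuous on each $e\in\Delta_1(T)$, the moments \eqref{S1:c}--\eqref{S1:d} force $v|_e=0$ and $\curl v|_e=0$. On each $F\in\Delta_2(T)$, I would next establish $v_F=0$. Because $v=0$ on $\p F$, we have $v_F\in\mathring{\LL}_{r-1}^1(\FCT)$; by \eqref{curlid} and continuity of $\curl v$, ${\rm curl}_F\, v_F=\curl v\cdot n_F$ lies in $\mathring{\LL}_{r-2}^2(\FCT)$ (mean-zero via Stokes' theorem). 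A dimension count shows $\mathring{\LL}_{r-2}^2(\FCT)$ and $\LL_{r-3}^0(\FCT)\cap L_0^2(F)$ both have dimension $\tfrac{3}{2}(r-2)(r-3)$, and the $L^2$ pairing between them is non-degenerate (verified by an explicit basis argument), so \eqref{S1:e} forces ${\rm curl}_F\, v_F=0$. Exactness of \eqref{2dbdryseq2} then places $v_F\in{\rm grad}_F\,\mathring{S}_r^0(\FCT)$, and taking $q=v_F$ in \eqref{S1:g} yields $v_F=0$. Now $v\times n_F=0$ on $F$, so Lemma \ref{curlv-sgrad} combined with continuity of $\curl v$ gives $\jmp{\grad(v\cdot n_F)\cdot\sss}_e=\jmp{\curl v\cdot t}_e=0$ for each $e\in\Delta_1^I(\FCT)$; together with continuity of $v\cdot n_F$ this shows ${\rm grad}_F(v\cdot n_F)$ is continuous across $\Delta_1^I(\FCT)$, i.e., $v\cdot n_F\in\mathcal{R}_{r-1}^0(\FCT)$. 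Then $q=v\cdot n_F$ in \eqref{S1:f} gives $v\cdot n_F=0$, and hence $v=0$ on $\p T$.

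The most delicate stage is handling $(\curl v)_F$. It is continuous on $F$ and vanishes on $\p F$, so it lies in $\mathring{\LL}_{r-2}^1(\FCT)$, but to invoke \eqref{S1:h} one must verify $(\curl v)_F\in\mathcal{R}_{r-2}^1(\FCT)$, i.e., $\divF (\curl v)_F\in\LL_{r-3}^2(\FCT)$. From $\dive\curl v=0$ one derives the surface identity $\divF (\curl v)_F=-\p_{n_F}(\curl v\cdot n_F)|_F$; because $\curl v\cdot n_F$ vanishes on $F$ and is a continuous piecewise polynomial of degree $r-2$ with respect to $\WFT$, we can factor $\curl v\cdot n_F=\lambda_F\,h$ on each sub-tetrahedron of $\WFT$ meeting $F$, where $\lambda_F$ is the barycentric coordinate of $T$ vanishing on $F$. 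Continuity of $\curl v\cdot n_F$ then forces $h$ to be continuous across the internal $\WFT$-faces meeting $F$, and since $\p_{n_F}\lambda_F$ is constant on $T$, $\divF (\curl v)_F=-(\p_{n_F}\lambda_F)\,h|_F$ lies in $\LL_{r-3}^2(\FCT)$ as required. Choosing $q=(\curl v)_F$ in \eqref{S1:h} then yields $(\curl v)_F=0$, so $\curl v=0$ on $\p T$.

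For the interior, we now have $v\in\mathring{\LL}_{r-1}^1(\WFT)$ with $\curl v\in\mathring{\LL}_{r-2}^2(\WFT)$, i.e., $v\in\mathring{S}_{r-1}^1(\WFT)$. Since $\dive\curl v=0$, exactness of \eqref{WFbdryseq3} places $\curl v\in\curl\mathring{S}_{r-1}^1(\WFT)$; setting $q=\curl v$ in \eqref{S1:i} forces $\curl v=0$. By \eqref{WFbdryseq3} again, $v\in\grad\mathring{S}_r^0(\WFT)$, and $q=v$ in \eqref{S1:j} finishes the proof. The main obstacle is the factoring argument for $\divF(\curl v)_F$, which hinges on the compatibility between continuity of $\curl v$ across internal $\WFT$-faces and the linear structure of $\lambda_F$ on all of $T$; the non-degeneracy of the $L^2$ pairing in the ${\rm curl}_F\, v_F$ step is the other nontrivial ingredient but is less structurally subtle.
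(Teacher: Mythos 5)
Your proof is correct and follows essentially the same route as the paper: vertex/edge vanishing from \eqref{S1:a}--\eqref{S1:d}, killing ${\rm curl}_F\, v_F$ via \eqref{S1:e}, then $v_F$ via exactness of \eqref{2dbdryseq2} and \eqref{S1:g}, then $v\cdot n_F$ via Lemma \ref{curlv-sgrad} and \eqref{S1:f}, then $(\curl v)_F$ via the continuity of ${\rm div}_F(\curl v)_F$ (your inline factorization $\curl v\cdot n_F=\lambda_F h$ is exactly what the paper packages as Lemma \ref{lemmaLr0div}), and finally the interior DOFs combined with the exact boundary sequences. The only soft spot is that in the ${\rm curl}_F\, v_F$ step you assert non-degeneracy of the pairing ``by an explicit basis argument'' rather than proving it; the paper's (and the natural) argument is the same factorization ${\rm curl}_F\, v_F=\lambda_F\beta$ with $\beta\in\LL_{r-3}^0(\Fct)$ followed by the choice $q=\beta$ — i.e., precisely the device you deploy later for $(\curl v)_F$ — together with the observation that \eqref{S1:e} extends to all of $\LL_{r-3}^0(\Fct)$ because $\int_F {\rm curl}_F\, v_F\,dA=0$.
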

\begin{proof}
The dimension of $S_{r-1}^1(\WFT)$ is $6r^3 - 27r^2 + 51 r - 30$, which is equal to the number of DOFs in \eqref{S1dofs}.

Let $v \in S_{r-1}^1(\WFT)$ such that $v$ vanishes on \eqref{S1dofs}. Then DOFs \eqref{S1:a} and \eqref{S1:c} yield that $v|_e = 0$ for every $e \in \Delta_1(T)$. Furthermore, it follows from DOFs \eqref{S1:b} and \eqref{S1:d} that $\curl v|_e = 0$ for each $e \in \Delta_1(T)$. 

Since ${\rm \curl}_F\, v_F \in \mathring{\LL}_{r-2}^0(\FCT)$, there exists a function $\beta \in \LL_{r-3}^0(\FCT)$ such that $\curl_F v_F = \lambda_F \beta$, where $\lambda_F$ is the continuous linear function on $F$ such that $\lambda_F(\revj{m_F}) = 1$ at the split point $m_F$ and $\lambda_F|_{\partial F} = 0$.  \revj{We also note that \eqref{S1:e} holds for all $\AL{q \in \LL_{r-3}^0(\FCT)}$ since $\int_F \curl_F v_F \, dA=0$, which follows from integration by parts}.  Thus\revj{,} we have $\curl_F v_F = 0$ \revj{by choosing $q=\beta$}. From the exactness of sequence \eqref{2dbdryseq2}, it follows that $v_F=0$ by \eqref{S1:g}.
Since $\curl v$ is continuous {and $v_F=0$, by Lemma \ref{curlv-sgrad} }we have that $\grad(v \cdot n_F)|_F$ is continuous. Therefore, $v \cdot n_F|_F \in \mathcal{R}_{r-1}^0(\FCT)$, so $v \cdot n_F|_F = 0$ by \eqref{S1:f}.

\revj{Since $\curl v  \in \LL_{r-2}^2(\Twf) \cap \mathring{V}_{r-2}^2(\Twf)$ and $\dive \curl v=0$, we can apply Lemma \ref{lemmaLr0div} }to deduce that $(\curl v)_F \in \mathcal{R}_{r-2}^1(\FCT)$, where we also used that $(\curl v)_F =0$ on $\partial F$. Hence, by \eqref{S1:h}, we have that $(\curl v)_F=0$. We already \MJN{have that $\curl v \cdot n_F |_F = 0$}, so $\curl v|_F = 0$ on each face $F \in \Delta_2(T)$.

On the macro-elements, we use \eqref{S1:i} to see that $\curl v = 0$. By the exactness of sequence \eqref{WFbdryseq2}, there exists a $p \in \mathring{S}_r^0(\WFT)$ such that $\grad p = v$. Hence by \eqref{S1:j}, $v = 0$, which is the desired result.
\end{proof}

%
\MJN{We state the DOFs of $\LL_{r-2}^2(\WFT)$
in the following lemma.}
\begin{lemma}\label{L2}
A function $w \in \LL_{r-2}^2(\WFT)$, with $r \geq 3$, is fully determined by the following DOFs.
\begin{subequations}\label{L2dofs}
\begin{align}
& &&  \span \text{No. of DOFs} \nonumber \\
&w(a), &&  \forall a \in \Delta_0(T), &  12, \label{L2:a} \\
&\int_e w \cdot q \, ds, &&  \forall q \in [\pol_{r-4}(e)]^3, \  \forall e \in \Delta_1(T), & 18(r-3), \label{L2:b} \\
&\int_F (w \cdot n_F) q \, dA,  &&  \forall q \in {\LL}_{r-3}^0(\FCT),\ \forall F\in \Delta_2(T) &   6(r-2)(r-3) + 4, \label{L2:c}\\ 
%
&\int_{e} \jmp{\dive  w}_{e} q \, ds, && \forall q \in \pol_{r-3}(e), \ e \in {\Delta_1^I(\FCT)\backslash\{e_F\}},\ \forall F\in \Delta_2(T), & 8(r-2),\label{L2:d}\\
&\int_{e_F} \jmp{\dive w}_{e_F} q \, ds, && \forall q \in \pol_{r-4}({e_F}), \ e_F \in \Delta_1^I(\FCT),\ \forall F\in \Delta_2(T), & 4(r-3),\label{L2:e}\\
%
&\int_F {w_F}  \cdot q \, dA, &&  \forall q \in \mathcal{R}_{r-2}^1(\FCT), \ \forall F \in \Delta_2(T), & 12(r-3)^2, \label{L2:f} \\
&\int_T \dive w \, q \, dx, && \forall q \in \dive \mathring{\LL}_{r-2}^2(\WFT), \span 2(r-3)(r-2)(r+2)+3, \label{L2:g} \\
&\int_T w \cdot q \, dx, &&  \forall q \in \curl \mathring{S}_{r-1}^1(\WFT), \span  (4r-11)(r-3)(r-4). \label{L2:h}
\end{align}
\end{subequations}
The the DOFs \eqref{L2dofs} define the projection $\pi_{r-2}^2 : [C^\infty(T)]^3 \rightarrow \AL{\LL}_{r-2}^2(\WFT)$.
\end{lemma}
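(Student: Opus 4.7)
The plan is to verify that the number of listed DOFs equals $\dim\LL_{r-2}^2(\WFT) = 3(2r-3)(r^2-3r+3)$ and then, following the pattern of Lemmas \ref{S0}--\ref{S1}, to show that any $w \in \LL_{r-2}^2(\WFT)$ annihilating every DOF in \eqref{L2dofs} must vanish. The dimension count is a straightforward computation using $\dim\mathcal{R}_{r-2}^1(\Fct) = 3(r-3)^2$, the identification $\dive\mathring{\LL}_{r-2}^2(\Twf) = \mathring{\VV}_{r-3}^3(\Twf)$ from exactness of \eqref{WFbdryseq3} (together with Lemma \ref{V3dim}), and $\dim\curl\mathring{S}_{r-1}^1(\Twf) = (4r-11)(r-3)(r-4)$ by rank--nullity in the same sequence and Theorem \ref{thm:Sodims}. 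The main obstacle, and the principal new ingredient relative to Lemmas \ref{S0}--\ref{V3}, is an analogue of Lemma \ref{curlv-sgrad} for the divergence, promoting jumps of $\dive w$ across interior edges of $\Fct$ into jumps of the surface divergence $\dive_F w_F$.

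Suppose $w$ annihilates every DOF. The vertex and edge moments \eqref{L2:a}--\eqref{L2:b} give $w|_e \equiv 0$ on each $e \in \Delta_1(T)$, so in particular $(w\cdot n_F)|_{\partial F} = 0$ on each face. Factoring $w\cdot n_F|_F = \lambda_F \beta$ with $\lambda_F \in \LL_1^0(\Fct)$ the Alfeld hat function at $m_F$ and $\beta \in \LL_{r-3}^0(\Fct)$, the choice $q = \beta$ in \eqref{L2:c} together with $\lambda_F > 0$ in the interior of $F$ forces $\beta = 0$, and hence $w \cdot n_F|_F = 0$.

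To show $w_F \in \mathring{\LL}_{r-2}^1(\Fct)$ vanishes, one first establishes the weak-continuity identity $\jmp{\dive w}_e = \jmp{\dive_F w_F}_e$ on every $e \in \Delta_1^I(\Fct)$. Indeed, since $w\cdot n_F|_F = 0$, on each sub-tetrahedron $K \in \Twf$ adjacent to $F$ we may factor $(w\cdot n_F)|_K = \ell_F g_K$, with $\ell_F$ a linear function vanishing on $F$ and $g_K \in \pol_{r-3}(K)$; continuity of $w$ across an interior face $f = [z_T, e] \in \Delta_2^I(\Twf)$ forces $g_{K_1}|_f = g_{K_2}|_f$, so the trace of $\partial_{n_F}(w\cdot n_F)|_K = c_F g_K$ on $e$ has no jump across $f$, and combining this with the pointwise identity $\dive w|_Q = \dive_F w_F|_Q + \partial_{n_F}(w\cdot n_F)|_{K(Q),Q}$ on each $Q \in \Fct$ yields the claim. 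With this identity in hand, full testing in \eqref{L2:d} against $\pol_{r-3}(e)$ forces $\jmp{\dive_F w_F}_e \equiv 0$ on the two non-$e_F$ interior edges; continuity of $\dive_F w_F$ at the split point $m_F$ then forces $\jmp{\dive_F w_F}_{e_F}(m_F) = 0$, so the remaining jump has the form $\ell \tilde p$ with $\ell$ linear on $e_F$ vanishing at $m_F$ and $\tilde p \in \pol_{r-4}(e_F)$; testing \eqref{L2:e} against $\tilde p$ and using $\ell \geq 0$ on $e_F$ yields $\tilde p = 0$. Hence $\dive_F w_F \in \LL_{r-3}^0(\Fct)$, so $w_F \in S_{\dive,r-2}^1(\Fct)$ with $w_F|_{\partial F} = 0$, i.e., $w_F \in \mathcal{R}_{r-2}^1(\Fct)$; choosing $q = w_F$ in \eqref{L2:f} gives $w_F = 0$, and consequently $w|_F = 0$ on every $F \in \Delta_2(T)$.

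Finally, $w \in \mathring{\LL}_{r-2}^2(\Twf)$, and the identity above (now with $\dive_F w_F \equiv 0$) shows $\dive w$ is continuous across every $F \in \Delta_2(T)$, so $\dive w \in \mathring{\VV}_{r-3}^3(\Twf) = \dive\mathring{\LL}_{r-2}^2(\Twf)$ by exactness of \eqref{WFbdryseq3}. Taking $q = \dive w$ in \eqref{L2:g} forces $\dive w = 0$; a further invocation of exactness produces $v \in \mathring{S}_{r-1}^1(\Twf)$ with $\curl v = w$, and $q = w$ in \eqref{L2:h} completes the proof.
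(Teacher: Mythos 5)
Your proof is correct and follows essentially the same route as the paper's: the only difference is that you re-derive inline the content of the paper's auxiliary Lemmas \ref{divjump} and \ref{lemmaLr0div} (the identity $\jmp{\dive w}_e = \jmp{\dive_F w_F}_e$ via the factorization $w\cdot n_F = \ell_F g_K$, and the edge-moment argument killing the jumps), whereas the paper simply cites them. All the substantive steps --- the $\lambda_F$-factorization forcing $w\cdot n_F|_F=0$, placing $w_F$ in $\mathcal{R}_{r-2}^1(\Fct)$, and the two applications of the exactness of \eqref{WFbdryseq3} --- coincide with the paper's argument.
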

\begin{proof}
The dimension of $\LL_{r-2}^2(\WFT)$ is $3(2r-3)(r^2 - 3r + 3)$, which is equal to the number of DOFs in \eqref{L2dofs}.

Let \MJN{$w \in \LL_{r-2}^2(\WFT)$} such that $w$ vanishes on the DOFs \eqref{L2dofs}. Using DOFs \eqref{L2:a} and \eqref{L2:b}, we have that $w|_e = 0$ for every $e \in \Delta_1(T)$, hence $w \cdot n_F|_F \in \mathring{\LL}_{r-2}^0(\FCT)$. Therefore $w\cdot n_F|_F=0$ by \eqref{L2:c}. Using DOFs \eqref{L2:d}--\eqref{L2:e} and Lemma \ref{divjump}, we have that $\dive w|_F \in \revj{\LL}_{r-3}^2(\FCT)$ for each $F \in \Delta_2(T)$.  \revj{Hence, using Lemma \ref{lemmaLr0div} we deduce that ${\rm div}_F\, w_F$ is continuous} which implies that $w_F \in \mathcal{R}_{r-2}^1(\FCT)$. By \eqref{L2:f}, it follows that $w_F = 0$ on $F$.

Now we have that $w \in \mathring{\LL}_{r-2}^2(\WFT)$, \revj{so by \eqref{L2:g}, $\dive w = 0$}. Using the exactness property of sequence \MJN{\eqref{WFbdryseq3}}, there exists a function $p \in \mathring{S}_{r-1}^1(\WFT)$ such that $\curl p = w$. Then by \eqref{L2:h}, $w = 0$, which is the desired result.
\end{proof}
%
\begin{lemma}\label{V3a}
A function $p \in V_{r-3}^3(\WFT)$, with $r \geq 3$, is fully determined by the following DOFs.
\begin{subequations}\label{V3adofs}
\begin{alignat}{5}
& && \span \text{No. of DOFs} \nonumber \\
&\int_{e} \jmp{p}_{e} q \, ds, && \quad \forall q \in \pol_{r-3}(e), \ e \in {\Delta_1^I(\FCT) \backslash\{e_F\}},\ \forall F\in \Delta_2(T), &\qquad 8(r-2),\label{V3a:a}\\
%
&\int_{e_F} \jmp{p}_{e_F} q \, ds, && \quad \forall q \in \pol_{r-4}({e_F}), \ {e_F} \in \Delta_1^I(\FCT), \ \forall F \in \Delta_2(T), &  \qquad 4(r-3), \label{V3a:b} \\
&\int_T p \, dx,  &&  &  1, \label{V3a:c} \\
&\int_T p q \, dx, && \forall q \in \mathring{\mathcal{V}}_{r-3}^3(\WFT),  \span  2r^3 - 6r^2 - 8r + 27. \label{V3a:d}
\end{alignat}
\end{subequations}
Then the DOFs \eqref{V3adofs} define the projection $\pi_{r-3}^3 : C^\infty(T) \rightarrow V_{r-3}^3(\WFT)$.
\end{lemma}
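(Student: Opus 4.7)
My plan is to follow the standard unisolvence template used throughout Section \ref{sec-DOFs}. First I would verify the arithmetic: by Table \ref{tab:VLDim}, $\dim V_{r-3}^3(\Twf) = 2r(r-1)(r-2) = 2r^3 - 6r^2 + 4r$, and adding the DOF counts in \eqref{V3adofs} gives $8(r-2) + 4(r-3) + 1 + (2r^3 - 6r^2 - 8r + 27) = 2r^3 - 6r^2 + 4r$, where the last summand is $\dim \mathring{\VV}_{r-3}^3(\Twf)$ from Lemma \ref{V3dim}. It then suffices to take $p \in V_{r-3}^3(\Twf)$ annihilating every DOF in \eqref{V3adofs} and show that $p \equiv 0$.

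The crux will be to argue that the jump DOFs \eqref{V3a:a}--\eqref{V3a:b} force $p|_F$ to be continuous across every face $F \in \Delta_2(T)$, i.e., $p \in \VV_{r-3}^3(\Twf)$. For each interior edge $e \in \Delta_1^I(\Fct) \setminus \{e_F\}$, the vanishing of \eqref{V3a:a} tested against all of $\pol_{r-3}(e)$ immediately gives $\jmp{p}_e \equiv 0$. On the distinguished edge $e_F$, the DOF \eqref{V3a:b} only imposes orthogonality against $\pol_{r-4}(e_F)$, which leaves $\jmp{p}_{e_F}$ as a scalar multiple of the degree-$(r-3)$ Legendre-type polynomial on $e_F$. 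The hard part, and really the only step not directly borrowed from Lemmas \ref{V3} or \ref{V2}, will be eliminating this residual scalar. I plan to do so using the compatibility at the split point $m_F$: because all three interior edges of $\Fct$ emanate from $m_F$ and because the jumps on the two non-distinguished edges vanish identically, the three values $p|_{Q_i}(m_F)$ for $i = 1, 2, 3$ must coincide, so in particular $\jmp{p}_{e_F}(m_F) = 0$. Since a nonzero polynomial in $\pol_{r-3}(e_F)$ that is $L^2$-orthogonal to $\pol_{r-4}(e_F)$ cannot vanish at an endpoint of $e_F$, this forces $\jmp{p}_{e_F} \equiv 0$ as well. The asymmetric choice of $\pol_{r-4}(e_F)$ versus $\pol_{r-3}(e)$ on the other two interior edges is precisely what compensates for the one-dimensional rank-deficiency of the three jump constraints at the common split point $m_F$.

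Once $p \in \VV_{r-3}^3(\Twf)$ is established, the DOF \eqref{V3a:c} gives $\int_T p\, dx = 0$, so $p \in \mathring{\VV}_{r-3}^3(\Twf)$. Finally I would take $q = p$ in the DOF \eqref{V3a:d} to get $\int_T p^2\, dx = 0$, whence $p \equiv 0$. The routine parts mirror the endgames in the proofs of Lemmas \ref{V3} and \ref{V2}; only the compatibility argument at $m_F$, which hinges on the Legendre polynomial being nonzero at the endpoints of $e_F$, is specific to the present setting and will require the bulk of the writeup.
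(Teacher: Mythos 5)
Your proposal is correct and follows essentially the same route as the paper: the dimension count, the mean-zero step, and the final $\int_T p^2\,dx=0$ argument are identical, and your split-point compatibility argument at $m_F$ is precisely the content of the paper's Lemma~\ref{divjump}, which the paper's proof simply invokes for the continuity step. The only cosmetic difference is that you kill the residual jump on $e_F$ via the nonvanishing of the degree-$(r-3)$ Legendre polynomial at the endpoint, whereas the paper factors out the linear function vanishing at $m_F$ and uses a weighted-$L^2$ argument; the two are equivalent.
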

\begin{proof}
The dimension of $V_{r-3}^3(\WFT)$ is $2r(r-1)(r-2)$, which is equal to the number of DOFs in \eqref{V3adofs}. 

Let $p \in V_{r-3}^3(\WFT)$ such that $p$ vanishes on the DOFs \eqref{V3adofs}. Then by \eqref{V3a:a}--\eqref{V3a:b}
and Lemma \ref{divjump}, $\jmp{p}_e = 0$ for every $e \in \Delta_1^I(\FCT)$ for each $F \in \Delta_2(T)$. Combined with \eqref{V3a:c}, it follows that $p \in \mathring{\mathcal{V}}_{r-3}^3(\WFT)$. So by \eqref{V3a:d}, $p = 0$.
\end{proof}

\subsection{SSLV commuting diagram}
\begin{theorem}\label{thm:sslv}
Let $r \geq 3$, and let $\Pi_r^0 : C^\infty(T) \rightarrow S_r^0(\WFT)$ be the projection defined in Lemma \ref{S0}, let $\pi_{r-1}^1 : [C^\infty(T)]^3 \rightarrow S_{r-1}^1(\WFT)$ be the projection defined in Lemma \ref{S1}, let $\pi_{r-2}^2 : [C^\infty(T)]^3 \rightarrow \LL_{r-2}^2(\WFT)$ be the projection defined in Lemma \ref{L2}, and let $\pi_{r-3}^3 : C^\infty(T) \rightarrow V_{r-3}^3(\WFT)$ be the projection defined in Lemma \ref{V3a}. 
Then the following commuting properties are satisfied.
 \begin{subequations}
 \begin{align}
 \grad \Pi_r^0 q &= \pi_{r-1}^1 \grad q, \quad \forall q \in C^\infty(T), \label{gradcomm2:a}\\
 \curl \pi_{r-1}^1 v &= \pi_{r-2}^2 \curl v, \quad \forall v \in [C^\infty(T)]^3, \label{curlcomm2:b}\\
 \dive \pi_{r-2}^2w &= \pi_{r-3}^3 \dive w, \quad \forall w \in [C^\infty(T)]^3. \label{divcomm2:c}
 \end{align}
 \end{subequations}
\end{theorem}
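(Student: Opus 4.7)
The plan is to follow the template established in the proof of Theorem \ref{thm:slvvgen}: for each of the three identities, introduce the residual $\rho$ as the difference of the two sides, observe that $\rho$ lies in the image finite element space of the right--hand projection, and verify that $\rho$ annihilates every degree of freedom in the relevant unisolvence lemma (Lemma \ref{S1}, Lemma \ref{L2}, or Lemma \ref{V3a}). Unisolvence then forces $\rho = 0$.

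For \eqref{gradcomm2:a}, set $\rho := \grad \Pi_r^0 q - \pi_{r-1}^1 \grad q \in S_{r-1}^1(\WFT)$. Every $\curl$--type DOF in Lemma \ref{S1}---namely \eqref{S1:b}, \eqref{S1:d}, \eqref{S1:e}, \eqref{S1:h}, \eqref{S1:i}---vanishes on $\rho$ because $\curl\grad \equiv 0$ (this is where the extra smoothness of the target space is consumed essentially for free). The remaining DOFs \eqref{S1:a}, \eqref{S1:c}, \eqref{S1:f}, \eqref{S1:g}, \eqref{S1:j} test tangential, normal, and surface--gradient traces of $\Pi_r^0 q - q$, and these match, in order, DOFs \eqref{S0:a}--\eqref{S0:d}, \eqref{S0:f}, \eqref{S0:e}, and \eqref{S0:g} of Lemma \ref{S0}, exactly as in part (i) of Theorem \ref{thm:slvvgen}.

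For \eqref{curlcomm2:b}, set $\rho := \curl \pi_{r-1}^1 v - \pi_{r-2}^2 \curl v \in \LL_{r-2}^2(\WFT)$. Vertex and edge DOFs \eqref{L2:a}--\eqref{L2:b} follow from \eqref{S1:b} and \eqref{S1:d}. For the normal--trace DOF \eqref{L2:c}, use $\curl \pi_{r-1}^1 v \cdot n_F = {\rm curl}_F(\pi_{r-1}^1 v)_F$ to reduce (via \eqref{L2:c} applied to $\curl v$) to ${\rm curl}_F(\pi_{r-1}^1 v - v)_F$; the mean--zero complement of $\LL_{r-3}^0(\Fct)$ is handled by \eqref{S1:e}, while the one--dimensional constant complement on each face is handled via Stokes' theorem and the tangential edge DOFs \eqref{S1:c}. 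The jump DOFs \eqref{L2:d}--\eqref{L2:e} vanish because $\dive\curl \equiv 0$ forces both $\jmp{\dive\curl \pi_{r-1}^1 v}_e$ and (via \eqref{L2:d}--\eqref{L2:e} applied to $\curl v$) $\jmp{\dive \pi_{r-2}^2 \curl v}_e$ to test to zero. The tangential face DOF \eqref{L2:f} matches \eqref{S1:h}; DOF \eqref{L2:g} follows from combining the exactness of \eqref{WFbdryseq3} with $\dive\curl \equiv 0$; and \eqref{L2:h} matches \eqref{S1:i}.

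For \eqref{divcomm2:c}, set $\rho := \dive \pi_{r-2}^2 w - \pi_{r-3}^3 \dive w \in V_{r-3}^3(\WFT)$. The jump DOFs \eqref{V3a:a}--\eqref{V3a:b} on $\rho$ split into \eqref{L2:d}--\eqref{L2:e} applied to $\pi_{r-2}^2 w$ minus \eqref{V3a:a}--\eqref{V3a:b} applied to $\pi_{r-3}^3 \dive w$; both reduce to $\int_e \jmp{\dive w}_e \kappa\,ds$, which is zero by smoothness of $w$. The mean--value DOF \eqref{V3a:c} reduces via Stokes' theorem to $\int_{\p T}(\pi_{r-2}^2 w - w)\cdot n\,dA$, which vanishes since constants belong to the test space $\LL_{r-3}^0(\Fct)$ of \eqref{L2:c}. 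For the bulk DOF \eqref{V3a:d}, the exactness of \eqref{WFbdryseq3} gives $\mathring{\VV}_{r-3}^3(\WFT) = \dive \mathring{\LL}_{r-2}^2(\WFT)$, so the test space of \eqref{V3a:d} coincides with that of \eqref{L2:g}; DOF \eqref{L2:g} then yields $\int_T \dive(\pi_{r-2}^2 w)\,q\,dx = \int_T \dive w\, q\,dx$ while \eqref{V3a:d} yields the analogous identity for $\pi_{r-3}^3 \dive w$, and the two cancel.

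The main obstacle is in \eqref{curlcomm2:b}: DOF \eqref{S1:e} only tests against the mean--zero subspace of $\LL_{r-3}^0(\Fct)$, so the constant complement of \eqref{L2:c} forces a separate Stokes' theorem argument on each face with careful tracking of tangential traces on the edges of $\p F$. Consistent bookkeeping of the distinguished edges $e_F$ between Lemmas \ref{S1}, \ref{L2}, and \ref{V3a} is also essential throughout.
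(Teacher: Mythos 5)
Your proposal is correct and follows essentially the same route as the paper's proof: form the residual $\rho$ for each identity, kill the curl-type (resp.\ divergence-type) DOFs via $\curl\grad\equiv 0$ (resp.\ $\dive\curl\equiv 0$), match the remaining trace DOFs of Lemmas \ref{S1}, \ref{L2}, \ref{V3a} against those of the preceding projection, handle the constant/mean-value complements by Stokes' theorem using $r\ge 3$, and invoke the exactness of \eqref{WFbdryseq3} to identify $\mathring{\VV}_{r-3}^3(\WFT)=\dive\mathring{\LL}_{r-2}^2(\WFT)$ for the final bulk DOF. The DOF correspondences you list agree with those used in the paper, so no further changes are needed.
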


\begin{proof}
(i) \textit{Proof of \eqref{gradcomm2:a}.} Let $q \in C^\infty(T)$, and set $\rho = \grad \Pi_r^0 q - \MJN{\pi_{r-1}^1} \grad q\in S_{r-1}^1(\WFT)$. 
We show that $\rho$ vanishes on the DOFs \eqref{S1dofs}.

For each $a \in \Delta_0(T)$, $\rho(a) = \grad \Pi_r^0q(a) - \MJN{\pi_{r-1}^1} \grad q(a) = 0$ by {\eqref{S0:b}} and \eqref{S1:a}. Then, using \eqref{S1:b}, $\curl \rho(a) = \curl(\grad (\Pi_r^0 q - q)) = 0$. By \eqref{S1:c}, we have, for all $p \in [\pol_{r-3}(e)]^3$ on each $e \in \Delta_1(T)$, 
\begin{align*}
\int_e \rho \cdot p \, ds &= \int_e \grad(\Pi_r^0 q - q) \cdot p \, ds \\
&= \int_e \left(\frac{\partial}{\partial n_e^+}(\Pi_r^0 q - q) n_e^+ + \frac{\partial}{\partial n_e^-}(\Pi_r^0 q - q) n_e^- + \frac{\partial}{\partial t}(\Pi_r^0 q - q)t\right) \cdot p \, ds  \\
&= \int_e \frac{\partial}{\partial t}(\Pi_r^0 q - q) (p \cdot t)\, ds &\hfill \text{by \eqref{S0:d}}\\
&= -\int_e(\Pi_r^0 q - q)\frac{\partial}{\partial t}(p \cdot t) \, ds =0&\hfill \text{by \eqref{S0:a} and \eqref{S0:c}.}
\end{align*}
Next, using \eqref{S1:d}, for all $p \in [\pol_{r-4}(e)]^3$,
\begin{align*}
\int_e \curl \rho \cdot p \, ds &= \int_e \curl \grad (\Pi_r^0 q - q) \cdot p \, ds =0.
\end{align*}

On the faces, from \eqref{S1:e}, we have for all $p \in {\LL}_{r-3}^0(\FCT) \revj{\cap L_0^2(F)}$,
\begin{align*}
\int_F \curl_F \rho_F \, p \, dA &= \int_F {\rm curl}_F\, {\rm grad}_F(\Pi_r^0 q - q) p \, dA = 0.
\end{align*}
Using \eqref{S0:f} and \eqref{S1:f}, for all $p \in \mathcal{R}_{r-1}^0(\FCT)$,
\begin{align*}
\int_F (\rho \cdot n_F) p \, dA &= \int_F (n_F \cdot \grad(\Pi_r^0 q - q))\,  p \, dA = 0.
\end{align*}
Next, using \eqref{S1:g} and {\eqref{S0:e}}, we have for all $p \in {\rm grad}_F\,\mathring{S}_{r}^0(\FCT)$,
\begin{align*}
\int_F \rho_F \cdot p \, dA &= \int_F ({\rm grad}_F(\Pi_r^0 q - q)_F) \cdot p \, dA = 0.
\end{align*}
Then we use {\eqref{S1:h}}, so that for all $p \in \mathcal{R}_{r-2}^1(\FCT)$,
\begin{align*}
\int_F (\curl \rho)_F \cdot p \, dA &= \int_F (\curl(\grad(\Pi_r^0 q -q)))_{F} \cdot p \, dA = 0.
\end{align*}

On the macro-elements, we use \eqref{S1:i} so that, for all $p \in \curl \mathring{S}_{r-1}^1(\WFT)$, 
\begin{align*}
\int_T \curl \rho \cdot p \, dx &= \int_T \curl \grad(\Pi_r^0 q - q) \cdot p \, dx = 0.
\end{align*}
Lastly, we use \eqref{S0:g} and \eqref{S1:j} to see that, for every $p \in \grad \mathring{S}_r^0(\WFT)$,
\begin{align*}
\int_T \rho \cdot p \, dx &= \int_T \grad(\Pi_r^0 q -q) \cdot p \, dx = 0.
\end{align*}
Therefore, by Lemma \ref{S1}, $\rho = 0$, and the identity \eqref{gradcomm2:a} is proved.

(ii) \textit{Proof of \eqref{curlcomm2:b}}. Let $v \in [C^\infty(T)]^3$, and set $\rho = \curl \pi_{r-1}^1 v - \pi_{r-2}^2 \curl v\in \LL_{r-2}^2(\WFT)$. We  show that $\rho$ vanishes on the DOFs \eqref{L2dofs}. 

By \eqref{S1:b} and \eqref{L2:a}, $\rho(a) = \curl \pi_{r-1}^1 v(a) - \pi_{r-2}^2\curl v(a) = 0$. By \eqref{S1:d} and \eqref{L2:b}, for all $p \in [\pol_{r-4}(e)]^3$ where $e \in \Delta_1(T)$,
\begin{align*}
\int_e \rho \cdot p \, ds &= \int_e \curl(\pi_{r-1}^1 v - v) \cdot p \, ds = 0.
\end{align*}
By \eqref{L2:c}, \revj{\eqref{S1:c}} and \eqref{S1:e}, for every $p \in {\LL}_{r-3}^0(\FCT)$,
\begin{align*}
\int_F (\rho \cdot n_F) p \, dA &= \int_F \curl_F((\pi_{r-1}^1 v)_F-v_F) p \, dA = 0,
\end{align*}
\revj{where used that $r \ge 3$.}

Using \eqref{L2:d}, for all $p \in \pol_{r-3}(e)$, $e \in  {\Delta_1^I(\FCT)\backslash\{e_F\}}$ and $F \in \Delta_2(T)$, we have
\begin{align*}
\int_{e} \jmp{\dive \rho}_{e} p \, ds &= \int_{e} \jmp{\dive \curl(\pi_{r-1}^1 v - v)}_{e} p \, ds = 0.
\end{align*}
Similarly, \eqref{L2:e} yields that $\int_{e_F} \jmp{\dive \rho}_{e_F} p \, ds = 0$ for  $p \in \pol_{r-2}(e_F)$.
Next, using \eqref{L2:f}, for any $p \in \mathcal{R}_{r-2}^1(\FCT)$, we have
\begin{align*}
\int_F \rho_F \cdot p \, dA &= \int_F {( \curl(\pi_{r-1}^1 v - v))_F }  \cdot p \, dA = 0
\end{align*}
by \eqref{S1:h}.

By \eqref{L2:g} and for any $p \in \dive \mathring{\LL}_{r-2}^2(\WFT)$,
\begin{align*}
\int_T \dive \rho \, p \, dx &= \int_T \dive\curl(\pi_{r-1}^1 v - v) p \, dx = 0.
\end{align*}
Finally, by \eqref{S1:i}, \eqref{L2:h}, and for any $p \in \curl \mathring{S}_{r-1}^1(\WFT)$,
\begin{align*}
\int_T \rho \cdot p \, dx &= \int_T \curl(\pi_{r-1}^1 v - v) \cdot p \, dx = 0.
\end{align*}
Therefore, $\rho = 0$ by Lemma \ref{L2}, which is the desired result.

(iii) \textit{Proof of \eqref{divcomm2:c}}. Let $w \in [C^\infty(T)]^3$, and set $\rho = \dive \pi_{r-2}^2 w - \pi_{r-3}^3 \dive w\in V_{r-3}^3(\WFT)$.We  show that $\rho$ vanishes on the DOFs \eqref{V3adofs}.

First, we see from \eqref{L2:d} and \eqref{V3a:a} that for any $p \in \pol_{r-3}(e)$, ${e\in \Delta_1^I(\FCT) \backslash\{e_F\}}$ and $F \in \Delta_2(T)$, we have
\begin{align*}
\int_{e} \jmp{\rho}_{e} p \, ds &= \int_{e} \jmp{\dive (\pi_{r-2}^2 w -w)}_{e} p \, ds = 0.
\end{align*}
{Similarly,  $\rho$ vanish on the DOFs \eqref{V3a:b}}.

We then use \eqref{V3a:c}, \eqref{L2:c}, and the Stokes Theorem to see that
\begin{align*}
\int_T \rho \, dx &= \int_T \dive(\pi_{r-2}^2 w -w) \, dx = \int_{\partial T} (\pi_{r-2}^2 w - w) \cdot n_F \, dA = 0,
\end{align*}
\revj{where again we used that $r \ge 3$.}

Lastly, for any $p \in \mathring{\mathcal{V}}_{r-3}^3(\WFT)$,
\begin{align*}
\int_T \rho \, p \, dx &= \int_T (\dive \pi_{r-2}^2 w - w) p \, dx = 0
\end{align*}
by \eqref{L2:g}, \eqref{V3a:d} and  the fact $\dive \mathring{\LL}_{r-2}^2(\WFT) = \mathring{\mathcal{V}}_{r-3}^3(\WFT)$  \revj{which follows} by the exactness of sequence \eqref{WFbdryseq3}.
Therefore $\rho = 0$ by Lemma \ref{V3a}, which is the desired result.
\end{proof}

\subsection{SSSL degrees of freedom}\label{sec:SSSLdofs}

In this section, we  define degrees of freedom for each of the spaces $S_{r-2}^2(\WFT)$ and $\LL_{r-3}^3(\WFT)$ that induce commuting projections corresponding to the local exact sequence \eqref{seq4}.
First the DOFs for $S_{r-2}^2(\WFT)$ are given below.

\begin{lemma}\label{S2}
A function $w \in S_{r-2}^2(\WFT)$, with $r \geq 3$, is fully determined by the following DOFs.
\begin{subequations}\label{S2dofs}
\begin{align}
& && \span \text{No. of DOFs} \nonumber \\
&w(a), && &  12, \label{S2:a} \\
&\dive w(a), && &  4, \label{S2:b} \\
&\int_e w \cdot q \, ds, &&  \forall q \in [\pol_{r-4}(e)]^3, \ \forall e \in \Delta_1(T), &  18(r-3), \label{S2:c} \\
&\int_e (\dive w) q \, ds,  &&  \forall q \in \pol_{r-5}(e), \  \forall e \in \Delta_1(T), &  6(r-4), \label{S2:d} \\
&\int_F (w \cdot n_F) q \, dA, &&  \forall q \in \LL_{r-3}^0(\FCT), \ \forall F \in \Delta_2(T), \span 6(r-2)(r-3)+4, \label{S2:e} \\
&\int_F w_F \cdot q \, dA, && \forall q \in \mathcal{R}_{r-2}^1(\FCT), \ \forall F \in \Delta_2(T), &  12(r-3)^2, \label{S2:f} \\
&\int_F (\dive w)  q \, dA, && \forall q \in {{\LL}_{r-4}^2(\FCT)}, \ \forall F \in \Delta_2(T), &  6(r-3)(r-4) +4, \label{S2:g} \\
&\int_T (\dive w) q \, dx,  &&  \forall q \in \mathring{\LL}_{r-3}^3(\WFT), \span  (r-4)(2r^2-13r+23), \label{S2:i} \\
&\int_T w \cdot q \, dx,  && \forall q \in \curl \mathring{S}_{r-1}^1(\WFT), \span  (4r-11)(r-3)(r-4). \label{S2:j}
\end{align}
\end{subequations}
Then the DOFs \ref{S2dofs} define the projection $\MJN{\varpi_{r-2}^2} : [C^\infty(T)]^3 \rightarrow S_{r-2}^2(\WFT)$.
\end{lemma}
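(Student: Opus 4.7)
The plan is to argue exactly in the spirit of Lemma \ref{L2}, but to exploit the additional smoothness condition $\dive w\in \LL_{r-3}^3(\WFT)$ that distinguishes $S_{r-2}^2(\WFT)$ from $\LL_{r-2}^2(\WFT)$. The first task is to check that the total number of DOFs listed in \eqref{S2dofs} equals $\dim S_{r-2}^2(\WFT)=6(r-2)^3+8(r-2)+2$ from Table \ref{tab:SVDim}; a collection of coefficients in $r$ gives exactly $6r^3-36r^2+80r-62$, matching the dimension. Given this, it suffices to verify the standard vanishing statement: if $w\in S_{r-2}^2(\WFT)$ kills every DOF in \eqref{S2dofs}, then $w\equiv 0$.

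I would carry out the vanishing argument working from $\p T$ inward. First, vertex and edge DOFs \eqref{S2:a}, \eqref{S2:c} force $w|_e=0$ on every $e\in\Delta_1(T)$, and the analogous pair \eqref{S2:b}, \eqref{S2:d} forces $\dive w|_e=0$; here I use crucially that $\dive w$ is globally continuous because $\dive w\in\LL_{r-3}^3(\WFT)$, so pointwise and integral edge DOFs of $\dive w$ are well-defined (no jump DOFs are needed, in contrast to Lemma \ref{L2}). Next, because $w\cdot n_F|_F$ is continuous and vanishes on $\p F$, it lies in $\mathring{\LL}_{r-2}^0(\FCT)$; as noted in the proof of Lemma \ref{L2}, this space has the same dimension as the test space $\LL_{r-3}^0(\FCT)$, so \eqref{S2:e} yields $w\cdot n_F|_F=0$. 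An identical argument applied to $\dive w|_F\in\mathring{\LL}_{r-3}^0(\FCT)$ and the test space $\LL_{r-4}^2(\FCT)=\LL_{r-4}^0(\FCT)$ yields $\dive w|_F=0$ on every face $F\in\Delta_2(T)$ via \eqref{S2:g}.

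To kill the tangential face component $w_F$, I would apply Lemma \ref{lemmaLr0div} exactly as in the proof of Lemma \ref{L2}: the trace $\dive w|_F\in\LL_{r-3}^2(\FCT)$ is automatic from $\dive w\in\LL_{r-3}^3(\WFT)$, and Lemma \ref{lemmaLr0div} upgrades this to the conclusion that $\dive_F w_F$ is continuous on $F$, placing $w_F\in\mathcal{R}_{r-2}^1(\FCT)$. Taking $q=w_F$ in \eqref{S2:f} then gives $w_F=0$. Thus $w|_{\p T}=0$, so by the divergence theorem $\int_T\dive w\,dx=0$, whence $\dive w\in\mathring{\LL}_{r-3}^3(\WFT)$. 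Choosing $q=\dive w$ in \eqref{S2:i} forces $\dive w=0$. Finally, by the exactness of \eqref{WFbdryseq4} from Theorem \ref{bdryseqs}, there exists $p\in\mathring{S}_{r-1}^1(\WFT)$ with $\curl p=w$, and testing \eqref{S2:j} against this $p$ produces $w=0$.

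The main obstacle, and the one that distinguishes this lemma from its less-smooth companion Lemma \ref{L2}, is verifying that $w_F\in\mathcal{R}_{r-2}^1(\FCT)$; the DOFs \eqref{S2:f} are only enough to kill $w_F$ once it is known to lie in this smoother subspace. This is precisely where the built-in continuity of $\dive w$ pays off, through Lemma \ref{lemmaLr0div}. A secondary bookkeeping matter is tracking the dimension coincidences on each face (namely $\dim\mathring{\LL}_{r-2}^0(\FCT)=\dim\LL_{r-3}^0(\FCT)$ and $\dim\mathring{\LL}_{r-3}^0(\FCT)=\dim\LL_{r-4}^0(\FCT)$, both equal to $\tfrac12(3r^2-15r+20)$ and $\tfrac12(3r^2-21r+38)$ respectively) that make the face DOFs \eqref{S2:e} and \eqref{S2:g} individually unisolvent against the appropriate boundary-vanishing face functions.
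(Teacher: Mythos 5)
Your proposal is correct and follows essentially the same route as the paper's proof: dimension count, then the vanishing argument proceeding from edges to faces (using Lemma \ref{lemmaLr0div} to place $w_F$ in $\mathcal{R}_{r-2}^1(\FCT)$) and finally to the interior via the exactness of \eqref{WFbdryseq4}. The only cosmetic differences are the order in which you kill $\dive w|_F$ versus $w_F$ and your direct verification that $\dive w\in\mathring{\LL}_{r-3}^3(\WFT)$ via the divergence theorem rather than via exactness; neither changes the substance.
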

\begin{proof}
The dimension of $S_{r-2}^2(\WFT)$ is $6r^3 - 36r^2 + 80r - 62$, which is equal to the number of DOFs in \eqref{S2dofs}.

Let $w \in S_{r-2}^2(\WFT)$ such that $w$ vanishes on the DOFs \eqref{S2dofs}. Then from \eqref{S2:a} and \eqref{S2:c}, $w|_e = 0$ for each $e \in \Delta_1(T)$, and $\dive w|_e = 0$ by \eqref{S2:b} and \eqref{S2:d}. 

On each $F \in \Delta_2(T)$, $w \cdot n_F|_F \in \mathring{\LL}_{r-2}^0(\FCT)$, hence $w\cdot n_F|_F=0$ by \eqref{S2:e}.
\revj{By Lemma \ref{lemmaLr0div},} we have that $\dive_Fw_F$ is continuous. Hence, 
 $w_F \in \mathcal{R}_{r-2}^1(\FCT)$. Therefore \eqref{S2:f} yields $w_F = 0$. Now we have $\dive w \in {\LL}_{r-3}^2(\FCT)$ {and $\dive w$ vanishes on $\partial F$}. So, $\dive w|_F = 0$ by \eqref{S2:g}, hence $w \in \mathring{S}_{r-2}^2(\WFT)$.

On the macro-element, we have  $\dive w = 0$ by \eqref{S2:i} \MJN{and the exactness
of \eqref{WFbdryseq4}.  Likewise, the exactness of \eqref{WFbdryseq4} yields the existence of a}
$v \in \mathring{S}_{r-1}^1(\WFT)$ such that $\curl v = w$. Hence by \eqref{S2:j}, $w = 0$, which is the desired result.
\end{proof}
%
%
\begin{lemma}\label{L3}
A function $p \in \revj{\LL}_{r-3}^3(\WFT)$, with $r \geq 3$, is fully determined by the following DOFs.
\begin{subequations}\label{L3dofs}
\begin{alignat}{5}
& && \span \text{No. of DOFs} \nonumber \\
&p(a),  && &  4,  \label{L3:a} \\
&\int_e p q \, ds,  && \quad \forall q \in \pol_{r-5}(e),  \forall e \in \Delta_1(T), &  6(r-4), \label{L3:b} \\
%
%
&\int_F pq \, dA, &&  \quad \forall q \in {{\LL}_{r-4}^2(\FCT)}, \ \forall F \in \Delta_2(T), & \qquad 6(r-3)(r-4)+4, \label{L3:c} \\
&\int_T p \, dx,  && & 1, \label{L3:d} \\
&\int_T p q \, dx, && \quad \forall q \in \mathring{\LL}_{r-3}^3(\WFT), \span  (r-4)(2r^2 - 13r+23). \label{L3:e}
\end{alignat}
\end{subequations}
Then the DOFs \eqref{L3dofs} define the projection $\MJN{\varpi_{r-3}^3} : C^\infty(T) \rightarrow \AL{\LL}_{r-3}^3(\WFT)$.
\end{lemma}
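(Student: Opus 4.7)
The plan is to first verify that the total DOF count equals $\dim \LL_{r-3}^3(\WFT) = (2r-5)(r^2-5r+7) = 2r^3 - 15r^2 + 39r - 35$, which is a straightforward expansion of the counts in \eqref{L3dofs}. Then I would establish unisolvence by showing that vanishing of all the DOFs forces $p \equiv 0$. The argument proceeds from the boundary inwards, paralleling the proofs of Lemmas \ref{V3} and \ref{V3a}.

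The vertex and edge steps are standard: \eqref{L3:a} gives $p(a) = 0$ for each $a \in \Delta_0(T)$, and then for each $e \in \Delta_1(T)$, writing $p|_e = b_e \tilde p_e$ with $b_e$ the quadratic edge bubble vanishing at both endpoints of $e$ and $\tilde p_e \in \pol_{r-5}(e)$, testing \eqref{L3:b} with $q = \tilde p_e$ gives $\int_e b_e \tilde p_e^2\,ds = 0$ and hence $p|_e \equiv 0$.

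The crux of the proof is the face step. For each $F \in \Delta_2(T)$, continuity of $p$ on $T$ together with the vanishing of $p$ on $\partial F$ from the previous step gives $p|_F \in \mathring{\LL}_{r-3}^0(\FCT)$. I would introduce a bubble function on the Clough--Tocher split, namely the piecewise linear hat function $b_F$ of $\FCT$ at the split point $m_F$, which is nonnegative on $F$, vanishes on $\partial F$, and equals $1$ at $m_F$. The multiplication map $\tilde p \mapsto b_F\tilde p$ sends $\LL_{r-4}^0(\FCT)$ into $\mathring{\LL}_{r-3}^0(\FCT)$ and is injective since $b_F > 0$ on the interior of $F$. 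Since both spaces have dimension $\tfrac12(3r^2-21r+38)$ by the counts in Table \ref{tab:2DDim}, this multiplication map is in fact an isomorphism. Writing $p|_F = b_F \tilde p$ with $\tilde p \in \LL_{r-4}^0(\FCT) = \LL_{r-4}^2(\FCT)$ and testing \eqref{L3:c} with $q = \tilde p$ yields $\int_F b_F \tilde p^{\,2}\,dA = 0$, and strict positivity of $b_F$ on the interior of $F$ forces $\tilde p \equiv 0$, hence $p|_F \equiv 0$.

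With $p|_{\partial T} = 0$, we have $p \in \mathring{\LL}_{r-3}^0(\WFT)$. The mean-value condition \eqref{L3:d} then places $p$ in $\mathring{\LL}_{r-3}^3(\WFT)$, and testing \eqref{L3:e} with $q = p$ yields $\int_T p^2\,dx = 0$, whence $p \equiv 0$. I expect the bubble isomorphism on the Clough--Tocher split to be the main technical point, as it is the only step genuinely exploiting the CT structure on each face; the remaining arguments are direct adaptations of the patterns established for $V_{r-3}^3(\WFT)$ in Lemma \ref{V3a}.
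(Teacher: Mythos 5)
Your proof is correct and follows essentially the same route as the paper's, which simply asserts that \eqref{L3:a}--\eqref{L3:b} give $p|_e=0$, that \eqref{L3:c} then gives $p|_F=0$, and that \eqref{L3:d}--\eqref{L3:e} finish. The only addition is your explicit justification of the face step via the bubble isomorphism $\tilde p\mapsto b_F\tilde p$ from $\LL_{r-4}^0(\FCT)$ onto $\mathring{\LL}_{r-3}^0(\FCT)$, which is exactly the detail the paper leaves implicit and is verified correctly (including the matching dimension count $\tfrac12(3r^2-21r+38)$).
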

\begin{proof}
The dimension of $\AL{\LL}_{r-3}^3(\WFT)$ is $(2r-5)(r^2-5r+7)$, which matches the number of DOFs in \eqref{L3dofs}.

Let $p \in \LL_{r-3}^3(\WFT)$ such that $p$ vanishes on the DOFs \eqref{L3dofs}. Then by \eqref{L3:a} and \eqref{L3:b}, $p|_e = 0$ for every $e \in \Delta_1(T)$. For each $F \in \Delta_2(T)$, we have that $p|_F \in {\mathring{\LL}_{r-3}^2(\FCT)}$, so $p|_F = 0$ by \eqref{L3:c}. Then by \eqref{L3:d}, we have $p \in \mathring{\LL}_{r-3}^3(\WFT)$, and by \eqref{L3:e}, $p = 0$.
\end{proof}

\subsection{SSSL commuting diagram}

\begin{theorem}\label{thm:sssl}
Recall that $\Pi_r^0 : C^\infty(T) \rightarrow S_r^0(\WFT)$ is the projection defined in Lemma \ref{S0}, $\pi_{r-1}^1 : [C^\infty(T)]^3 \rightarrow S_{r-1}^1(\WFT)$ is the projection defined in Lemma \ref{S1}, $\varpi_{r-2}^2 : [C^\infty(T)]^3 \rightarrow \MJN{S_{r-2}^2(\WFT)}$ is the projection defined in Lemma \ref{S2}, and $\varpi_{r-3}^3 : C^\infty(T) \rightarrow \revj{\LL}_{r-3}^3(\WFT)$ is the projection defined in Lemma \ref{V3a}. 
There holds, \revj{for $r \ge 3$},
%
 \begin{subequations}
 \begin{align}
 \grad \Pi_r^0 q &= \pi_{r-1}^1 \grad q, \quad \forall q \in C^\infty(T), \label{gradcomm3:a}\\
 \curl \pi_{r-1}^1 v &= \varpi_{r-2}^2 \curl v, \quad \forall v \in [C^\infty(T)]^3, \label{curlcomm3:b}\\
 \dive \varpi_{r-2}^2w &= \varpi_{r-3}^3 \dive w, \quad \forall w \in [C^\infty(T)]^3. \label{divcomm3:c}
 \end{align}
 \end{subequations}
\end{theorem}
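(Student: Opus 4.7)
The plan is to follow the same template used in the proofs of Theorems \ref{thm:slvvgen} and \ref{thm:sslv}: for each of the three commuting identities, define the difference $\rho$ as the appropriate element of the target space, and show that all DOFs characterizing that space vanish on $\rho$. Uniqueness of the DOFs then forces $\rho=0$. Note that identity \eqref{gradcomm3:a} is already established in Theorem \ref{thm:sslv}, since $\Pi_r^0$ and $\pi_{r-1}^1$ are unchanged here, so only \eqref{curlcomm3:b} and \eqref{divcomm3:c} require proof.

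For \eqref{curlcomm3:b}, set $\rho = \curl \pi_{r-1}^1 v - \varpi_{r-2}^2 \curl v \in S_{r-2}^2(\WFT)$, and verify that $\rho$ annihilates the DOFs \eqref{S2dofs} of Lemma \ref{S2}. The vertex DOFs \eqref{S2:a} and the edge DOFs \eqref{S2:c} follow from the matching curl-vertex DOF \eqref{S1:b} and curl-edge DOF \eqref{S1:d} of $S_{r-1}^1$ together with the definition of $\varpi_{r-2}^2$ in Lemma \ref{S2}. The divergence-based DOFs \eqref{S2:b}, \eqref{S2:d}, \eqref{S2:g}, and \eqref{S2:i} all vanish automatically since $\dive \curl = 0$. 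The face normal DOF \eqref{S2:e} follows from rewriting $\rho\cdot n_F = \curl_F(\pi_{r-1}^1 v - v)_F$ and invoking Stokes's theorem together with the face-curl DOF \eqref{S1:e} of $S_{r-1}^1$ (the constant-mode is handled by integration by parts using the edge DOFs). The tangential face DOF \eqref{S2:f} follows directly from \eqref{S1:h}. Finally, for the interior DOF \eqref{S2:j}, exactness of \eqref{WFbdryseq4} implies that any $q \in \curl \mathring{S}_{r-1}^1(\WFT)$ is admissible as a test function against $\rho = \curl(\pi_{r-1}^1 v - v)$, which reduces to the interior DOF \eqref{S1:i} of $\pi_{r-1}^1$.

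For \eqref{divcomm3:c}, set $\rho = \dive \varpi_{r-2}^2 w - \varpi_{r-3}^3 \dive w \in \LL_{r-3}^3(\WFT)$ and check the DOFs \eqref{L3dofs}. The vertex DOF \eqref{L3:a} and edge DOF \eqref{L3:b} follow from the corresponding divergence-based DOFs \eqref{S2:b} and \eqref{S2:d} of $\varpi_{r-2}^2$ paired with \eqref{V3a} (here, the DOFs of $\varpi_{r-3}^3 = \pi_{r-3}^3$ in Lemma \ref{V3a} are understood to be the DOFs defined via Lemma \ref{V3a}, but only the mean value and interior moments are required, since the jump DOFs of $\LL_{r-3}^3$ are trivial). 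The face DOF \eqref{L3:c} reduces to the face divergence DOF \eqref{S2:g} of $\varpi_{r-2}^2$. The mean value DOF \eqref{L3:d} is handled by Stokes's theorem combined with \eqref{S2:e} (noting the contribution of the normal trace of $\varpi_{r-2}^2 w - w$ on $\p T$ against the constant $1$). Finally, the interior DOF \eqref{L3:e} follows from the interior divergence DOF \eqref{S2:i} of $\varpi_{r-2}^2$ together with the exactness property $\dive \mathring{S}_{r-2}^2(\WFT) \supseteq \mathring{\LL}_{r-3}^3(\WFT)$ guaranteed by the sequence \eqref{WFbdryseq4}.

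The main obstacle I anticipate is the face-level bookkeeping for identity \eqref{curlcomm3:b}: one must carefully match $\curl_F \rho_F$-type DOFs with the face DOFs of $S_{r-1}^1$, and separately handle the constant mode on each face $F\in\Delta_2(T)$ (since \eqref{S1:e} requires the test function to be mean-zero, while \eqref{S2:e} does not). This is resolved via Stokes's theorem on $F$, which converts the mean of $\curl_F(\pi_{r-1}^1 v - v)_F$ into a boundary integral of the tangential trace along $\p F$, and that integral vanishes thanks to the edge DOFs \eqref{S1:c} of $\pi_{r-1}^1$ (for $r\ge 3$, which ensures $1\in \pol_{r-3}(e)$). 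An analogous subtle step for \eqref{divcomm3:c} occurs for the constant face test function, treated the same way.
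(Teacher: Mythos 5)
Your proposal is correct and follows essentially the same route as the paper: \eqref{gradcomm3:a} is inherited from Theorem \ref{thm:sslv}, and the other two identities are proved by killing the DOFs \eqref{S2dofs} and \eqref{L3dofs} on the respective differences, with $\dive\curl=0$ disposing of all divergence-type DOFs and the constant face mode in \eqref{S2:e} handled via Stokes together with the edge DOFs \eqref{S1:c} for $r\ge 3$, exactly as in the paper. The only blemish is your hedging over which lemma defines $\varpi_{r-3}^3$: the relevant DOFs are those of Lemma \ref{L3} (the theorem statement's citation of Lemma \ref{V3a} is a typo), and that is in fact what you check.
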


\begin{proof}
(i) \textit{Proof of \eqref{gradcomm3:a}.} The identity \eqref{gradcomm3:a} holds by Theorem \ref{thm:sslv}.

(ii) \textit{Proof of \eqref{curlcomm3:b}.} Let $v \in [C^\infty(T)]^3$, and set $\rho = \curl \pi_{r-1}^1 v - \varpi_{r-2}^2 \curl v$. Then $\rho \in S_{r-2}^2(\WFT)$, so we must show that $\rho$ vanishes on the DOFs \eqref{S2dofs}. By \eqref{S1:b} and \eqref{S2:a}, $\rho(a) = \curl \pi_{r-1}^1 v(a) - \curl v(a) = 0$, and by \eqref{S2:b}, $\dive \rho(a) = \dive \curl \pi_{r-1}^1 v(a) - \dive \curl v(a) = 0$ for each $a \in \Delta_0(T)$.

For all $e \in \Delta_1(T)$ and for all $p \in [\pol_{r-4}(e)]^3$,
\begin{align*}
\int_e \rho \cdot p \, ds &= \int_e \curl (\pi_{r-1}^1 v - v) \cdot p \, ds = 0
\end{align*}
by \eqref{S1:d} and \eqref{S2:c}. Using \eqref{S2:d}, for all $p \in \pol_{r-5}(e)$, we have
\begin{align*}
\int_e \dive \rho \, p \, ds &= \int_e \dive \curl (\pi_{r-1}^1 v - v) p \, ds = 0.
\end{align*}

On each face $F \in \Delta_2(T)$, for every $p \in {\LL}_{r-3}^0(\FCT)$, 
\begin{align*}
\int_F (\rho \cdot n_F) p \, dA &= \int_F \curl_F ((\pi_{r-1}^1 v)_F - v_F) p \, dA = 0,
\end{align*}
by \eqref{S1:e}, \revj{\eqref{S1:c}} and \eqref{S2:e}. \revj{Here we used that $r \ge 3$}. 

For each $p \in \mathcal{R}_{r-2}^1(\FCT)$, we have
\begin{align*}
\int_F \rho_F \cdot p \, dA &= \int_F \revj{\big(}\curl (\pi_{r-1}^1 v - v)\revj{\big)}_F \cdot p \, dA = 0,
\end{align*}
where we used \eqref{S1:h} and \eqref{S2:f}. Next, for every $p \in \LL_{r-{4}}^2(\FCT)$, \eqref{S2:g} yields
\begin{align*}
\int_F \dive \rho \, p \, dA &= \int_F \dive \curl (\pi_{r-1}^1 v - v) \, p \, dA = 0.
\end{align*}

\revj{For} each $p \in \mathring{\LL}_{r-3}^3(\WFT)$, we use \eqref{S2:i} so that
\begin{align*}
\int_T \dive \rho \, p \, dx &= \int_T \dive \curl (\pi_{r-1}^1 v - v) p \, dx = 0.
\end{align*}
Finally, for all $p \in \curl \mathring{S}_{r-1}^1(\WFT)$, 
\begin{align*}
\int_T \rho \cdot p \, dx &= \int_T \curl (\pi_{r-1}^1 v - v) \cdot p \, dx = 0,
\end{align*}
by \eqref{S1:i} and \eqref{S2:j}. Therefore, by Lemma \ref{S2}, $\rho = 0$, and the identity \eqref{curlcomm3:b} is proved.

(iii) \textit{Proof of \eqref{divcomm3:c}.} Let $w \in [C^\infty(T)]^3$, and set $\rho = \dive \varpi_{r-2}^2 w - \varpi_{r-3}^3 \dive w$. Then $\rho \in \LL_{r-3}^3(\WFT)$, and we show  $\rho$ vanishes on the DOFs \eqref{L3dofs}.

For all $a \in \Delta_0(T)$, $\rho(a) = \dive\varpi_{r-2}^2 w(a) - \varpi_{r-3}^3 \dive w(a) = 0$ by \eqref{S2:b} and \eqref{L3:a}. On each edge $e \in \Delta_1(T)$ and for all $p \in \pol_{r-5}(e)$,
\begin{align*}
\int_e \rho \, p \, ds &= \int_e \dive(\varpi_{r-2}^2 w - w) p \, ds = 0,
\end{align*}
by \eqref{S2:d} and \eqref{L3:b}.

On each face $F \in \Delta_2(T)$, using \eqref{L3:c}, we have, for all $p \in \LL_{r-{4}}^2(\FCT)$,
\begin{align*}
\int_F \rho \, p \, dA &= \int_F \dive(\varpi_{r-2}^2 w - w) \, p \, ds = 0,
\end{align*}
by \eqref{S2:g}.

Now we use \eqref{L3:d} and  Stokes Theorem to see that 
\begin{align*}
\int_T \rho \, dx &= \int_T \dive(\varpi_{r-2}^2 w - w) \, dx = \int_{\partial T} (\varpi_{r-2}^2 w - w) \cdot n \, dA = 0
\end{align*}
by \eqref{S2:e} \revj{since $r \ge 3$}. Then by \eqref{S2:i} and \eqref{L3:e}, for any $p \in \mathring{\revj{\LL}}_{r-3}^3(\WFT)$, 
\begin{align*}
\int_T \rho \, p \, dx &= \int_T \dive (\varpi_{r-2}^2 w - w) p \, dx = 0.
\end{align*}
Hence $\rho = 0$ by Lemma \ref{L3}, and the identity \eqref{divcomm3:c} is proved.
\end{proof}

\section{Global spaces and commuting diagrams}\label{sec-Global}
In this section, we discuss the global finite element spaces induced by the degrees of freedom of Subsections \ref{sec:SLVVdofs}, \ref{sec:SSLVdofs}, and \ref{sec:SSSLdofs}, thereby extending the results of Section \ref{sec:LocalSeq}. 

Recall $\mathcal{T}_h$ is a triangulation of the polyhedral domain $\Omega \subset \mathbb{R}^3$, and  let $\THWFT$ be the Worsey-Farin refinement of $\mathcal{T}_h$. 
One of the main features of the induced global spaces
is their intrinsic smoothness on Worsey--Farin splits \MJN{(cf.~\cite{FloaterHu20} for related results)}.  To describe this property
in detail, we require some definitions.

\begin{definition}\label{def:Eset}
We define the set $\mathcal{E}(\THWFT)$ as the collection of edges that are internal to a Clough-Tocher split of a face $F \in \MJN{\Delta_2^{\revj{I}}(\mct)}$, \revj{i.e., $\mathcal{E}(\THWFT)=\{ e \in \Delta_1^I(\Fct): \forall F \in \Delta_2^I(\mct) \}$}. 
\end{definition}

\revj{We will use the following notation in this section}. Let $T_1$ and $T_2$ be adjacent tetrahedra in $\mathcal{T}_h$ that share a face $F$. Let $K_1$ and $K_2$ be tetrahedra in $\TA_1$ and $\TA_2$, respectively, such that $K_1$ and $K_2$ share the face $F$. Let $\FCT$ represent the triangulation of $\FCT$ in $\THWFT$, and let $\WFK_i$ be the triangulation of $K_i$ in $\THWFT$, where $1 \leq i \leq 2$. Given a simplex $S \in \Delta_s(\THWFT)$, with $0 \leq s \leq 3$, let $\chi(S)$ represent that characteristic function that equals $1$ on $S$ and $0$ otherwise. Without loss of generality, we choose $n_F = n_1,$ the outward normal to $T_1$ on $F$.

\begin{definition}\label{def:theta}
\revj{Using the above notation, let  $e \subset \Delta_1^I(\FCT)$. Furthemore, let $K_i^{1}, K_i^2 \in \Delta_3(\WFK_i)$, $1 \le i \le 2$  be such that  $e \subset \overline{K_i^j},  1\le i \le 2, 1\le j \le 2 $ and $K_1^2$ shares a face with $K_2^1$.}  \MJN{Then we define}
\begin{align*}
	{\theta_e(p) = |p|_{K_1^1} -p|_{K_1^2} +p|_{K_2^1} -p|_{K_2^2}| \qquad \text{ on } e.}
\end{align*}
\end{definition}
\begin{remark}\label{remark4}
{Note that if $\theta_e(p)=0$ if and only if $\jump{p_1}_e=\jump{p_2}_e$ where $p_i=p|_{T_i}$. }
\end{remark}

\begin{remark}\label{extensions}
The importance of the Worsey--Farin structure is that the natural extension of a piecewise polynomial from $\WFK_1$ to all of $\WFK_1 \cup \WFK_2$ maintains its original smoothness properties across the interior faces of $\WFK_2$, since all the faces of a given subtetrahedron in $\WFK_1$ are coplanar to the faces of the adjacent subtetrahedron of $\WFK_2$. {For example, if  $p \in \pol_r(\WFK_1) \cap C^1(K_1)$ then the natural extension of $p$, which we denote by $q$ satisfies $q|_{K_2}  \in \pol_r(\WFK_2) \cap C^1(K_2)$.} 
\end{remark}

We  show below that the projections defined in Sections \ref{sec:SLVVdofs}, \ref{sec:SSLVdofs}, and \ref{sec:SSSLdofs} induce the following global spaces.
\begin{align*}
	\mathcal{S}_r^0(\THWFT) &= \{ q \in C^1(\Omega) : q|_T \in S_r^0(\WFT) \, \forall T \in \mathcal{T}_h\}, \\
	\mathcal{S}_{r-1}^1(\THWFT) &= \{ v \in [C(\Omega)]^3 : \curl v \in [C(\Omega)]^3, \, v|_T \in S_{r-1}^1(\WFT) \, \forall T \in \mathcal{T}_h\}, \\
	\mathcal{S}_{r-2}^2(\THWFT) &= \{ w \in [C(\Omega)]^3 : \dive w \in C(\Omega), w|_T \in S_{r-2}^2(\WFT) \, \forall T \in \mathcal{T}_h\}, \\
	\mathcal{L}_{r-1}^1(\THWFT) &= \{ v \in [C(\Omega)]^3 : v|_T \in \LL_{r-1}^1(\WFT) \, \forall T \in \mathcal{T}_h\}, \\
	\mathcal{L}_{r-2}^2(\THWFT) &= \{ w \in [C(\Omega)]^3 : w|_T \in \LL_{r-2}^2(\WFT) \, \forall T \in \mathcal{T}_h\}, \\
	\mathscr{V}_{r-2}^2(\THWFT) &= \{w \in H(\dive ; \Omega) : w|_T \in V_{r-2}^2(\WFT) \, \forall T \in \mathcal{T}_h, \\
	& \qquad {\theta_e(w \cdot t) =0} \, \forall e \in \mathcal{E}(\THWFT)\}, \\
	\mathscr{V}_{r-3}^3(\THWFT) &= \{p \in L^2(\Omega) : p|_T \in V_{r-3}^3(\WFT) \, \forall T \in \mathcal{T}_h, \, \theta_e(p) = 0 \, \forall e \in \mathcal{E}(\THWFT)\}, \\
	\revj{\mathcal{L}_{r-3}^3(\THWFT)} &\revj{= \{ p \in C(\Omega):  p|_T \in \LL_{r-3}^3(\WFT) \, \forall T \in \mathcal{T}_h\},}\\
	V_{r-3}^3(\THWFT) &= \pol_{r-3}(\THWFT).
\end{align*}

\begin{remark}\label{rem:thetaprop}
Due to the singular edges formed through a Worsey-Farin refinement of a triangulation, the global space $\mathcal{L}_r^1(\THWFT)$ has the property
\begin{align}\label{eqn:thetacurl}
{\theta_e(\curl w \cdot t) = 0},
\end{align}
Moreover, any function $v \in \mathcal{L}_r^2(\THWFT)$ satisfies
\begin{align}\label{eqn:thetadiv}
\theta_e(\dive v) = 0.
\end{align}
We refer to \cite[Lemmas 5.7.3--5.7.4]{ALphd} for a proof of these results.
\end{remark}

Now we are ready to show that the global analogue of sequence \eqref{seq2} is induced by the local DOFs of Section \ref{sec:SLVVdofs}.
\begin{lemma}\label{globalS0}
The local degrees of freedom stated in Lemmas \ref{S0}, \ref{L1}, \ref{V2},
and \ref{V3} induce the global spaces $\mathcal{S}_r^0(\THWFT)$, $\mathcal{L}_{r-1}^1(\THWFT)$,
 $\mathscr{V}_{r-2}^2(\THWFT)$, and $V_{r-3}^3(\THWFT)$, respectively.

\end{lemma}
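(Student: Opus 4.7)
The plan is to verify each of the four inclusions by showing that the DOFs shared between adjacent macro-elements of $\mct$ across a common face (together with its subsimplices) uniquely determine the appropriate trace data, thereby enforcing the required global smoothness. The skeleton of the argument is the same in each case: the traces of functions in the local space onto a face $F \in \Delta_2(\mct)$ live in a specific 2D space with respect to the induced Clough--Tocher subdivision $\Fct$, and the shared DOFs restrict to a unisolvent set of DOFs for that 2D trace space.

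The case $V_{r-3}^3(\THWFT)$ is immediate since every DOF of Lemma \ref{V3} is interior to $T$, so no continuity constraints are imposed. For $\mathcal{S}_r^0(\THWFT)$, I first observe that $q|_F \in S_r^0(\Fct)$ for any $q \in S_r^0(\WFT)$ (using the coplanarity noted in Remark \ref{extensions}). The shared nodal DOFs $q(a)$, the tangential components of $\grad q(a)$, the edge moments \eqref{S0:c} and the tangential-edge-derivative portions of \eqref{S0:d}, together with the face moments \eqref{S0:e}, form a unisolvent set for $S_r^0(\Fct)$ (essentially mirroring the 2D part of the proof of Lemma \ref{S0}), forcing $q|_F$ to match on both sides. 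A parallel argument using the normal-direction components of $\grad q(a)$, the normal-derivative portion of \eqref{S0:d}, and the face moments \eqref{S0:f} determines $n_F \cdot \grad q|_F \in \LL_{r-1}^0(\Fct)$, which combined with the tangential gradient continuity yields full continuity of $\grad q$ across $F$ and hence $q \in C^1(\Omega)$.

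The cases of $\mathcal{L}_{r-1}^1(\THWFT)$ and $\mathscr{V}_{r-2}^2(\THWFT)$ follow the same pattern. For $\mathcal{L}_{r-1}^1$, I split $v|_F$ into the tangential part $v_F \in \LL_{r-1}^1(\Fct)$, determined by \eqref{L1:a}, the tangential-edge moments of \eqref{L1:b}, and the face DOFs \eqref{L1:f}--\eqref{L1:g}, and the normal part $(v \cdot n_F)|_F \in \LL_{r-1}^0(\Fct)$, determined by vertex values, the $n_F$-component of the edge moments \eqref{L1:b}, the face DOF \eqref{L1:e}, and, after reducing to the case $v_F = 0$ already matched on both sides, the jump DOFs \eqref{L1:c}--\eqref{L1:d} by means of Lemma \ref{curlv-sgrad}. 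For $\mathscr{V}_{r-2}^2$, the face DOF \eqref{V2:c} is unisolvent for $V_{r-2}^2(\Fct)$, directly giving continuity of $w \cdot n_F$ and hence $w \in H(\dive;\Omega)$; the singular-edge condition $\theta_e(w \cdot t) = 0$ on $e \in \mathcal{E}(\THWFT)$ follows from Remark \ref{remark4} since the shared edge DOFs \eqref{V2:a}--\eqref{V2:b} compute $\int_e \jump{w \cdot t}_e q\, ds$ from either side of $F$ and must agree as global single-valued DOFs.

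The main obstacle is the careful bookkeeping that the shared DOFs indeed restrict to a unisolvent set for each 2D trace space, in particular the interplay between $\jump{\curl v \cdot t}$ data and $\grad(v \cdot n_F)$ data through Lemma \ref{curlv-sgrad}, and the identification of which portions of the edge-derivative DOFs \eqref{S0:d} correspond to tangential versus normal-to-$F$ directions. These verifications closely parallel the local unisolvence arguments of Lemmas \ref{S0}--\ref{V3} and ultimately reduce to the 2D exactness results in Theorem \ref{2dseqs}.
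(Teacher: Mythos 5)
Your proposal is correct and follows essentially the same route as the paper: extend one local function across the shared face (Remark \ref{extensions}), and rerun the face-related portion of the local unisolvence arguments of Lemmas \ref{S0}--\ref{V3} to force the relevant trace data (values, normal derivatives, tangential jumps) to agree, with Remark \ref{remark4} converting the matched jumps into the $\theta_e$ conditions. One small imprecision: the normal-derivative trace of a $C^1$ element is not an arbitrary element of $\LL_{r-1}^0(\Fct)$ (the DOFs \eqref{S0:b}, \eqref{S0:d}, \eqref{S0:f} would not be unisolvent for that space); rather, after the tangential trace is matched one writes the difference as $\mu p$ and lands in $\mathcal{R}_{r-1}^0(\Fct)$ --- but this is exactly the step of the local proof of Lemma \ref{S0} that you say you are mirroring, so the argument goes through.
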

\begin{proof}\
\begin{enumerate}[align=left, leftmargin=0pt, labelindent=\parindent]
\item[(i)] Let $q_1 \in S_r^0(\WFT_1)$ and $q_2 \in S_r^0(\WFT_2)$ such that $q_1 - q_2$ vanishes on the DOFs \eqref{S0:a}--\eqref{S0:f} associated with the triangulation $\FCT$. We extend $q_1$ to $K_2$ according to Remark \ref{extensions}, and we set $p = q_1 - q_2$. Then by {the proof of } Lemma \ref{S0}, we have $p = 0$ and $\grad p = 0$ on $F$, therefore the function $q_1 \chi(K_1) + q_2 \chi(K_2)$ is $C^1$ across $F$. Therefore the DOFs \eqref{S0dofs} induce the global space $\mathcal{S}_r^0(\THWFT)$.
%
\item[(ii)] Let $v_1 \in \LL_{r-1}^1(\WFT_1)$ and $v_2 \in \LL_{r-1}^1(\WFT_2)$ such that $v_1 - v_2$ vanishes on the DOFs \eqref{L1:a}--\eqref{L1:g} associated with the triangulation $\FCT$ of the face $F$. We extend $v_1$ to $K_2$ as in Remark \ref{extensions}, and we set $w = v_1 - v_2$. Then by {by the proof of }  Lemma \ref{L1}, $w = 0$ on $F$. 
Therefore the local DOFs \eqref{eqn:L1dofs} induce the global space $\mathcal{L}_{r-1}^1(\THWFT)$.
%
\item[(iii)]Let $w_1 \in \revj{V}_{r-2}^2(\WFT_1)$ and $w_2 \in \revj{V}_{r-2}^2(\WFT_2)$ such that $w_1 - w_2$ vanishes on the DOFs \eqref{V2:a} - \eqref{V2:c} associated with the triangulation $\FCT$ of the face $F$. We extend $w_1$ to $K_2$ as in Remark \ref{extensions} and set $v = w_1 - w_2$. Then\revj{,} by Lemma \ref{V2}, $v \cdot n_F = 0$ on $F$\revj{,  which implies that $w_1 \chi(K_1) + w_2 \chi(K_2)$ is in $H(div)$ across $F$}. Furthermore, DOFs \eqref{V2:a} - \eqref{V2:b} imply that $\jmp{v \cdot t}_e = 0$ for each $e \in \Delta_1^I(\FCT)$, hence $\jmp{w_1 \cdot t}_e=\jmp{w_2 \cdot t}_e  $. \revj{By Remark \ref{remark4}, $\theta_e((w_1 \chi(K_1) + w_2 \chi(K_2)) \cdot t)=0$,} so the local DOFs \eqref{V2dofs} induce the global space $\mathscr{V}_{r-2}^2(\THWFT)$.

\item[(iv)]The DOFs \eqref{V3dofs} simply determine the piecewise polynomials $\pol_{r-3}(\WFT)$. Hence these DOFs naturally induce the global piecewise polynomial space $\pol_{r-3}(\THWFT)$.
\end{enumerate}
\end{proof}

Now we can see that the following sequence forms a complex by Theorem \ref{thm:slvvgen} for $r \geq 3$.
 \begin{alignat*}{5}
&\mathbb{R}\
{\xrightarrow{\hspace*{0.5cm}}}\
\mathcal{S}_{r}^0(\THWFT)\
&&\stackrel{\grad}{\xrightarrow{\hspace*{0.5cm}}}\
\mathcal{L}_{r-1}^1(\THWFT)\
&&\stackrel{\curl}{\xrightarrow{\hspace*{0.5cm}}}\
{\mathscr{V}}_{r-2}^2(\THWFT)\
&&\stackrel{\dive}{\xrightarrow{\hspace*{0.5cm}}}\
{V}_{r-3}^3(\THWFT)\
&&\xrightarrow{\hspace*{0.5cm}}\
 0.
\end{alignat*}
Furthermore, for $0 \leq k \leq 3$ and $r \geq 3$ we have commuting projections $\tilde{\Pi}_{r-k}^k$ such that $\tilde{\Pi}_{r-k}^k v|_T = \Pi_{r-k}^k(v|_T)$ for all $T \in \mathcal{T}_h$. Then by Theorem \ref{thm:slvvgen}, the following diagram commutes.
\begin{center}
\begin{tikzcd}
  \mathbb{R} \arrow[r] 
    & C^\infty(T) \arrow[d, "\tilde{\Pi}^{r}_3"] \arrow [r,"\grad"] & {[C^\infty(T)]}^3 \arrow[d,"\tilde{\Pi}^{1}_{r-1}"] \arrow[r,"\curl"] & {[C^\infty(T)]}^3 \arrow[r,"\dive"] \arrow[d,"\tilde{\Pi}^{2}_{r-2}"]& C^\infty(T) \arrow[r] \arrow[d,"\tilde{\Pi}^{3}_{r-3}"]& 0 \\
  \mathbb{R} \arrow[r] & \mathcal{S}_{r}^0(\THWFT) \arrow[r,"\grad"] & \mathcal{L}_{r-1}^1(\THWFT) \arrow[r,"\curl"] & \mathscr{V}_{r-2}^2(\THWFT) \arrow[r,"\dive"] & {V}_{r-3}^3(\THWFT) \arrow[r] & 0.
 \end{tikzcd}
 \end{center}
Next, we will show that the global analogue of sequence \eqref{seq3} is induced by the local DOFs of Section \ref{sec:SSLVdofs}.
\begin{lemma}\label{globalS1}
The local degrees of freedom stated in Lemmas \ref{S1}, \ref{L2}, and \ref{V3a} induce the global spaces 
$\mathcal{S}_{r-1}^1(\THWFT)$, $\mathcal{L}_{r-2}^2(\THWFT)$, and $\mathscr{V}_{r-3}^3(\THWFT)$, respectively.
\end{lemma}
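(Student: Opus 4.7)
The plan is to adapt the three-step template from the proof of Lemma \ref{globalS0} to each of the three induced spaces. I will fix two adjacent macro-tetrahedra $T_1, T_2 \in \mathcal{T}_h$ sharing a face $F$, take the sub-tetrahedra $K_i \in \TA_i$ of the Alfeld splits whose closures contain $F$, and work on the (shared) Clough--Tocher triangulation $\FCT$ of $F$, which both macro-elements see with the same split point $m_F$ by construction of the Worsey--Farin refinement. Given local pieces $u_1$ on $\WFT_1$ and $u_2$ on $\WFT_2$ whose face-attached DOFs agree, I will extend $u_1$ polynomially from $K_1$ into $K_2$ using Remark \ref{extensions} and then examine the difference $u := u_1 - u_2$ on $\FCT$, essentially replaying the relevant portion of the unisolvence argument from the corresponding local lemma, but restricted to the face $F$.

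For $\mathcal{S}_{r-1}^1(\THWFT)$ the required intrinsic smoothness is continuity of both $v$ and $\curl v$ across $F$. Applied to $v := v_1 - v_2$, the vertex and edge DOFs \eqref{S1:a}, \eqref{S1:c} together with the face DOFs \eqref{S1:e}, \eqref{S1:g} and the exact sequence \eqref{2dbdryseq2} force $v_F = 0$ on $F$; subsequently \eqref{S1:b}, \eqref{S1:d}, \eqref{S1:h}, combined with Lemma \ref{curlv-sgrad} and \eqref{S1:f}, force $v \cdot n_F = 0$ and $(\curl v)|_F = 0$, yielding the required global continuity of both $v$ and $\curl v$. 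For $\mathcal{L}_{r-2}^2(\THWFT)$, applied to $w := w_1 - w_2$, the DOFs \eqref{L2:a}--\eqref{L2:c} give $w \cdot n_F = 0$ on $F$; \eqref{L2:d}--\eqref{L2:e} combined with Lemma \ref{divjump} give ${\rm div}_F\, w_F \in \LL_{r-3}^2(\FCT)$, hence $w_F \in \mathcal{R}_{r-2}^1(\FCT)$; and \eqref{L2:f} concludes with $w_F = 0$, producing continuity of $w$ across $F$.

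For $\mathscr{V}_{r-3}^3(\THWFT)$ the face-attached DOFs \eqref{V3a:a}--\eqref{V3a:b} are intrinsic to the Clough--Tocher structure on $F$; matching these DOFs between $p_1$ and $p_2$ yields $\int_e \jmp{p_1 - p_2}_e q\, ds = 0$ on each interior edge $e$ of $\FCT$, with $q$ running over $\pol_{r-3}(e)$ on the two edges $e \ne e_F$ and over $\pol_{r-4}(e_F)$ on $e_F$. On the first two edges the test space matches the polynomial degree of the jump and directly yields $\jmp{p_1}_e = \jmp{p_2}_e$; on $e_F$ the missing top-order moment is recovered from the jump compatibility at the split point $m_F$ encoded in Lemma \ref{divjump}, exactly as in the local proof of Lemma \ref{V3a}. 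By Remark \ref{remark4} this gives $\theta_e(p_1\chi(T_1) + p_2\chi(T_2)) = 0$ on every $e \in \mathcal{E}(\THWFT)$ lying in $\FCT$, which is precisely the defining property of $\mathscr{V}_{r-3}^3(\THWFT)$.

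The main obstacle will be the pressure-space step: one must reconcile the deliberately reduced test space on $e_F$ (chosen to make the DOF count work inside Lemma \ref{V3a}) with the full jump-matching required to verify $\theta_e = 0$ globally. The resolution rests on the same jump-compatibility at the interior Clough--Tocher vertex $m_F$ that was exploited locally via Lemma \ref{divjump}; once this is in hand, the remaining verifications for $\mathcal{S}_{r-1}^1(\THWFT)$ and $\mathcal{L}_{r-2}^2(\THWFT)$ are essentially routine adaptations of the argument used in the proof of Lemma \ref{globalS0}.
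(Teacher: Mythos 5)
Your proposal is correct and follows essentially the same route as the paper: extend one local piece across the shared face via Remark \ref{extensions}, form the difference, and replay the face-restricted portion of the unisolvence arguments of Lemmas \ref{S1}, \ref{L2}, and \ref{V3a} (with Lemma \ref{divjump} and Remark \ref{remark4} handling the jump/$\theta_e$ condition for the pressure space, exactly as the paper does). The "obstacle" you flag about the reduced test space on $e_F$ is resolved precisely by the continuity at $m_F$ built into Lemma \ref{divjump}, which is also how the paper's proof handles it.
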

\begin{proof}\
\begin{enumerate}[align=left, leftmargin=0pt, labelindent=\parindent]
\item[(i)]Let $v_1 \in S_{r-1}^1(\WFT_1)$ and $v_2 \in S_{r-1}^1(\WFT_2)$ such that $v_1 - v_2$ vanishes on the DOFs \eqref{S1:a} - \eqref{S1:h} associated with the triangulation $\FCT$. We extend $v_1$ to $K_2$ as in Remark \ref{extensions}, and we set $w = v_1 - v_2$. Then by {by the proof of }  Lemma \ref{S1}, $w = 0$ and $\curl w = 0$ on $F$, therefore the DOFs \eqref{S1dofs} induce the global space $\mathcal{S}_{r-1}^1(\THWFT)$.
\item[(ii)]
Let $w_1 \in \revj{\LL}_{r-2}^2(\WFT_1)$ and $w_2 \in \revj{\LL}_{r-2}^2(\WFT_2)$ such that $w_1 - w_2$ vanishes on the 
DOFs \eqref{L2:a} - \eqref{L2:f} associated with the triangulation $\FCT$ of the face $F$. We extend $w_1$ to $K_2$ as in Remark \ref{extensions}, and we set $v = w_1 - w_2$.  {Using the proof of Lemma \ref{L2} we can show that $v=0$ on $F$.  Hence,  $w_1 \chi(K_1) + w_2 \chi(K_2)$ is continuos accross $F$.}

%

\item[(iii)]Let $q_1 \in \revj{V}_{r-3}^3(\WFT_1)$ and $q_2 \in \revj{V}_{r-3}^3(\WFT_2)$ such that $q_1 - q_2$ vanishes on the DOFs \eqref{V3a:a} - \eqref{V3a:b} associated with the triangulation $\FCT$ of the face $F$. We can naturally extend \revj{$q_1$ to $K_2$} as in Remark \ref{extensions}. Let $p = q_1 - q_2$. As in Lemma \ref{V3a}, it follows from DOFs \eqref{V3a:a} - \eqref{V3a:b} that for each $e \in \Delta_1^I(\FCT)$, \revj{$\jmp{p}_e = 0$}, hence $\jmp{q_1}_e = \jmp{q_2}_e$, which implies that $\theta_e(q_1 \chi(K_1) + q_2 \chi(K_2))=0$. 
So, the local DOFs \eqref{V3adofs} induce the global space $\mathscr{V}_{r-3}^3(\THWFT)$.
\end{enumerate}
\end{proof}

Now we can see that the following sequence forms a complex by Theorem \ref{thm:sslv} for $r \geq 3$.
 \begin{alignat*}{5}
&\mathbb{R}\
{\xrightarrow{\hspace*{0.5cm}}}\
\mathcal{S}_{r}^0(\THWFT)\
&&\stackrel{\grad}{\xrightarrow{\hspace*{0.5cm}}}\
\mathcal{S}_{r-1}^1(\THWFT)\
&&\stackrel{\curl}{\xrightarrow{\hspace*{0.5cm}}}\
\mathcal{L}_{r-2}^2(\THWFT)\
&&\stackrel{\dive}{\xrightarrow{\hspace*{0.5cm}}}\
\mathscr{V}_{r-3}^3(\THWFT)\
&&\xrightarrow{\hspace*{0.5cm}}\
 0.
\end{alignat*}
 Furthermore, for $1 \leq k \leq 3$ and $r \geq 3$ we have commuting projections $\tilde{\pi}_{r-k}^k$ such that $\tilde{\pi}_{r-k}^k v|_T = \pi_{r-k}^k(v|_T)$ for all $T \in \mathcal{T}_h$, and by Theorem \ref{thm:sslv}, the following diagram commutes.
 \begin{center}
\begin{tikzcd}
  \mathbb{R} \arrow[r] 
    & C^\infty(T) \arrow[d, "\tilde{\Pi}^{r}_3"] \arrow [r,"\grad"] & {[C^\infty(T)]}^3 \arrow[d,"\tilde{\pi}^{1}_{r-1}"] \arrow[r,"\curl"] & {[C^\infty(T)]}^3 \arrow[r,"\dive"] \arrow[d,"\tilde{\pi}^{2}_{r-2}"]& C^\infty(T) \arrow[r] \arrow[d,"\tilde{\pi}^{3}_{r-3}"]& 0 \\
  \mathbb{R} \arrow[r] & \mathcal{S}_{r}^0(\THWFT) \arrow[r,"\grad"] & \mathcal{S}_{r-1}^1(\THWFT) \arrow[r,"\curl"] & \mathcal{L}_{r-2}^2(\THWFT) \arrow[r,"\dive"] & \mathcal{V}_{r-3}^3(\THWFT) \arrow[r] & 0.
 \end{tikzcd}
 \end{center}
Lastly, we will show that the global analogue of sequence \eqref{seq4} is induced by the local DOFs of Section \ref{sec:SSSLdofs}.
\begin{lemma}\label{globalS2}
The local degrees of freedom stated in Lemmas \ref{S2} and \ref{L3} induce the global spaces $\mathcal{S}_{r-2}^2(\THWFT)$ and $\mathcal{L}_{r-3}^{\revj{3}}(\revj{\THWFT})$, respectively.
\end{lemma}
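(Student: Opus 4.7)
The plan is to follow the same template used in Lemmas \ref{globalS0} and \ref{globalS1}: for each space, consider two macro-tetrahedra $T_1,T_2 \in \mathcal{T}_h$ sharing a face $F$, let $K_i \in \TA_i$ be the sub-tetrahedron containing $F$, assume two local functions agree on the face-associated DOFs, extend one side across $F$ via Remark \ref{extensions}, and then invoke the unisolvency argument from the corresponding local lemma (Lemma \ref{S2} or Lemma \ref{L3}) to conclude that the difference, together with the appropriate differential operator applied to it, vanishes on $F$.

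\begin{enumerate}[align=left, leftmargin=0pt, labelindent=\parindent]
\item[(i)] For $\mathcal{S}_{r-2}^2(\THWFT)$: let $w_1 \in S_{r-2}^2(\WFT_1)$ and $w_2 \in S_{r-2}^2(\WFT_2)$, and suppose $w_1 - w_2$ vanishes on the face-associated DOFs \eqref{S2:a}--\eqref{S2:g}. After extending $w_1$ to $K_2$ as in Remark \ref{extensions}, set $v = w_1 - w_2$ on $K_1 \cup K_2$. The edge/vertex DOFs \eqref{S2:a}--\eqref{S2:d} force $v|_e = 0$ and $\dive v|_e = 0$ on every edge $e \in \Delta_1(F)$; the face DOFs \eqref{S2:e}--\eqref{S2:f} then force $v \cdot n_F$ and $v_F$ to vanish on $F$ by the argument used in the proof of Lemma \ref{S2} (in particular, using Lemma \ref{lemmaLr0div} to get continuity of $\dive_F v_F$ and hence $v_F \in \mathcal{R}_{r-2}^1(\FCT)$); finally, \eqref{S2:g} together with the fact that $\dive v|_{\partial F} = 0$ forces $\dive v = 0$ on $F$. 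Hence $w_1 \chi(K_1) + w_2 \chi(K_2)$ is in $[C(\Omega)]^3$ across $F$ \emph{and} its divergence is continuous across $F$, which is exactly what the definition of $\mathcal{S}_{r-2}^2(\THWFT)$ requires.

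\item[(ii)] For $\mathcal{L}_{r-3}^3(\THWFT)$: let $p_1 \in \LL_{r-3}^3(\WFT_1)$ and $p_2 \in \LL_{r-3}^3(\WFT_2)$, and suppose $p_1 - p_2$ vanishes on the face-associated DOFs \eqref{L3:a}--\eqref{L3:c}. Extend $p_1$ to $K_2$ via Remark \ref{extensions} and set $q = p_1 - p_2$. DOFs \eqref{L3:a}--\eqref{L3:b} force $q|_e = 0$ on every edge of $F$, so $q|_F \in \mathring{\LL}_{r-3}^2(\FCT)$; then \eqref{L3:c} forces $q|_F = 0$, exactly as in the proof of Lemma \ref{L3}. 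Hence $p_1 \chi(K_1) + p_2 \chi(K_2)$ is continuous across $F$, so the DOFs \eqref{L3dofs} induce $\mathcal{L}_{r-3}^3(\THWFT)$.
\end{enumerate}

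No step here is substantially harder than the corresponding step in Lemmas \ref{globalS0}--\ref{globalS1}; the only mildly delicate point is part (i), where one must separately conclude both $v|_F = 0$ \emph{and} $\dive v|_F = 0$, so that the resulting global function lies in $H(\dive;\Omega)$ with continuous divergence (and not merely in $H(\dive;\Omega)$). This is handled by keeping the divergence-related DOFs \eqref{S2:b}, \eqref{S2:d}, \eqref{S2:g} in play alongside the tangential DOFs \eqref{S2:a}, \eqref{S2:c}, \eqref{S2:e}, \eqref{S2:f}, and then reusing the arguments already developed in Lemma \ref{S2}.
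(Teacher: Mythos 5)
Your proposal is correct and follows essentially the same route as the paper: extend one side across the shared face via Remark \ref{extensions}, and use the face-associated portion of the unisolvency arguments of Lemmas \ref{S2} and \ref{L3} to conclude $v|_F=0$, $\dive v|_F=0$ (resp.\ $q|_F=0$). The paper simply states these conclusions by citing the local lemmas, whereas you spell out the intermediate steps (and, in part (ii), correctly restrict to the face-associated DOFs \eqref{L3:a}--\eqref{L3:c}); the content is the same.
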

\begin{proof}\
\begin{enumerate}[align=left, leftmargin=0pt, labelindent=\parindent]
\item[(i)]Let $w_1 \in {S}_{r-2}^2(\WFT_1)$ and $w_2 \in {S}_{r-2}^2(\WFT_2)$ such that $w_1 - w_2$ vanishes on the DOFs \eqref{S2:a} - \eqref{S2:g} associated with the triangulation $\FCT$. We extend $w_1$ to $K_2$ as in Remark \ref{extensions}, and we set $v = w_1 - w_2$. Then by Lemma \ref{S2}, $v = 0$ and $\dive v = 0$ on $F$. Therefore, the local DOFs \eqref{S2dofs} induce the global space $\mathcal{S}_{r-2}^2(\THWFT)$.
\item[(ii)]Let $q_1 \in \revj{\LL}_{r-3}^3(\WFT_1)$ and $q_2 \in \revj{\LL}_{r-3}^3(\WFT_2)$ such that $q_1 - q_2$ vanishes on the DOFs \eqref{L3:a} - \eqref{L3:d} associated with the triangulation $\FCT$ of the face $F$. We extend $q_1$ to $K_2$ as in Remark \ref{extensions}, and we set $p = q_1 - q_2$. It follows from Lemma \ref{L3} that $p = 0$ on $F$, which means $q_1 \chi(K_1) + q_2 \chi(K_2)$ is continuous across $F$. Therefore the local DOFs \eqref{L3dofs} induce the global space $\mathcal{L}_{r-3}^3(\THWFT)$.
\end{enumerate}
\end{proof}

Now we can see that the following sequence forms a complex by Theorem \ref{thm:sssl} for $r \geq 3$.
 \begin{alignat*}{5}
 &\mathbb{R}\
{\xrightarrow{\hspace*{0.5cm}}}\
{S}_{r}^0(\THWFT)\
&&\stackrel{\grad}{\xrightarrow{\hspace*{0.5cm}}}\
{S}_{r-1}^1(\THWFT)\
&&\stackrel{\curl}{\xrightarrow{\hspace*{0.5cm}}}\
{S}_{r-2}^2(\THWFT)\
&&\stackrel{\dive}{\xrightarrow{\hspace*{0.5cm}}}\
\revj{\mathcal{L}}_{r-3}^3(\THWFT)\
&&\xrightarrow{\hspace*{0.5cm}}\
 0.
\end{alignat*}
%
For $2 \leq k \leq 3$ and $r \geq 3$, we have commuting projections $\tilde{\varpi}_{r-k}^k$ such that $\tilde{\varpi}_{r-k}^k v|_T = \varpi_{r-k}^k(v|_T)$ for all $T \in \mathcal{T}_h$, and by Theorem \ref{thm:sssl}, the following diagram commutes.
 \begin{center}
\begin{tikzcd}
  \mathbb{R} \arrow[r] 
    & C^\infty(T) \arrow[d, "\tilde{\Pi}^{r}_3"] \arrow [r,"\grad"] & {[C^\infty(T)]}^3 \arrow[d,"\tilde{\pi}^{1}_{r-1}"] \arrow[r,"\curl"] & {[C^\infty(T)]}^3 \arrow[r,"\dive"] \arrow[d,"\tilde{\varpi}^{2}_{r-2}"]& C^\infty(T) \arrow[r] \arrow[d,"\tilde{\varpi}^{3}_{r-3}"]& 0 \\
  \mathbb{R} \arrow[r] & S_{r}^0(\THWFT) \arrow[r,"\grad"] & S_{r-1}^1(\THWFT) \arrow[r,"\curl"] & S_{r-2}^2(\THWFT) \arrow[r,"\dive"] & \revj{\mathcal{L}}_{r-3}^3(\THWFT) \arrow[r] & 0.
 \end{tikzcd}
 \end{center}
 


\bibliographystyle{abbrv}
\bibliography{refs}

\appendix

\section{Proof of Lemma \ref{curlv-sgrad}}
\begin{proof}
The set  $[t,s,n_F]^\top$ forms an orthonormal basis of $\mathbb{R}^3$, and therefore $v = a_t t + a_s s + a_n n_F$, with $a_t = v \cdot t$, $a_s = v \cdot s$, and $a_n = v \cdot n_F$. Since $v \times n_F = 0$ on $F$, we have $a_t = a_s = 0$ on $F$. Then, on $F$, 
\begin{align}\label{eqn:grada0}
{\rm grad}_F\, a_t = {\rm grad}_F\,a_s = 0.
\end{align}
We also have $\curl v = \grad a_t \times t + \grad a_s \times s + \grad a_n \times n_F$, and so
\begin{align}\label{curlvdott}
\curl v \cdot t &= (\grad a_s \times s + \grad a_n \times n_F) \cdot t.
\end{align}
Writing
$\grad a_s = (t \cdot \grad a_s) t + (s \cdot \grad a_s) s + (n_F \cdot \grad a_{\revj{s}}) n_F$, we find
\begin{align}\label{gradas}
(\grad a_s \times s) \cdot t = (n_F \cdot \grad a_s)(n_F \times s) \cdot t,
\end{align}
since $(t \times s) \cdot t = 0$ and $(s \times s) \cdot t = 0$.

Let $f$ be the interior face of $\WFT$ that contains $e$, and let $r$ be the unit vector tangent to $f$ and orthogonal to $t$. Then $r$ may be written $r = (r \cdot s) s + (r \cdot n_F) n_F$, therefore
\begin{align*}
n_F &= \frac{r - (r \cdot s) s}{r \cdot n_F}.
\end{align*}
Then by \eqref{eqn:grada0}, on $F$ we have
\begin{align}\label{ndotgradas}
n_F \cdot \grad a_s &= \frac{1}{r \cdot n_F} (r - (r \cdot s)s) \cdot \grad a_s
= \frac{1}{r \cdot n_F}(r \cdot \grad a_s).
\end{align}
Since $r$ is tangent to $f$ and $a_s$ is continuous, we have $\jmp{r \cdot \grad a_s}_e = 0$, which yields 
{$\jmp{n_F \cdot \grad a_s}_e = 0$ which in turn implies $\jmp{(\grad a_s \times s) \cdot t}_e = 0$ by \eqref{gradas}. It follows that  $\jmp{\curl v \cdot t}_e = \jmp{(\grad a_n \times n_F) \cdot t}_e$.}

We expand $\grad a_n$ in terms of $[t,s,n_F]^\top$ as
\begin{align*}
\grad a_n &= (t \cdot \grad a_n \revj{)} t + (s \cdot \grad a_n) s + (n_F \cdot \grad a_n) n_F.
\end{align*}
So $(\grad a_n \times n_F) \cdot t = (s \cdot \grad a_n (s \times n_F)) \cdot t$.
Because $(s \times n_F) \cdot t = 1$, it follows that $(\grad a_n \times n_F) \cdot t = s \cdot \grad a_n$. Therefore $\jmp{\curl v \cdot t}_e = \jmp{s \cdot \grad a_n }_e = \jmp{s \cdot \grad(v \cdot n_F)}_e,$ which is the desired result.
\end{proof}


\section{Miscellaneous Results}

\begin{lemma}\label{lemmaVr0div}
For any $g \in \mathring{V}_r^2(\Twf)$ we have that $g_F \in H({\rm div}_F; F)$ for $F \in \Delta_2(T)$. 
\end{lemma}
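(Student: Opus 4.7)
The plan is to show that the claim reduces to checking that the in-plane normal component of $g_F$ has no jump across each interior edge of the induced Clough--Tocher split $\Fct$ of $F$. First I would note that since $g \in \mathring{V}_r^2(\Twf)$ satisfies $g \cdot n_F = 0$ on $F$, the trace $g|_F$ coincides with the tangential part $g_F$, so it suffices to verify $g|_F \in H({\rm div}_F; F)$ as a piecewise polynomial with respect to $\Fct$.

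Next, I would fix an arbitrary interior edge $e \in \Delta_1^I(\Fct)$. By the Worsey--Farin construction $e$ has the form $e = [m_F, y]$ for some vertex $y$ of $F$, and $e$ is contained in an interior face $f = [z_T, m_F, y]$ of $\Twf$ which separates the two sub-tetrahedra $S_1, S_2 \in \Twf$ whose restrictions to $F$ are the two triangles of $\Fct$ meeting at $e$. Let $t$ be a unit tangent to $e$, let $\nu_e := n_F \times t$ be the in-plane unit normal to $e$ within $F$, and let $n_f$ be a unit normal to $f$.

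The main geometric step is to observe that $\nu_e$, $n_F$, and $n_f$ are all perpendicular to $t$ (since $t$ lies in both $F$ and $f$), and hence lie in a common two-dimensional plane. Because $z_T \notin F$, the planes of $F$ and $f$ are distinct, so $n_F$ and $n_f$ are linearly independent and span this plane. Consequently, there exist scalars $\alpha,\beta$ with
\[
\nu_e = \alpha\, n_F + \beta\, n_f.
\]
I would then combine two facts on $e$: (i) $g|_{S_i} \cdot n_F = 0$ for $i=1,2$, coming from the boundary condition $g \cdot n_F = 0$ on $F \supset e$; and (ii) $g|_{S_1} \cdot n_f = g|_{S_2} \cdot n_f$ on $f \supset e$, coming from the $H(\dive;T)$-conformity of $g$ across the interior face $f$. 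Pairing these with the decomposition of $\nu_e$ gives $g|_{S_1} \cdot \nu_e = g|_{S_2} \cdot \nu_e$ on $e$, i.e., $\jmp{g_F \cdot \nu_e}_e = 0$, which is exactly the condition needed for $g_F \in H({\rm div}_F; F)$.

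The step I expect to be the main obstacle is the geometric decomposition $\nu_e = \alpha n_F + \beta n_f$, which is what allows the two different pieces of normal-trace information (the homogeneous boundary condition in the direction $n_F$ and the $H(\dive)$-continuity in the direction $n_f$) to be fused into continuity in the in-plane normal direction $\nu_e$. This is precisely where the Worsey--Farin structure—specifically that every interior edge of $\Fct$ lies on an interior face of $\Twf$—enters essentially.
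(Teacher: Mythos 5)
Your proposal is correct and takes essentially the same route as the paper: both arguments work on an interior edge $e$ of $\Fct$, use that $t^\perp$ is a two-dimensional plane containing $n_F$, $n_f$, and the in-plane normal $\sss = n_F\times t$, and fuse the boundary condition $g\cdot n_F=0$ with the $H(\dive)$-continuity of $g\cdot n_f$ across the interior face $f$ to conclude that $g_F\cdot\sss$ is single-valued. The only cosmetic difference is that the paper expands $n_f$ in the basis $\{\sss, n_F\}$ and solves for $g\cdot\sss$, whereas you expand $\sss$ in terms of $\{n_F,n_f\}$; the two require the same nondegeneracy (the planes of $F$ and $f$ are distinct).
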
 

\begin{proof}
\revj{ Let $e \in \Delta_1^I( \Fct)$, and let  $f$ be the corresponding an internal face of $\Twf$ that has $e$ as an edge. We let $t$ be a unit vector parallel to $e$ and set $\sss=t \times n_F$. Note that $\{n_F, \sss, t\}$ forms an orthonormal basis of $\bbR^3$. 
To prove $g_F \in H(\divF; F)$, it suffices to show $g_F \cdot \sss$ is single-valued on $e$. 

Let $n_f$ be a unit-normal to $f$. Since $n_f \cdot t=0$, we have that $n_f= (n_f \cdot \sss) \revj{\sss} + (n_f \cdot n_F) n_F$ and thus, $g \cdot n_f= g \cdot \sss (n_f  \cdot s)+ g \cdot n_F (n_f \cdot n_F)$ on $e$. However, $g \cdot n_F=0$ on $F$ by definition of $\mathring{V}_r^2(\Twf)$,
 and so  $g \cdot n_f= g \cdot \revj{\sss} (n_f  \cdot \revj{\sss})$ on $e$. Since $g \cdot n_f $ is single valued on $e$ (since $e \subset \partial f$ and $g \in  V_r^2(\Twf)$) we have that  $g \cdot \sss $ is single valued on  $e$. Finally, since $g_F \cdot \sss=g \cdot \sss$ we conclude    $g_F \in H({\rm div}_F; F)$.}
\end{proof}

\begin{lemma}\label{lemmaLr0div}
\revj{For any $g \in \LL_r^2(\Twf) \cap  \mathring{V}_r^2(\Twf)$ with $\dive g|_F$ continuous on $F$ we have that ${\rm div}_F g_F$ is continuous on $F$,  for $F \in \Delta_2(T)$. }
\end{lemma}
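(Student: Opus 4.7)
The plan is to establish the claim edgewise on the Clough--Tocher triangulation of $F$. Specifically, fix an internal edge $e \in \Delta_1^I(\Fct)$ shared by two triangles $Q_1, Q_2 \in \Fct$, let $K_i \in \Twf$ denote the subtetrahedron with face $Q_i \subset F$, and let $f \in \Delta_2^I(\Twf)$ be the interior Worsey--Farin face satisfying $e \subset f \subset \overline{K_1} \cap \overline{K_2}$. I will show the two traces of ${\rm div}_F g_F$ agree on $e$; since this covers every internal edge of $\Fct$, continuity on all of $F$ will follow.

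The first step is to work in an orthonormal frame $\{t, \sss, n_F\}$ adapted to $e$, with $t$ parallel to $e$ and $\sss = n_F \times t$, and to decompose $g = g^t t + g^\sss \sss + g^n n_F$ with $g^n := g \cdot n_F$. A direct Cartesian computation gives, at each point of $F$,
\begin{equation*}
\dive g \big|_F \;=\; {\rm div}_F g_F + (\p_{n_F} g^n)\big|_F.
\end{equation*}
Since $\dive g|_F$ is continuous across $e$ by hypothesis, establishing continuity of ${\rm div}_F g_F$ across $e$ reduces to showing that $(\p_{n_F} g^n)|_{K_i}\big|_e$ does not depend on $i \in \{1,2\}$.

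Because $g \in \mathring{V}_r^2(\Twf)$, one has $g^n|_F = 0$. I would then introduce the linear function $\lambda$ on $T$ normalized by $\lambda|_F = 0$ and $\p_{n_F}\lambda \equiv 1$, and factor $g^n|_{K_i} = \lambda\, h_i$ for a polynomial $h_i$ on $K_i$. Differentiating gives $(\p_{n_F} g^n)|_{K_i} = h_i + \lambda\,\p_{n_F} h_i$, and restricting to $F$ where $\lambda = 0$ reduces the whole problem to the algebraic identity $h_1|_e = h_2|_e$.

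The main obstacle, and the step where the Worsey--Farin geometry plays an essential role, is this last identity. Continuity of $g$ across $f$ (which holds because $g \in \LL_r^2(\Twf)$) yields $(\lambda h_1)|_f = (\lambda h_2)|_f$. The key geometric observation is that $\lambda|_f$ is a \emph{nonzero} linear function on the two-dimensional face $f$ vanishing precisely along $e = f \cap F$; this uses the fact that interior faces of the Worsey--Farin split meet $F$ transversally along a single line, so $f$ is not contained in the plane of $F$. Hence $(h_1 - h_2)|_f$ vanishes off the codimension-one set $e \subset f$, and since $h_1, h_2$ are polynomials, $h_1|_f \equiv h_2|_f$. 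Taking the trace along $e$ yields $h_1|_e = h_2|_e$, which combined with the reduction above completes the argument.
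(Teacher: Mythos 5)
Your argument is correct and is essentially the paper's proof: both rest on the identity ${\rm div}_F\, g_F = \dive g|_F - n_F\cdot\grad(g\cdot n_F)$ together with factoring $g\cdot n_F$ by a linear function vanishing on $F$ and using continuity of $g$ to conclude that the quotient (hence the normal derivative of $g\cdot n_F$ on $F$) is single-valued. The only difference is presentational: the paper divides by the Alfeld hat function $\mu$ once on the whole subtetrahedron $K\in\Ta$ and argues the quotient $\psi$ is continuous there, whereas you normalize the linear factor and verify the matching of the quotients edge by edge across each interior Worsey--Farin face $f$ with $f\cap F=e$.
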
 

\begin{proof}
\revj{Let  $K \in \Ta$ with $F \in \Delta_2(K) $. Since  $g \cdot n_F=0$ on $F$, we can write $g \cdot n_F{|_K}= \mu \psi$  {on $K$} for some $\psi \in \pol_{r-1}({\Kwf})$. However, since $g \cdot n_F$  is continuous on $K$  and $\mu$ is linear and positive on  $K$, it must be that $\psi$ is continuous on $K$. Since $n_F \cdot \grad (g \cdot n_F)= \psi \grad \mu \cdot n_F$ on $F$ this implies that $n_F \cdot \grad (g \cdot n_F)$ is continuous on $F$.
We can write ${\rm div}_F\, g_F= \dive g|_F-n_F \cdot \grad(g \cdot n_F)$ {on $F$} and, hence, ${\rm div}_F\, g_F$ is continuous on $F$.
}

\end{proof}

\MJN{\begin{lemma}\label{divjump}
Let $p \in V_r^3(\WFT)$ and $r \geq 0$. For $F \in \Delta_2(\WFT)$, if
\begin{subequations}
\begin{alignat}{2}
\int_{e} \jmp{p}_e q \, ds &= 0 \qquad \forall q \in \pol_{r}(e) \qquad &&e \in {\Delta_1^I(\FCT)\backslash\{e_F\}}, \quad \text{and} \label{eqn:ebar}\\
\int_{e_F} \jmp{p}_{e_F} \, ds &= 0 \qquad \forall q \in \pol_{r-1}(e_F) \qquad &&e_F \in \Delta_1^I(\FCT), \label{eqn:ehat}
\end{alignat}
\end{subequations}
then $p|_F$ is continuous.
\end{lemma}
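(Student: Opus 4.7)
The plan is to exploit the two orthogonality conditions in tandem, using the crucial geometric feature that all three internal edges of the Clough--Tocher split $\FCT$ meet at the common split vertex $m_F$.

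First, I would dispose of the two non-distinguished edges. Let $e\in \Delta_1^I(\FCT)\setminus\{e_F\}$. The scalar component $p|_{Q_1}-p|_{Q_2}$ of the jump $\jmp{p}_e$ restricted to $e$ is a polynomial of degree at most $r$ on $e$, and hypothesis \eqref{eqn:ebar} makes it $L^2(e)$-orthogonal to all of $\pol_r(e)$. Testing with itself immediately gives $\jmp{p}_e=0$, so $p|_F$ is continuous across the two edges in $\Delta_1^I(\FCT)\setminus\{e_F\}$.

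Second, I would use the geometry of $\FCT$ to upgrade continuity across those two edges into a single pointwise constraint at $m_F$ on the third edge. The three triangles of $\FCT$ all meet at $m_F$, and each pair of adjacent triangles is separated by exactly one of the three internal edges; moreover, $m_F$ is a common endpoint of all three. Chaining continuity across the two non-distinguished edges at $m_F$ shows $p$ is single-valued at $m_F$, hence $\jmp{p}_{e_F}(m_F)=0$.

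Third, I would close the remaining one-dimensional gap on $e_F$ by a sign-definite factorization. Let $\lambda$ denote the linear function on $e_F$ vanishing at $m_F$ and positive on $e_F\setminus\{m_F\}$. Since $\jmp{p}_{e_F}\in \pol_r(e_F)$ vanishes at the endpoint $m_F$, it factors as $\jmp{p}_{e_F}=\lambda q$ with $q\in \pol_{r-1}(e_F)$. Plugging this $q$ into \eqref{eqn:ehat} yields
\[
0 = \int_{e_F}\jmp{p}_{e_F}\,q\,ds = \int_{e_F}\lambda\,q^2\,ds,
\]
and the strict positivity of $\lambda$ on the interior of $e_F$ forces $q\equiv 0$, hence $\jmp{p}_{e_F}\equiv 0$. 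The degenerate case $r=0$ is even easier: $\jmp{p}_{e_F}$ is a constant that vanishes at $m_F$ by the second step, so it vanishes identically.

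The main subtlety is the second step. Testing on $e_F$ is intentionally one polynomial degree short of detecting the full jump, so neither condition alone suffices; the argument only closes because the Clough--Tocher geometry concentrates all three internal edges at the single vertex $m_F$, turning continuity on the other two edges into precisely the extra pointwise datum needed to beat the one-dimensional deficiency on $e_F$.
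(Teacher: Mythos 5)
Your proof is correct and follows essentially the same route as the paper's: kill the jump on the two non-distinguished edges by full-degree orthogonality, chain continuity at the split point $m_F$ to get $\jmp{p}_{e_F}(m_F)=0$, and then use the factorization $\jmp{p}_{e_F}=\lambda q$ together with \eqref{eqn:ehat} to finish. Your write-up merely makes explicit the sign-definite argument $\int_{e_F}\lambda q^2\,ds=0$ and the $r=0$ edge case, which the paper leaves implicit.
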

\begin{proof}
We label the three triangles in $\Delta_2(\FCT)$ as $Q_1, Q_2$, and $Q_3$ such that $e_F = Q_1 \cap Q_2$. We let $p_i=p|_{Q_i}$ and let $z \in \Delta_0^I(\Fct)$. Since $p \in V_r^3(\WFT)$, condition \eqref{eqn:ebar} yields that $\jmp{p}_{e} = 0$ for both interior edges $e \in \Delta_1^I(\FCT)\backslash\{e_F\}$. It follows that $p_1(z) = p_2(z)$ and $p_2(z) = p_3(z)$, therefore $p$ is continuous at $z$. Hence $\jmp{p}_{e_F}(z) = 0$. Then, \eqref{eqn:ehat} shows that  $\jmp{p}_{e_F}=0$.
\end{proof}
}

\end{document}